\newtheorem{theorem}{Theorem}
\newtheorem{lemma}[theorem]{Lemma}
\newtheorem{conjecture}[theorem]{Conjecture}
\newtheorem{claim}[theorem]{Claim}
\theoremstyle{definition}
\newtheorem{definition}{Definition}
\title{Edge isoperimetry of lattices}
\author{Cameron Strachan \and Konrad Swanepoel}
\date{}
\begin{document}
\maketitle
\begin{abstract}
    We present two results related to an edge-isoperimetric question for Cayley graphs on the integer lattice asked by Ben Barber and Joshua Erde [Isoperimetry of Integer Lattices, Discrete Analysis \textbf{7} (2018)].
    For any (undirected) graph $G$, the edge boundary of a subset of vertices $S$ is the number of edges between $S$ and its complement in $G$.
    Barber and Erde asked whether for any Cayley graph on $\mathbb{Z}^d$, there is always an ordering of $\mathbb{Z}^d$ such that for each $n$, the first $n$ terms minimize the edge boundary among all subsets of size $n$.

    First, we present an example of a Cayley graph $G_d$ on $\mathbb{Z}^d$ (for all $d\geq 2$) for which there is no such ordering. Furthermore, we show that for all $n$ and any optimal $n$-vertex subset $S_n$ of $G_d$, there is no infinite sequence $S_n\subset S_{n+1}\subset S_{n+2}\subset\cdots$ of optimal sets $S_i$, where $|S_i|=i$ for $i\geq n$.
    This is to be contrasted with the positive result in $\mathbb{Z}^1$ shown by Joseph Briggs and Chris Wells [arXiv:2402.14087].
    
    Our second result is a positive example for the unit-length triangular lattice (which is isomorphic to $\mathbb{Z}^2$) where two vertices are connected by an edge if their distance is $1$ or $\sqrt{3}$. We show that this graph has such an ordering.
    This is the most complicated example known to us of a two-dimensional Cayley graph for which an ordering exists.
\end{abstract}

\section{Introduction}

\begin{definition}\label{def:edgeboundary}
    Given a graph \(G\), the \emph{edge boundary} of \(S\subseteq V(G)\) is
    \[
    \partial(S):=|\{ uv\in E(G): u\in S, v\notin S\}|.
    \]

\end{definition}

The \emph{edge isoperimetric problem} (EIP) of a graph \(G\) is, for a given \(n\), to minimize \(\partial(S)\) over all \(S\subseteq V(G)\) where \(|S|=n\). We call such minimizing sets \emph{solutions to the EIP of }\(G\). This classical problem has been extensively studied since the 1960s (see \cite{Harper2004}). Although it is NP-hard in general, some special cases are known. One aspect that has received particular attention is whether nested solutions exist. A \emph{nested solution} for the EIP of $G$ is an ordering \(v_1,v_2,\dots \) of the vertex set $V(G)$ such that for each \(n\), the set \(\{v_1,v_2,\dots, v_n\}\) is a solution to the EIP of $G$.

One of the first cases of the EIP that has been solved is the $d$-dimensional cube graph, which has nested solutions, and where the optimal shapes include subcubes \cites{Harper1964, Lindsey1964, Bernstein1967, Hart1976}.

For \(p=1,\infty\), denote by \((\mathbb{Z}^d, \ell_p) \) the graph with vertex set \(\mathbb{Z}^d\) where pairs of vertices have an edge if their \(\ell_p\) distance is \(1\). Bollob\'as and Leader \cite{BL91} solved the EIP for \((\mathbb{Z}^d, \ell_1)\), and proved that the solutions include cubes. Moreover, they showed that this graph has nested solutions.

Bollob\'as and Leader \cite{BL91} also considered the EIP on finite grids \(\{1,2,\dots, k\}^d\), considered as induced subgraphs of \((\mathbb{Z}^d, \ell_1)\). It turned out that there are two types of solutions: cubes if $n$ is small relative to the size of the grid and half-grids for large $n$. Furthermore, they showed the transition between these two types of solutions is not smooth, giving the first example of a graph without nested solutions. 

If \(G\) is an undirected \(k\)-regular graph, for any \(S\subseteq V(G)\) we have \(|E(G[S])|=\frac{k|S|-\partial(S)}{2}\). If \(G\) is a directed \(k\)-regular graph, then for any \(S\subseteq V(G)\) we have \(|E(G[S])|=k|S|-\partial(S)\). Thus, for regular graphs the problem of minimizing \(\partial(S)\) over all subsets with size \(n\) is the same as maximizing \(|E(G[S])|\) over all subsets of size \(n\).

In terms of this formulation, Brass \cite{Brass96} solved the EIP of \((\mathbb{Z}^2,\ell_\infty)\), where the optimal shapes include certain octagons. Additionally, he showed \((\mathbb{Z}^2,\ell_\infty)\) has nested solutions. For \(d\geq 3\), the EIP of \((\mathbb{Z}^d,\ell_\infty)\) remains open. 

Let \(g_1=(1,0)\) and \(g_2=(1/2,\sqrt{3}/2)\). The \emph{triangular lattice} is the set
\[
\Lambda:=\{ mg_1+ng_2: m,n\in \mathbb{Z}\}.
\]
We can turn $\Lambda$ into a graph by joining a pair of vertices if their Euclidean distance is $1$.
For this graph, the EIP is solved \cite{Harborth1974} (see also \cites{HH1976, Harper2001}) with solutions that include regular hexagons. Again, the graph has nested solutions.
This graph is isomorphic to $\mathbb{Z}^2$, where two vertices are joined if their difference is in $\{(\pm1,0),(0,\pm1), \pm(1,1)\}$, hence it can be thought of as a graph between \((\mathbb{Z}^2,\ell_1)\) and \((\mathbb{Z}^2,\ell_\infty)\).

The above examples are all special cases of Cayley graphs on the group $\mathbb{Z}^d$.

\begin{definition}\label{def:Cayley}
    Let \(U\) be a finite set that generates \(\mathbb{Z}^d\) as a group and does not contain the identity. The (directed) Cayley graph $\mathbb{Z}^d_U$ is the graph on  the vertex set \(\mathbb{Z}^d\) where $(u,v)$ is an edge whenever $v-u\in U$. When \(U\) is symmetric (that is, $-u\in U$ for all $u\in U$), we consider \(\mathbb{Z}^d_U\) to be undirected.
\end{definition}

Given a generating set $U$ of \(\mathbb{Z}^d\), let \(Z\subseteq \mathbb{R}^d\) be the zonotope $\sum_{u\in U}[0,u]$ generated by the line segments \([0,u]\), \(u\in U\). Barber and Erde \cite{BE2018} showed that the edge boundary of \(tZ\cap \mathbb{Z}^d\) for large \(t\), asymptotically approximates the edge boundary of solutions to the EIP of \(\mathbb{Z}^d_U\). Barber, Erde, Keevash and Roberts \cite{BEKR2023} showed that additionally, \(tZ\cap \mathbb{Z}^d\) asymptotically approximates the shape of solutions to the EIP of \(\mathbb{Z}^d_U\).

Barber and Erde \cite{BE2018} asked if every Cayley graph of \(\mathbb{Z}^d\) has nested solutions. Despite the positive examples already given, Briggs and Wells \cite{BW2024} gave counterexamples for the case $d=1$. On the other hand, they also gave a partial positive answer: for any Cayley graph of \(\mathbb{Z}\), there exists an \(m\in \mathbb{N}\) and an ordering \(v_1,v_2\dots\) of \(\mathbb{Z}\) such that for any \(n\geq m \), the set \(\{v_1,v_2,\dots, v_n\}\) is a solution to the EIP. In other words, they showed that any Cayley graph on \(\mathbb{Z}\) has nested solutions starting at a sufficiently large size.

We give a negative answer to the question of Barber and Erde for all \(d\geq 2\) by giving an explicit example of a Cayley graph without nested solutions. Furthermore, we show this example does not have nested solutions regardless of any starting point. 
Thus, in dimensions $2$ and higher there are stronger counterexamples than in $\mathbb{Z}^1$.

\begin{theorem}\label{the:}

The EIP for $\mathbb{Z}^d_U$, where $U$ is the generating set $\{\pm e_i : i=1,\dots,d\}\cup\{\pm 2e_1\}$  of $\mathbb{Z}^d$, does not have nested solutions starting at any size.
In other words, for all $n$ and each $n$-element subset $S_n$ of $\mathbb{Z}^d$ for which $\partial(S_n)$ is the minimum among all $n$-element subsets of $\mathbb{Z}^d_U$, there does not exist a sequence $S_n\subset S_{n+1}\subset S_{n+2}\subset\cdots$ of $i$-element subsets $S_i$ of $\mathbb{Z}^d$, such that for each $i\geq n$, $\partial(S_i)$ is the minimum among all $i$-element subsets of $\mathbb{Z}^d_U$.
\end{theorem}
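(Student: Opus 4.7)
The plan is to show that in any infinite nested chain $S_{n_0} \subset S_{n_0+1} \subset \cdots$ of optimal sets, the extent $\max_{w \in S_n} w_i - \min_{w \in S_n} w_i + 1$ in each coordinate direction $e_i$ with $i \geq 2$ is bounded by its value at $S_{n_0}$. Since \cite{BEKR2023} shows that optimal $n$-sets have $e_i$-extent $\Theta(n^{1/d})$ in each non-$e_1$ direction, this gives a contradiction for $n$ sufficiently large, ruling out any such infinite chain.

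The first step is to establish $f(n+1) - f(n) \leq 2d - 2$ for all $n \geq 2$, where $f(n)$ denotes the minimum edge boundary among $n$-vertex subsets of $\mathbb{Z}^d_U$. For $n \geq 3$, every optimal $S_n$ contains a pair $\{u, u+e_1\}$: otherwise every vertex has both its $\pm e_1$-edges going outside $S_n$, so $\partial(S_n) \geq 2dn + 2 > f(n)$ for such $n$. Given such a pair, $v := u - e_1$ is adjacent to $u$ (via $+e_1$) and to $u + e_1$ (via $+2e_1$), so $\partial(S_n \cup \{v\}) \leq f(n) + (2d+2) - 4 = f(n) + 2d - 2$. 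Consequently, in any nested optimal chain, each step adds a vertex $v_{n+1}$ with $|N(v_{n+1}) \cap S_n| \geq 2$.

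The second step shows the chain's $e_i$-extent (for each $i \geq 2$) is preserved. Suppose $v_{n+1}$'s $e_i$-coordinate lay strictly outside $[\min_i(S_n), \max_i(S_n)]$. Then every neighbor of $v_{n+1}$ of the form $v_{n+1} \pm e_j$ for $j \neq i$ or $v_{n+1} \pm 2e_1$ shares its $e_i$-coordinate with $v_{n+1}$ and hence lies outside $S_n$; only the single vertex $v_{n+1} \mp e_i$ can possibly lie in $S_n$, giving at most one neighbor, contradicting $|N(v_{n+1}) \cap S_n| \geq 2$. Hence the chain's $e_i$-extent is bounded by that of $S_{n_0}$ throughout.

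Finally, by \cite{BEKR2023} the optimal $S_n$ approximate the dilated zonotope $tZ$ with $Z = [-3,3] \times [-1,1]^{d-1}$ and $t = \Theta(n^{1/d})$, so their $e_i$-extent is $\Theta(n^{1/d})$ for each $i \geq 2$, contradicting the chain's bounded extent for $n$ sufficiently large. The main obstacle is verifying the combinatorial claim in the first step for small $n$, and handling degenerate starting configurations like $S_2 = \{0, e_2\}$ where no outside vertex has $2$ neighbors; in these cases the chain in fact fails immediately at $n = 3$, so the theorem still holds.
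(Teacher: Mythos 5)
Your overall strategy matches the paper's: show each newly added vertex must have at least two neighbours in the current set, deduce that the chain stays in a fixed slab $C$ (equivalently, that the $e_i$-extents for $i\geq 2$ are bounded), and then derive a contradiction from this boundedness. Both the paper and you observe that any vertex outside $C$ has at most one $U$-neighbour in $C$, so the heart of the matter is the claim that each step of the chain adds a vertex with $\geq 2$ neighbours. The final contradiction is reached by different means: you invoke the asymptotic shape theorem of \cite{BEKR2023} (that optimal sets approximate the dilated zonotope $tZ$), while the paper gives a more self-contained argument via the Loomis--Whitney inequality, showing directly that any set confined to $C$ has edge boundary $\Omega(m)$, which a cube of $m$ points beats for large $m$. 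Your route is valid but uses a heavier external result; you should also spell out why $o(n)$ symmetric difference from $tZ$ forces the $e_i$-extent to grow like $n^{1/d}$, since on its face the approximation statement does not literally say this.

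The genuine gap is in your first step. The inequality $\partial(S_n)\geq 2dn+2$ you write down for sets with no $e_1$-edge is wrong: each vertex contributes just its two $\pm e_1$ boundary edges, giving only $\partial(S_n)\geq 2n$, and even that does not exceed $f(n)$ for small $n$ (for $d=2$, $n=3$ one has $f(3)=12 > 6 = 2n$). Moreover, you only rule out the absence of $e_1$-edges, but an optimal set could a priori contain a $2e_1$-edge $\{u,u+2e_1\}$ without any $e_1$-edge, and then the candidate $u+e_1$ still has two neighbours; this case needs to be treated, not quietly absorbed. The residual case, where $S_m$ has \emph{neither} $e_1$- nor $2e_1$-edges, is exactly what the paper handles by noting that then $S_m$ lies in a coordinate hyperplane (up to decomposition into components) and invoking Bollob\'as--Leader \cite{BL91}: such a set cannot be optimal for $(\mathbb{Z}^d,\ell_1)$, hence not for the supergraph $\mathbb{Z}^d_U$, once $m>2^{d-1}$; the harmless reduction ``WLOG $n>2^{d-1}$'' is what lets the paper escape the small-$n$ regime altogether. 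You flag the small-$n$ difficulty yourself, but you do not supply a repair, and the numerical bound you cite would not close it even if it were correct. With the three-case analysis and the WLOG reduction in place, your argument becomes essentially the paper's.
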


Our second result is a solution of the EIP for another Cayley graph on \(\mathbb{Z}^2\) with nested solutions.
The generating set for this graph is $U=\{(\pm1,0),(0,\pm1),\\ \pm(1,1),\pm(1,-1),\pm(-1,2),\pm(-2,1)\}$.
Thus, it contains \((\mathbb{Z}^2,\ell_\infty)\) as a subgraph.
In fact, it is more suitable to consider this to be the graph on the triangular lattice with edges for all pairs at distance $1$ or $\sqrt{3}$.
As a Cayley graph on \(\Lambda\), the generating set is depicted in Figure~\ref{fig:generatingset}.
We denote it by $\Lambda_U$.

\begin{figure}
    \centering
    \begin{tikzpicture}[>=Stealth,scale=1.3]
    \def\points{(g1)/$g_1$/0,($(g1)+(g2)$)/$g_1+g_2$/30,(g2)/$g_2$/60,($2*(g2)-(g1)$)/$2g_2-g_1$/90,($(g2)-(g1)$)/$g_2-g_1$/120, ($-2*(g1)+(g2)$)/$-2g_1+g_2$/150,($-1*(g1)$)/$-g_1$/180,($-1*(g1)-(g2)$)/$-g_1-g_2$/210,($-1*(g2)$)/$-g_2$/240,($-2*(g2)+(g1)$)/$g_1-2g_2$/270,($-1*(g2)+(g1)$)/$g_1-g_2$/300, ($2*(g1)-(g2)$)/$2g_1-g_2$/330}
    \clip (-2.55,-2.2) rectangle (2.55,2.2);
    \coordinate (g1) at (1,0);
    \coordinate (g2) at (60:1);
    \foreach \x in {-3,...,3} 
        {
        \foreach \y in {-3,...,3}
        {
        \coordinate (cur) at ($\x*(g1)+\y*(g2)$);
        \filldraw[draw=black] (cur) circle [radius=0.025];
        }
        }
    \foreach \p/\ptext/\pang in \points {
        \draw[->,thick] (0,0) -- \p;
        \node at \p [label={[label distance=-1mm]\pang:\scriptsize{\ptext}}]  {};
    }
    \end{tikzpicture}
    \caption{Generating set of Theorem \ref{the:e}}
    \label{fig:generatingset}
\end{figure}
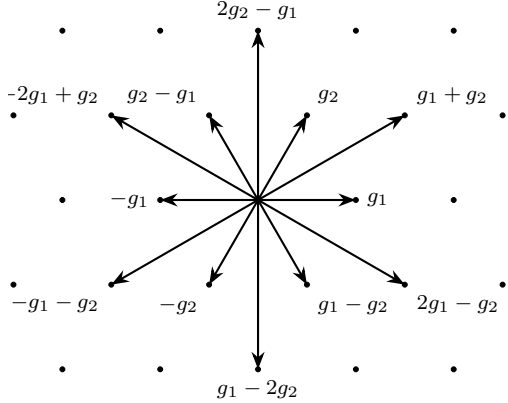

\begin{theorem}\label{the:e}
        Let \(\Lambda_U\) be the undirected Cayley graph with vertex set \( \Lambda\) and symmetric generating set
    \(U=\{ \pm g_1, \pm(g_1+g_2), \pm g_2, \pm(2g_2-g_1), \pm(g_2-g_1), \pm(g_2-2g_1)\}\), where \(g_1=(1,0)\) and \(g_2=(1/2,\sqrt{3}/2)\).
    The maximum number of edges of a subgraph of \(\Lambda_U\) (necessarily induced) with \(n\geq 3\) vertices is
    \[
    e(n):=\begin{cases}
        6n-4\sqrt{6n-6}\text{ if }n=24k^2-24k+7 \text{ for some }k \in \mathbb{N} \\
        \lfloor6n-\sqrt{96n-63}\rfloor \text{ otherwise}.
        \end{cases}
    \]
    Additionally, the EIP of \(\Lambda_U\) has nested solutions.
\end{theorem}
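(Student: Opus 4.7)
\emph{Proof plan.} Since $\Lambda_U$ is $12$-regular, $\partial(S)=12|S|-2e(G[S])$ for every finite $S\subseteq\Lambda$, so minimising the edge boundary among $n$-element sets is equivalent to maximising the number of induced edges. I therefore work with edge counts throughout.

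The first step is an upper bound via projections. For each of the six antipodal pairs in $U$ pick a representative $u_1,\dots,u_6$, and for each $i$ let $\pi_i$ denote orthogonal projection onto a line perpendicular to $u_i$. A lattice line parallel to $u_i$ meeting $S$ in $k$ vertices contributes at most $k-1$ induced edges of direction $u_i$, with equality iff those vertices form a consecutive run. Summing over the six directions gives
\[
e(G[S])\;\leq\;6|S|\;-\;\sum_{i=1}^{6}|\pi_i(S)|,
\]
with equality iff $S$ is \emph{$6$-convex}, i.e.\ meets every lattice line in each of the six generator directions in a consecutive block.

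The next step is a matching lower bound on $W(S):=\sum_{i=1}^{6}|\pi_i(S)|$ in terms of $|S|$. The six widths $w_i:=|\pi_i(S)|$ fall into two $D_6$-orbits (three short-generator widths and three long-generator widths), and the set $S$ is contained in the dodecagonal hull they determine. A lattice-point count in this hull produces an explicit quadratic upper bound $|S|\le Q(w_1,\dots,w_6)$ that is invariant under the natural $D_6$ action, and the required lower bound on $W(S)$ follows from solving the integer optimisation problem of minimising $\sum w_i$ subject to $Q(w_1,\dots,w_6)\ge n$. The two cases in the formula correspond to two regimes of this programme: when $n=24k^2-24k+7$ a distinguished $D_6$-symmetric integer choice of $(w_1,\dots,w_6)$ saturates the constraint on the nose, giving $\sum w_i=4\sqrt{6n-6}$ and hence the exceptional value $6n-4\sqrt{6n-6}$; otherwise no such symmetric integer solution exists and the minimum rounds up to $\lceil\sqrt{96n-63}\rceil$, producing $\lfloor 6n-\sqrt{96n-63}\rfloor$.

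Finally, to realise these bounds with a nested family, I would order $\Lambda$ first by a dodecagonal gauge $\rho(v)$ whose level sets are concentric dilates of $Z$, and then by a secondary cyclic rule within each level set chosen so that every prefix remains $6$-convex and has projection widths achieving the relevant integer minimum. The completed dodecagonal layers correspond exactly to $n=24k^2-24k+7$. The main obstacle I anticipate lies in this last step: the secondary order has to be designed so that $6$-convexity is preserved under every single-vertex addition across the $\sim 48(k-1)$ sizes in the $k$-th annulus, the widths must track the correct integer minimiser at every intermediate size, and the transition between successive layers must be seamless. Each single-vertex update is a routine local check, but organising them uniformly across an annulus, and in particular ruling out the possibility that some other $6$-convex set does better at a mid-annulus size, requires a careful and somewhat tedious case analysis.
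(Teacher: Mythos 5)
Your projection bound is correct and is, in disguise, equivalent to what the paper's Lemma~\ref{lem:bubt} provides: if for each $i$ we set $w_i=|\pi_i(S)|$, then for any $S$ one has $e(G[S])\le 6|S|-\sum_i w_i$, and for a completely filled $12$-gon $P\in\mathscr P$ one can check $\sum_i w_i=3b_u+5b_t+6=\partial(V(P))/2$, matching Lemma~\ref{lem:bubt}. The serious gap is the next step. You posit an explicit quadratic $Q(w_1,\dots,w_6)$ with $|S|\le Q(w)$, but the lattice-point count of a filled $12$-gon is \emph{not} a function of the six widths alone: the widths are only six linear combinations of the twelve side parameters $(u_1,\dots,u_6,t_1,\dots,t_6)$, and together with the two closure relations of Lemma~\ref{lem:uiticlose} this still leaves four free parameters that change the point count without changing any $w_i$ (this is exactly why Lemma~\ref{lem:nuiti} is a formula in $u_i,t_i$ rather than widths). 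So $Q$ would have to be defined as a maximum over all $12$-gons compatible with given widths, and evaluating that maximum, and then solving the integer programme $\min\sum w_i$ subject to $Q(w)\ge n$, \emph{is} the content of the theorem. The paper does this by the parametrization in Section~\ref{sec:upbound}, an induction on $n$, a rounding/rearrangement argument (Claim~\ref{cla:maxmin}), and two computer-checked case analyses (Algorithms~\ref{alg:basecase} and~\ref{alg:IS}); none of that is subsumed by ``solving the integer optimisation problem.'' A second, smaller gap: equality in your projection bound characterises $6$-convex sets, which is strictly weaker than being a filled $12$-gon (hull); $6$-convex sets can be non-convex, staircase-like configurations. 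The paper closes this with Claims~\ref{cla:noline}--\ref{cla:neqP}, showing $\partial(\operatorname{hull}(S))\le\partial(S)$ after some translation normalisations, and you would need an analogous argument.

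On the nested ordering, your ``dodecagonal gauge plus secondary cyclic rule'' is the right intuition, but it does not constitute a proof, and you yourself flag this. The delicate point is exactly the one you name: within each annulus of $\sim 48(k-1)$ sizes one must exhibit an order of side-additions such that at every intermediate step the current set achieves $e(n)$, and one must simultaneously rule out some other $6$-convex set doing better at a mid-annulus size. The paper resolves this by the $a<33$ criterion for $12$-gon edge formulas $6n-\sqrt{96n+a}$, and then searches an auxiliary directed graph of $12$-gon shapes (Breadth-First Search, Table~\ref{table}, $48$ steps) to find an admissible side-addition sequence from $P_k$ to $P_{k+1}$. Without producing such a sequence and verifying the criterion at each step, the construction of nested solutions is not complete. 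In short: your high-level scheme is parallel to the paper's, and the first inequality is sound, but both the quantitative lower bound on $\sum w_i$ and the concrete interpolating order require the detailed (and partly computational) work that you have deferred.
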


The first few values of \(n\) where \(e(n)=6n-4\sqrt{6n-6}\) are \(n=7,55, 151, 295,\\ 487\) and \(727\). In Figure~\ref{fig:extremal} we depict the unique (up to translation) extremal subgraph of $\Lambda_U$ with \(55\) vertices.

The subgraphs of $\Lambda_U$ with $n$ vertices and $e(n)$ edges are candidate extremal graphs for a problem of Erd\H{o}s and Vesztergombi \cite{Csizmadia} on the maximum number of occurrences of the smallest and second smallest distances in a set of $n$ points in the plane.
Let $S$ be a set of $n$ points in the plane, and denote by $m_1(S)$ and $m_2(S)$ the number of occurrences of the smallest and second smallest distance in $S$.
Let $f(n)$ be the maximum value of $m_1(S)+m_2(S)$, where the maximum is taken over all sets $S$ of $n$ points.
Vesztergombi \cite{Vesztergombi} showed that $f(n)\leq 6n$.
(See also Csizmadia \cite{Csizmadia} for further results.)
Theorem~\ref{the:e} implies that $f(n)\geq e(n)$,
with the lower bound being given by subsets of the triangular lattice, with smallest distance $1$ and second smallest distance $\sqrt{3}$.
\begin{conjecture}
For any sufficiently large $n$, $f(n)=e(n)$, with the only sets $S$ of $n$ points attaining $f(n)=m_1(S)+m_2(S)$ being geometrically similar to the extremal sets on the triangular lattice.
\end{conjecture}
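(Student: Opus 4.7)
The plan is to reduce the conjecture to a structural rigidity statement and then invoke Theorem~\ref{the:e}. Since Theorem~\ref{the:e} already gives $f(n)\geq e(n)$ by taking $S$ to be an extremal subset of the scaled triangular lattice, it suffices to prove the matching upper bound $f(n)\leq e(n)$ for all sufficiently large $n$. Rescale so that the smallest distance of $S$ equals $1$, and let $r>1$ denote the ratio of the second smallest to smallest distance. The strategy is to show that if $m_1(S)+m_2(S)$ is close to $6n$, then up to translation $S\subseteq\Lambda$ and $r=\sqrt{3}$, so that $m_1(S)+m_2(S)$ is precisely the number of edges of the induced subgraph of $\Lambda_U$ on $S$, hence at most $e(n)$.

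First I would assemble the known ingredients. The kissing-number bound in the plane gives $m_1(S)\leq 3n$, and an analogous argument applied to the graph of second-shortest distances gives $m_2(S)\leq 3n$ (this is essentially the bound of Vesztergombi \cite{Vesztergombi}). To make room for the $\Theta(\sqrt{n})$ deficit in $e(n)$, I would prove a quantitative stability form of these inequalities: if $m_1(S)+m_2(S)>6n-c\sqrt{n}$, then all but $O(\sqrt{n})$ points of $S$ have exactly six unit-distance neighbors, which are then forced to form a regular hexagon around them. Local compatibility between such hexagons centred at adjacent points fixes a triangular lattice $\Lambda$ on any connected patch.

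Next I would carry out a global rigidity step: show that a near-extremal $S$ lies in a \emph{single} translate of a single triangular lattice. Once $S\subseteq\Lambda$ (up to a small exceptional set), a short case analysis on $r$ forces $r=\sqrt{3}$: smaller values of $r$ produce no edges in $\Lambda$, while larger values lose many potential edges at distance $\sqrt{3}$. Thus $S$ determines a subgraph of $\Lambda_U$, and Theorem~\ref{the:e} yields $m_1(S)+m_2(S)\leq e(|S|)$. The $O(\sqrt{n})$ exceptional off-lattice points can be absorbed by a boundary accounting argument, each contributing only $O(1)$ edges and in aggregate costing $o(\sqrt{n})$ less than the slack already present in $e(n)$.

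The main obstacle will be the global rigidity step, namely promoting "most points are locally arranged on a hexagonal grid" to "$S$ is a subset of one translate of $\Lambda$". A priori, $S$ could consist of several large lattice patches, each on its own translate or rotation of $\Lambda$, glued along boundaries where unit or $\sqrt{3}$ distances happen to coincide. One must show that any such gluing forfeits more edges than are gained, and similarly that tiny perturbations of boundary points cannot locally increase the edge count. This is in the spirit of the rigidity arguments of Harborth \cite{Harborth1974} and Csizmadia \cite{Csizmadia}, but here one must simultaneously control incidences at \emph{two} distances, which appears to be the genuinely new difficulty needed to settle the conjecture.
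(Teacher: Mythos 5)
This statement is a \emph{conjecture} in the paper, not a theorem: the paper supplies no proof (only the observation that Theorem~\ref{the:e} gives the lower bound $f(n)\geq e(n)$, via near-extremal subsets of $\Lambda$ with smallest distance $1$ and second smallest distance $\sqrt{3}$). So there is nothing in the paper for your attempt to be compared against, and any purported proof should be checked on its own merits.

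Your write-up is an outline of a plausible attack, not a proof, and to your credit you say so: you explicitly flag the global rigidity step as ``the genuinely new difficulty needed to settle the conjecture.'' That is precisely the content of the conjecture being open, so the ``proof'' does not actually close the gap. Beyond the acknowledged global-rigidity gap, two of the intermediate steps are themselves substantial and unproven. First, the quantitative stability claim (``if $m_1(S)+m_2(S)>6n-c\sqrt{n}$, then all but $O(\sqrt{n})$ points have exactly six unit-distance neighbours forming a regular hexagon'') would require its own careful argument; the degree bound for the \emph{second}-smallest-distance graph is not a direct kissing-number argument, and the combined degree bound of $12$ is not attained vertex-by-vertex independently of the geometry, so this stability lemma is far from immediate. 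Second, the boundary-accounting step that absorbs $O(\sqrt{n})$ exceptional points at a cost of $o(\sqrt{n})$ edges is exactly where the two competing $\Theta(\sqrt{n})$ terms must be compared with the correct constants, and this would need to be done precisely against the sharp $e(n)$ bound, not merely to within a constant factor. In short: the architecture you propose (reduce to rigidity, then apply Theorem~\ref{the:e}) is the natural one and aligned with how Harborth-type results are usually proven, but every load-bearing lemma in the chain is still a conjecture, so this should be viewed as a research programme rather than a proof.
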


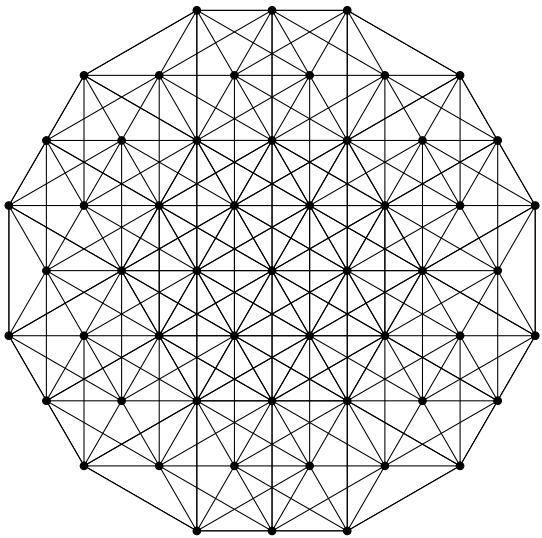
\begin{figure}
    \centering
\begin{tikzpicture}
\def\points{(e1),(e2),($(e1)+(e2)$),($2*(e2)-(e1)$),($(e2)-(e1)$), ($-2*(e1)+(e2)$),($-1*(e1)$),($-1*(e2)$),($-1*(e1)-(e2)$),($-2*(e2)+(e1)$),($-1*(e2)+(e1)$), ($2*(e1)-(e2)$)}
\def\bpoints{($3*(e2)+(e3)-(e1)$),($3*(e2)+(e3)$),($3*(e2)+(e1)$),($3*(e2)+(e1)-(e3)$),($3*(e1)+(e2)$),($3*(e1)-(e3)$),($3*(e1)-(e3)-(e2)$),($-3*(e3)+(e1)$),($-3*(e3)-(e2)$),($-3*(e3)-(e2)-(e1)$),($-3*(e2)-(e3)$),($-3*(e2)-(e1)$),($-3*(e2)-(e1)+(e3)$),($-3*(e1)-(e2)$),($-3*(e1)+(e3)$),($-3*(e1)+(e3)+(e2)$),($3*(e3)-(e1)$),($3*(e3)+(e2)$)}
\def\ipoints{(0,0), (e1), (e2), (e3), ($-1*(e1)$), ($-1*(e2)$), ($-1*(e3)$),
            ($(e1)+(e1)$), ($(e1)+(e2)$), ($(e1)-(e3)$),
            ($-1*(e1)-(e1)$), ($-1*(e1)-(e2)$), ($-1*(e1)+(e3)$),
            ($(e2)+(e2)$), ($(e2)+(e3)$), ($(e2)+(e3)-(e1)$),
            ($-1*(e2)-(e2)$), ($-1*(e2)-(e3)$), ($-1*(e2)-(e3)+(e1)$)}
\coordinate (e1) at (1,0);
\coordinate (e2) at (60:1);
\coordinate (e3) at (120:1);
\begin{scope}[on background layer]
\clip ($3*(e2)+(e3)$)-- ($3*(e2)+(e1)$) -- ($3*(e1)+(e2)$) --($3*(e1)-(e3)$) -- ($-3*(e3)+(e1)$) -- ($-3*(e3)-(e2)$) -- ($-3*(e2)-(e3)$) -- ($-3*(e2)-(e1)$) -- ($-3*(e1)-(e2)$) -- ($-3*(e1)+(e3)$) -- ($3*(e3)-(e1)$) -- ($3*(e3)+(e2)$);
\foreach \x in {-4,...,4} 
    {
    \foreach \y in {-4,...,4}
    {
    \coordinate (cur) at ($\x*(e1)+\y*(e2)$);
    \filldraw[draw=black] (cur) circle [radius=0.05];
    }
    }
\foreach \b in \bpoints {
    \foreach \p in \points{
        \draw \b -- +\p;
        }
    }
\end{scope}
\foreach \i in  \ipoints {
    \foreach \p in \points{
    \draw \i -- +\p;
    }
    }
\draw ($3*(e1)$) -- ($3*(e2)$) -- ($3*(e3)$) -- ($-3*(e1)$)-- ($-3*(e2)$) -- ($-3*(e3)$) -- cycle;
\draw ($3*(e2)+(e3)$)-- ($3*(e2)+(e1)$) -- ($3*(e1)+(e2)$) --($3*(e1)-(e3)$) -- ($-3*(e3)+(e1)$) -- ($-3*(e3)-(e2)$) -- ($-3*(e2)-(e3)$) -- ($-3*(e2)-(e1)$) -- ($-3*(e1)-(e2)$) -- ($-3*(e1)+(e3)$) -- ($3*(e3)-(e1)$) -- ($3*(e3)+(e2)$) -- cycle;
\draw ($-2*(e1)+(e3)$)--($-2*(e1)-(e2)$);
\draw ($-2*(e2)-(e1)$)--($-2*(e2)-(e3)$);
\draw ($-2*(e3)+(e1)$)--($-2*(e3)-(e2)$);
\draw ($2*(e1)+(e2)$)--($2*(e1)-(e3)$);
\draw ($2*(e2)+(e1)$)--($2*(e2)+(e3)$);
\draw ($2*(e3)+(e2)$)--($2*(e3)-(e1)$);

\foreach \p in \bpoints 
    {
    \filldraw[draw=black] \p circle [radius=0.05];
    }

\end{tikzpicture}
    \caption{The extremal subgraph of \(\Lambda_U\) with \(24k^2-24k+7\) vertices (\(k=2\))}
    \label{fig:extremal}
\end{figure}

\section{Non-existence of an ordering}

\begin{proof}[Proof of Theorem~\ref{the:}]
Let $e_1,\dots,e_d$ denote the standard unit basis of $\mathbb{Z}^d$, and $U=\{\pm e_i : i=1,\dots,d\}\cup\{\pm 2e_1\}$.
Then in the graph $\mathbb{Z}^d_U$, two vertices $x=(x_1,x_2,\dots,x_d)$ and $y=(y_1,y_2,\dots,y_d)$ are joined when either $\|x-y\|_1=1$ or $|x_1-y_1|=2$.

Let $n\in\mathbb{N}$ be arbitrary, and let $S$ be an $n$-element subset of $\mathbb{Z}^d$ for which the number of edges of the subgraph $\mathbb{Z}^d_U[S]$ induced by $S$ is the maximum among all $n$-element subsets of $\mathbb{Z}^d$.
We will show that there does not exist a sequence $S_i$, $i\geq n$, of subsets of $\mathbb{Z}^d$ such that $S=S_n$, $|S_i|=i$, $S_i\subset  S_{i+1}$ for all $i\geq n$, and each $S_i$ maximizes the number of edges in the subgraph $\mathbb{Z}^d_U[S_i]$.

Suppose then that there is such a sequence.
Without loss of generality we may assume that $n > 2^{d-1}$.
Let $m_j=\min\{x_j:(x_1,x_2,\dots,x_d)\in S\}$ and $M_j=\max\{x_j:(x_1,x_2,\dots,x_d)\in S\}$ for each $j=2,\dots,d$.
Let $C=\mathbb{Z}\times\prod_{j=2}^d\{m_j,\dots,M_j\}$.
We will prove by induction that $S_i\subset C$ for all $i \geq n$, which will subsequently lead to a contradiction.

The basis case is trivial, since $S_n=S\subset C$ by the definition of $C$.
As an induction hypothesis we assume that for fixed $m\geq n$, $S_m\subset C$.
We have to show that $S_{m+1}\subset C$.

Note that for any $x\in\mathbb{Z}^d\setminus C$, there is at most one edge from $x$ to an element of $C$.
Thus, to show that $S_{m+1}\subset C$, it is sufficient to prove that there exists $x\in C\setminus S_m$ such that $x$ has at least two neighbours in $S_m$, since this will imply that the number of neighbours between the point $x$ in $S_{m+1}\setminus S_m$ and $S_m$ has to be at least $2$, hence cannot be outside $C$.
We prove this by considering three mutually exclusive and exhaustive possibilities for the induced subgraph on $S_m$.

First, if $\mathbb{Z}^d_U[S_m]$ contains an edge $xy$ where $y-x= e_1$, then we may, by repeatedly adding $e_1$ to both $x$ and $y$, obtain $x,y\in S_m$ such that $y-x=e_1$, and $z:=y+e_1\notin S_m$.
However, then $z$ has at least the two neighbours $x$ and $y$ in $\mathbb{Z}^d_U$.

Second, if $\mathbb{Z}^d_U[S_m]$ does not contain any edge $xy$ where $y-x= e_1$, but does contain an edge $xy$ where $y-x=2e_1$, then $z:=x+e_1 =y-e_1\notin S_m$ has neighbours $x$ and $y$ in $\mathbb{Z}^d_U$.

Third, if $\mathbb{Z}^d_U[S_m]$ does not contain any edge $xy$ where $x-y\in\{\pm e_1,\pm 2e_1\}$, then all its edges are in the directions $\{\pm e_2,\dots,\pm e_d\}$.
In this case the subgraph decomposes into connected components that can be embedded into $(\mathbb{Z}^{d-1},\ell_1)$, which can be considered to be the subgraph induced by the coordinate hyperplane $x_1=0$.
However, it follows from \cite{BL91} that no optimal set for $(\mathbb{Z}^{d},\ell_1)$ can be contained in a coordinate hyperplane, unless $m\leq 2^{d-1}$.
Thus, if $m > 2^{d-1}$, then $S_m$ is not optimal for this supergraph of $(\mathbb{Z}^{d},\ell_1)$ either.

This finishes the induction step.
We then have that all $S_m$, $m\geq n$, are contained in $C$.
In order to obtain a contradiction, we now switch to considering the edge boundary of $S_m$, which we bound from below by using the Loomis--Whitney inequality \cite{LW1949}.
This will then be compared to a construction with a smaller edge boundary for large $m$, which will give the required contradiction.

Let $\pi_i\colon\mathbb{Z}^d\to\mathbb{Z}^{d-1}$, $i=1,\dots,d$, be the projection that deletes the $i$-th coordinate.
For each $i=1,\dots,d$, let $P_i=\pi(S_m)$.
Note that for each $y\in P_1$, the element $x$ in $S_m\cap\pi_1^{-1}(y)$ with smallest 1st coordinate has that $x-e_1,x-2e_1\notin S_m$, hence contributes $2$ to $\partial(S_m)$.
Similarly, the element in $S_m\cap\pi_1^{-1}(y)$ with largest 1st coordinate contributes an additional $2$ to $\partial(S_m)$.
Thus there is a contribution of $4|P_1|$ to $\partial(S_m)$.
In the same way, for each $i=2,\dots,d$, there is a contribution of $2|P_i|$ to $\partial(S_m)$.
Thus $\partial(S_m)\geq 4|P_1|+\sum_{i=2}^d2|P_i|$.
The Loomis--Whitney inequality states that $\prod_{i=1}^d|P_i|\geq |S_m|^{d-1}=m^{d-1}$.
Since $P_1\subseteq\prod_{j=2}^d\{m_j,\dots,M_j\}$, its cardinality is bounded: $|P_1|\leq\prod_{j=2}^d(M_j-m_j+1)=:M$.
It follows that $\prod_{i=2}^d|P_i|\geq m^{d-1}/M$, and applying the AM-GM inequality, we obtain $\frac1{d-1}\sum_{i=2}^d|P_i|\geq(\prod_{i=2}^d|P_i|)^{1/(d-1)}\geq m/M^{1/(d-1)}$, hence
$\partial(S_m)\geq cm$, where $c=(d-1)/M^{1/(d-1)}$ is a constant.

On the other hand, since $S_m$ is optimal, it cannot have a larger edge boundary than any other set of $m$ elements.
If we choose $m=k^d$ for some $k$, then we have for $S'=\prod_{i=1}^d\{1,\dots,k\}$ that $\partial(S')=2(d+1)k^{d-1}=2(d+1)m^{1-1/d}$.
Therefore, $cm\leq\partial(S_m)\leq\partial(S')=2(d+1)m^{1-1/d}$ which is a contradiction for large $m$.
\end{proof}

\section{Proof outline of Theorem \ref{the:e}}

Let \(\Lambda_U\) be the Cayley graph of \(\Lambda\) with generating set \(U\) stated in Theorem \ref{the:e}. We refer to edges in \(\Lambda_U\) as \emph{short edges} if they have unit length, and \emph{long edges} if they have length \(\sqrt{3}\). Since \(\Lambda_U\) is a \(12\)-regular graph, we have for any \(S\subseteq \Lambda\) with \(n\) vertices
\begin{equation}\label{eq:eindef}
    |E(\Lambda_U[S])|=6n-\frac{\partial(S)}{2}.
\end{equation}

To prove Theorem \ref{the:e} we first show in Section \ref{sec:upbound} that any subgraph of \(\Lambda_U\) with \(n\geq 3\) vertices has at most \(e(n)\) edges. In Section \ref{sec:seq}, we then give an ordering \(v_1,v_2,\dots\) of \(\Lambda\) such that for each \(n\), the graph \(\Lambda_U[\{v_1,v_2,\dots, v_n\}]\) has \(e(n)\) many edges. This ordering, together with the upper bound, proves that \(\Lambda_U\) has nested solutions. 

To show that \(e(n)\) is an upper bound for the number of edges of an \(n\)-vertex subgraph of \(\Lambda_U\), we first show this upper bound for a particular class of subgraphs of \(\Lambda_U\) that can be thought of as (possibly degenerate) completely filled-up lattice $12$-gons.
We define these as follows.

\begin{definition}\label{def:p(g)}
    For any finite \(S\subseteq \Lambda\), the \emph{hull} of \(S\), denoted as \(\operatorname{hull}(S)\subseteq \Lambda\), is the intersection of the \(12\) supporting half-planes of \(S\) parallel to an element of \(U\). Denote \(\mathscr{P}=\{\Lambda_U[\operatorname{hull}(S)]: S\subseteq \Lambda, |S|<\infty\}\).
\end{definition}

\begin{figure}
    \centering
\begin{tikzpicture}
\coordinate (e1) at (1,0);
\coordinate (e2) at (60:1);
\coordinate (e3) at (120:1);
\begin{scope}[on background layer]
\clip (-3.4,-3) rectangle (3.4,3);
\foreach \x in {-4,...,4} 
    {
    \foreach \y in {-4,...,4}
    {
    \coordinate (cur) at ($\x*(e1)+\y*(e2)$);
    \filldraw[draw=black] (cur) circle [radius=0.025];
    }
    }

\draw[gray, dashed] ($-5*(e2)$) -- ($5*(e3)$);
\draw[gray, dashed] ($5*(e2)$) -- ($-5*(e3)$);
\draw[gray, dashed] ($3*(e3)+5*(e2)$) -- ($3*(e3)-5*(e2)$);
\draw[gray, dashed] ($-2*(e3)+(e1)+5*(e2)$) -- ($-2*(e3)+(e1)-5*(e2)$);

\draw[gray, dashed] ($2*(e2)+(e1)+5*(e3)$) -- ($2*(e2)+(e1)-5*(e3)$);
\draw[gray, dashed] ($-2*(e1)-(e2)+5*(e3)$) -- ($-2*(e1)-(e2)-5*(e3)$);

\draw[gray, dashed] ($-3*(e2)-3*(e1)$) -- ($-3*(e3)+3*(e1)$);
\draw[gray, dashed] ($3*(e2)+3*(e1)$) -- ($3*(e3)-3*(e1)$);
\draw[gray, dashed] ($3*(e3)+5*(e1)+5*(e2)$) -- ($3*(e3)-5*(e1)-5*(e2)$);
\draw[gray, dashed] ($-2*(e3)-(e2)+5*(e1)+5*(e2)$) -- ($-2*(e3)-(e2)-5*(e1)-5*(e2)$);
\draw[gray, dashed] ($-2*(e2)-(e1)-5*(e1)+5*(e3)$) -- ($-2*(e2)-(e1)+5*(e1)-5*(e3)$);
\draw[gray, dashed] ($2*(e2)+(e3)-5*(e1)+5*(e3)$) -- ($2*(e2)+(e3)+5*(e1)-5*(e3)$);

\end{scope}
\def\points{(0,0), (e3), ($-1*(e1)$), ($-1*(e3)$), ($2*(e1)$), ($(e1)+(e2)$), ($2*(e2)$), ($(e2)+(e3)$), ($-2*(e1)$), ($-1*(e1)-(e2)$), ($-2*(e2)$), ($2*(e1)-(e3)$), ($2*(e2)+(e1)$), ($2*(e2)+(e3)$), ($2*(e3)+(e2)$), ($-2*(e1)+(e3)$), ($-2*(e1)-(e2)$), ($-2*(e2)-(e1)$), ($-2*(e2)-(e3)$), ($-2*(e2)-(e3)+(e1)$), ($3*(e3)$)}

\def\rpoints{(e1),(e2),($-1*(e2)$), ($(e1)-(e3)$),($(e3)-(e1)$),($2*(e1)-(e2)$),($-2*(e3)$),($-1*(e1)-2*(e3)$),($(e1)-2*(e3)$),($2*(e3)$),($-1*(e1)+2*(e3)$),
($2*(e1)+(e2)$)}

\foreach \p in \points {
    \filldraw[draw=black] \p circle [radius=0.075];
    }
\foreach \p in \rpoints {
    \filldraw[draw=black, fill=red] \p circle [radius=0.075];
    }
\end{tikzpicture}
    \caption{The hull of the black points is the set of black and red points}
    \label{fig:Hull}
\end{figure}
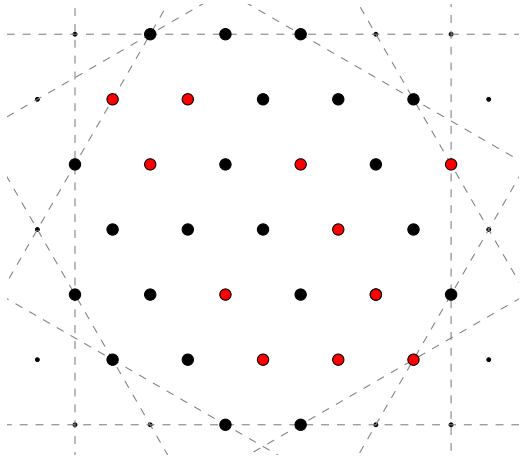

Figure~\ref{fig:Hull} depicts an example of \(\operatorname{hull}(S)\) for a particular \(S\subseteq \Lambda\). We use induction on \(n\) to show that any \(P\in \mathscr{P}\) with \(n\) vertices has at most \(e(n)\) edges. The base case of this induction goes up to $n=30$ and involves a Python computation (Algorithm~\ref{alg:basecase}). In the inductive step, we rearrange the points of $P$ to make $P$ ``rounder''. This potentially leads to constructing a new $P^*\in\mathscr{P}$ with more vertices, where we can either apply the inductive hypothesis after removing its boundary, or for a certain range of polygons very close to optimal, we need to do some exact symbolic computations using sympy (Algorithm~\ref{alg:IS}).

Once we have shown the upper bound for all sets in \(\mathscr{P}\), the general case is straightforward.
We again use induction on \(n\) to show that any subgraph of \( \Lambda_U\) with \(n\) vertices has at most \(e(n)\) edges. During this process we will prove additionally that when \(n=24k^2-24k+7\) for some \(k\in \mathbb{N}\), the \(n\)-vertex subgraph of \(\Lambda_U\) with \(e(n)\) edges is unique up to a translation. This enables us to define an ordering of \(\Lambda\) by interpolating between the unique extremal subgraph $P_k$ when \(n=24k^2-24k+7\) for some \(k\in \mathbb{N}\), and the unique extremal subgraph $P_{k+1}$ when \(n=24(k+1)^2-24(k+1)+7\).
This interpolation entails finding a specific sequence of length $48$ of the $12$ orientations of sides that have to be added to $P_k$ to get to $P_{k+1}$ (Table~\ref{table}).
This sequence is found by using Breadth-First Search in an auxiliary directed graph that represents all ways of adding sides.

\section{The upper bound for Theorem \ref{the:e}}\label{sec:upbound}
\subsection{The upper bound for polygons}

We start this section by uniquely associating, up to a translation, each \(P\in \mathscr{P}\) with a set of \(12\) parameters.
The boundary of \(P\) is a convex polygon with \(12\) possibly degenerate sides, each parallel to an element of \(U\). 
Direct each side of \(P\) in a counterclockwise manner.

\begin{definition}\label{def:uiti}
    Let \(u_1\) denote the number of edges in the side of \(P\) in the direction \(g_1\) (i.e. the length of the side of $P$ in the direction of $g_1$). Let \(t_1\) denote the number of edges in the side of \(P\) in the direction \(g_1+g_2\) (i.e. the length divided by $\sqrt{3}$ of the side of $P$ in the direction of $g_1+g_2$). Let \(u_2\) denote the number of edges in the side of \(P\) in the direction \(g_2\). Let \(t_2\) denote the number of edges in the side of \(P\) in the direction \(2g_2-g_1\). Continue in this way counterclockwise around the boundary of \(P\), alternating between \(u_i\) and \(t_i\). Figure~\ref{fig:uiti} depicts an example of which sides of \(P\) correspond to which parameters.
\end{definition}

\begin{figure}
    \centering
\begin{tikzpicture}[>=Stealth]
\coordinate (e1) at (1,0);
\coordinate (e2) at (60:1);
\coordinate (e3) at (120:1);
\begin{scope}[on background layer]
\clip (-3.4,-3) rectangle (3.4,3);
\filldraw[gray!20!white] ($-2*(e2)-(e3)$)--++(e1)--++($(e1)+(e2)$)--++(e2)--++($(e2)+(e3)$)--++(e3)--++($(e3)-(e1)$)--++($-1*(e1)$)--++($-1*(e1)-(e2)$)--++($-1*(e2)$)--++($-1*(e2)-(e3)$)--++($-1*(e3)$)--cycle;
\foreach \x in {-4,...,4} 
    {
    \foreach \y in {-4,...,4}
    {
    \coordinate (cur) at ($\x*(e1)+\y*(e2)$);
    \filldraw[draw=black] (cur) circle [radius=0.025];
    }
    }
\end{scope}
\draw[->] ($-2*(e2)-(e3)$)--++(e1) node[midway, label={[label distance=-1.5mm]90:$u_1$}] {};
\draw[->] ($-2*(e3)-(e2)$)--++($(e1)+(e2)$) node[midway, label={[label distance=-2.5mm]120:$t_1$}] {};
\draw[->] ($-2*(e3)+(e1)$)--++(e2) node[midway, label={[label distance=-2.5mm]150:$u_2$}] {};
\draw[->] ($2*(e1)-(e3)$)--++($(e2)+(e3)$)node[midway, label={[label distance=-1.5mm]180:$t_2$}] {};
\draw[->] ($2*(e1)+(e2)$)--++(e3)node[midway, label={[label distance=-2.5mm]210:$u_3$}] {};
\draw[->] ($2*(e2)+(e1)$)--++($(e3)-(e1)$) node[midway, label={[label distance=-2.5mm]240:$t_3$}] {};
\draw[->] ($2*(e2)+(e3)$)--++($-1*(e1)$)node[midway, label={[label distance=-1.5mm]270:$u_4$}] {};
\draw[->] ($2*(e3)+(e2)$)--++($-1*(e1)-(e2)$)node[midway, label={[label distance=-2.5mm]300:$t_4$}] {};
\draw[->] ($2*(e3)-(e1)$)--++($-1*(e2)$)node[midway, label={[label distance=-2.5mm]330:$u_5$}] {};
\draw[->] ($-2*(e1)+(e3)$)--++($-1*(e2)-(e3)$)node[midway, label={[label distance=-1.5mm]0:$t_5$}] {};
\draw[->] ($-2*(e1)-(e2)$)--++($-1*(e3)$)node[midway, label={[label distance=-2.5mm]30:$u_6$}] {};
\draw[->] ($-2*(e2)-(e1)$)--++($-1*(e3)+(e1)$) node[midway, label={[label distance=-2.5mm]60:$t_6$}] {};
\end{tikzpicture}
    \caption{What sides of \(P\) correspond to the parameters \(u_i\) and \(t_i\) \((i=1,\dots, 6)\) }
    \label{fig:uiti}
\end{figure}
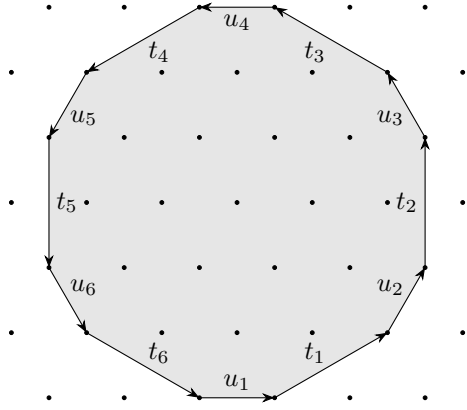
\begin{theorem}\label{the:P}
    An \(n\)-vertex \(P\in\mathscr{P}\) has at most \(\lfloor6n-\sqrt{96n-63}\rfloor\) edges, unless \(u_i=k\) and \(t_i=k-1\) for each \(i=1,\dots, 6\) and some \(k\in \mathbb{N}\), in which case \(P\) has at most \(6n-4\sqrt{6n-6}\) edges.
\end{theorem}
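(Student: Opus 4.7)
The plan is to parametrize each $P \in \mathscr{P}$ by its twelve side lengths $(u_1,t_1,\ldots,u_6,t_6)$ and reformulate Theorem~\ref{the:P} as an isoperimetric inequality on these integer parameters.

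First, I would derive formulas for $n$ and $\partial(P)$ in terms of the parameters. The closure of the $12$-gon imposes two linear constraints on $(u_i, t_i)$ (one per coordinate of $\sum u_i g_i^{(u)} + \sum t_i g_i^{(t)} = 0$). A Pick-type computation on $\Lambda$ then expresses $n = |V(P)|$ as an explicit quadratic form in the parameters, while a local analysis at each boundary vertex shows that $\partial(P)$ is affine in the parameters: each non-degenerate side of length $\ell$ contributes a fixed multiple of $\ell$ (the number of generators of $U$ pointing strictly outward through it), plus a fixed integer per corner. Both formulas inherit the six-fold cyclic symmetry of $U$.

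Using \eqref{eq:eindef}, the general bound $|E(P)|\leq 6n-\sqrt{96n-63}$ is equivalent to $\partial(P)^2 \geq 384n - 252$, while the extremal bound $|E(P)|\leq 6n-4\sqrt{6n-6}$ is equivalent to $\partial(P)^2 \geq 384n - 384$. Evaluating the formulas at the symmetric polygon $u_i = k$, $t_i = k-1$ yields $n = 24k^2 - 24k + 7$ and $\partial(P) = 96k - 48$, saturating the weaker inequality with equality. Thus Theorem~\ref{the:P} reduces to proving $\partial(P)^2 \geq 384n - 252$ whenever $P$ is not this symmetric polygon, together with the direct verification at the extremum.

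To prove the isoperimetric inequality, I would restrict to non-degenerate $12$-gons and apply Cauchy--Schwarz with weights dictated by the six-fold cyclic symmetry of $U$. The aim is to write the quadratic form $\partial(P)^2 - 384n + 384$ as a nonnegative sum of squares of integer linear combinations of the parameters (modulo the two closure relations), whose zero locus is exactly the one-parameter symmetric family $u_i = k$, $t_i = k-1$. This yields the continuous inequality $\partial(P)^2 \geq 384n - 384$, attained only at the symmetric extremum. Since any non-symmetric integer configuration shifts at least one parameter by at least $1$ from the symmetric value, at least one square in the decomposition takes a positive integer value; I would verify that the minimum such contribution is at least $132$, upgrading to the desired $\partial(P)^2 \geq 384n - 252$.

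The main obstacle is this integer-improvement step, particularly for polygons close to the symmetric extremum, where only a few parameters deviate and the sum-of-squares bound is nearly tight; here a delicate finite case analysis via symbolic computation (in the spirit of the authors' sympy routine) is the natural approach. A secondary difficulty is handling degenerate polygons (where some sides have length zero) separately, since the formula for $\partial(P)$ there picks up additional corner-correction terms that break the clean linear form; as in the authors' approach, these small cases can be absorbed into a computer-verified base case of an induction on $n$.
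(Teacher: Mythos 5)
Your parametrization, the closure relations, and the affine formula for $\partial(P)$ in the side lengths all match the paper's Lemmas~\ref{lem:nuiti}, \ref{lem:uiticlose}, and~\ref{lem:bubt}, and the reformulation of the two bounds as $\partial(P)^2 \geq 384n-252$ and $\partial(P)^2 \geq 384n-384$ (with the symmetric polygon $u_i=k$, $t_i=k-1$ giving $n=24k^2-24k+7$, $\partial=96k-48$, saturating the latter) is correct. But the strategy from there diverges sharply from the paper's, which proceeds by induction on $n$ after first \emph{rearranging} $P$ (Claim~\ref{cla:maxmin}) into a nearly-symmetric polygon with $\max u_i^* - 3 \leq \min t_i^*$, then splitting on boundary size $b^*$ and the deviation parameters $d_u^*+d_t^*$. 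Your plan has no rearrangement step and instead attacks the isoperimetric inequality directly by a sum-of-squares decomposition of $\partial(P)^2 - 384n + 384$; this is a genuinely different route, and if it worked it would be cleaner and avoid much of the paper's inductive machinery.

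The gap is that the two central steps of this route are asserted but not established, and both are delicate. First, the quadratic form $\partial(P)^2 - 384n + 384$ is only shown to be nonnegative on the cone of geometrically admissible parameters (nonnegative side lengths, interior angles $\geq 90^\circ$, and the two closure relations); you have not shown it is positive semidefinite on the \emph{linear} subspace cut out by the closure relations, which is what a decomposition as a nonnegative combination of squares of linear forms would require. Second, and more serious, the integer-improvement claim -- that any non-symmetric integer configuration forces $\partial^2 - 384n + 384 \geq 132$ -- is exactly tight: the next-best polygons (e.g.\ $u_i$ alternating $k,k-1$ with all $t_i=k-1$, giving $f(n)=6n-\sqrt{96n-63}$) achieve precisely $132$. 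A sum-of-squares identity only gives you ``some square term is $\geq$ its coefficient''; to get all the way to $132$ you would need either all coefficients to be at least $132$ (which is not something the structure guarantees) or a careful enumeration of which integer deviations from the symmetric line are possible under the closure constraints -- which is essentially the finite case analysis the paper performs in Algorithm~\ref{alg:IS}, after having first used the rearrangement step to cap the deviation parameters $d_u^*+d_t^*<18$. Without some substitute for that reduction, the minimization of a quadratic form over a sublattice minus a line is not a ``verification'' but an open-ended problem, and the proposal does not indicate how to make it finite. The degenerate cases you flag as a secondary difficulty are handled in the paper by Claim~\ref{cla:deg5P} together with the base cases; that part of your plan aligns with the paper.
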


This statement is a stronger version of Theorem \ref{the:e} restricted to the class \(\mathscr{P}\). Before proving this theorem by induction on \(n\), first we prove three lemmas.

\begin{lemma}\label{lem:nuiti}
    For any \(n\)-vertex \(P\in \mathscr{P}\) we have
    \begin{align*}
    n& = (t_2+2t_3+t_4+u_3+u_4+1)(t_1+2t_2+t_3+u_2+u_3+1)\\
     & \mathrel{\phantom{=}} {} -\binom{t_2+t_3+u_3+1}{2}-\binom{t_5+t_6+u_6+1}{2}-\sum_{i=1}^{6}{\binom{t_i+1}{2}}.
    \end{align*}
\end{lemma}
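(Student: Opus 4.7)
The plan is to identify $\Lambda$ with $\mathbb{Z}^2$ via $ag_1+bg_2\leftrightarrow(a,b)$ and compute $n=|P\cap\mathbb{Z}^2|$ as the number of lattice points in an axis-aligned bounding rectangle minus the lattice points in the four corner regions cut off by the sides of $P$. In these $\mathbb{Z}^2$-coordinates, four of the twelve generators — namely the ones corresponding to $u_1,u_2,u_4,u_5$ — point along the coordinate axes, while the other eight point off-axis. Starting from $v_0=(0,0)$ and walking counterclockwise around $\partial P$, with each side scaled by its corresponding $u_j$ or $t_j$, one obtains explicit coordinates for $v_0,\dots,v_{11}$, from which the extremes of $x$ and $y$ are read off to be attained on the four axis-aligned sides. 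This gives a bounding rectangle of $\mathbb{Z}^2$-dimensions $(A-1)\times(B-1)$ with
\begin{align*}
A-1 &= t_2+2t_3+t_4+u_3+u_4,\\
B-1 &= t_1+2t_2+t_3+u_2+u_3,
\end{align*}
the two natural expressions for each extent agreeing by the closing conditions that the twelve directed side-vectors sum to zero; the rectangle therefore contains exactly $AB$ lattice points, which is the leading product in the claimed formula.

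The four corner regions outside $P$ are cut off by the eight non-axis sides: $t_1$ alone cuts off the bottom-right, $t_4$ alone the top-left, the consecutive triple $t_2,u_3,t_3$ cuts the top-right as a (possibly degenerate) pentagon, and $t_5,u_6,t_6$ cuts the bottom-left similarly. Each single-side corner is a right-isoceles triangle with legs of length $t_1$ or $t_4$ in $\mathbb{Z}^2$-coordinates, which by direct enumeration contains $\binom{t+2}{2}$ lattice points in total, of which $t+1$ lie on the polygon boundary (the hypotenuse), leaving $\binom{t+1}{2}$ strictly outside $P$. For the top-right pentagon the key idea is to extend the $u_3$-side in both directions until it meets the two edges of the bounding rectangle: this produces an isoceles right triangle $T$ with legs of length $C:=t_2+u_3+t_3$ whose hypotenuse passes exactly through $v_4$ and $v_5$, and the pentagon decomposes as $T$ together with two auxiliary triangles of $\mathbb{Z}^2$-legs $t_2$ and $t_3$ attached along the hypotenuse outside $T$. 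Applying the lattice-point count to each of the three pieces and subtracting the polygon-boundary contributions by inclusion-exclusion yields the top-right corner's contribution $\binom{C+1}{2}+\binom{t_2+1}{2}+\binom{t_3+1}{2}$, and the mirror-symmetric argument on the bottom-left pentagon gives $\binom{D+1}{2}+\binom{t_5+1}{2}+\binom{t_6+1}{2}$ with $D:=t_5+t_6+u_6$.

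Summing the four corner contributions produces $\binom{C+1}{2}+\binom{D+1}{2}+\sum_{i=1}^{6}\binom{t_i+1}{2}$, and subtracting from $AB$ yields the lemma. The main bookkeeping hazard is to check that the auxiliary-triangle decomposition of a pentagonal corner remains valid in the degenerate cases where some of $t_2,u_3,t_3$ (or $t_5,u_6,t_6$) vanish and the pentagon collapses to a quadrilateral, triangle or point; however, since the count for each piece was derived from the explicit $\mathbb{Z}^2$-coordinates of the $v_i$, all degenerate configurations are absorbed uniformly and no separate case analysis is needed.
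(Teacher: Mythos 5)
The proposal is correct and takes essentially the same approach as the paper: both compute $n$ by starting from the circumscribed parallelogram (which after the $\mathbb{Z}^2$ identification is your bounding rectangle, with $AB$ lattice points) and subtracting the lattice points cut off by the remaining supporting half-planes. Your decomposition of each pentagonal corner into a big triangle $T$ (with hypotenuse on the $u_3$ or $u_6$ line) plus two small triangles for the adjacent $t_i$ sides is precisely the paper's two-stage cutting (parallelogram $\to$ hexagon $\to$ $P$) restricted to one corner, so the arithmetic is identical.
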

\begin{proof}
    The intersection of the four supporting half-planes of \(P\) that are parallel to the directions $g_1$ and $g_2$ form a possibly degenerate circumscribed parallelogram of \(P\). This is depicted in blue for a particular \(P\) in Figure~\ref{fig:Circum}. 
    
    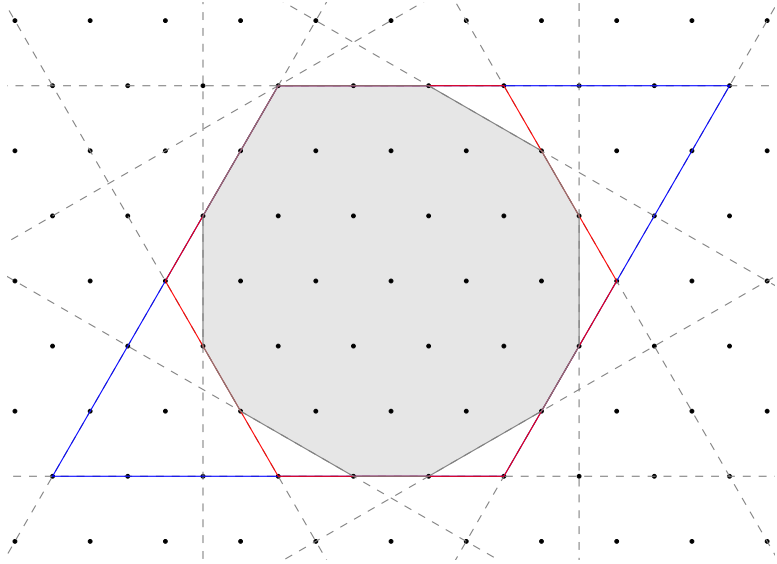
\begin{figure}
    \centering
\begin{tikzpicture}
\coordinate (e1) at (1,0);
\coordinate (e2) at (60:1);
\coordinate (e3) at (120:1);
\begin{scope}[on background layer]
\clip (-5.1,-3.7) rectangle (5.1,3.7);
\filldraw[gray!20!white] ($2*(e1)-(e3)$) --++ ($(e2)+(e3)$) --++ ($(e3)$) --++($(e3)-(e1)$) --++($-2*(e1)$) -- ++($-2*(e2)$) --++ ($-1*(e3)-(e2)$) --++ ($-1*(e3)$) --++ ($(e1)-(e3)$) --++($(e1)$) --++($(e1)+(e2)$)-- cycle;
\foreach \x in {-7,...,7} 
    {
    \foreach \y in {-7,...,7}
    {
    \coordinate (cur) at ($\x*(e1)+\y*(e2)$);
    \filldraw[draw=black] (cur) circle [radius=0.025];
    }
    }

\draw[gray, dashed] ($-3*(e2)-5*(e1)$) -- ($-3*(e3)+5*(e1)$);
\draw[gray, dashed] ($3*(e2)+5*(e1)$) -- ($3*(e3)-5*(e1)$);
\draw[gray, dashed] ($3*(e3)+10*(e2)$) -- ($3*(e3)-10*(e2)$);
\draw[gray, dashed] ($-2*(e3)+(e1)+10*(e2)$) -- ($-2*(e3)+(e1)-10*(e2)$);

\draw[gray, dashed] ($2*(e2)+(e1)+5*(e3)$) -- ($2*(e2)+(e1)-7*(e3)$);
\draw[gray, dashed] ($-2*(e1)-(e2)+6*(e3)$) -- ($-2*(e1)-(e2)-5*(e3)$);

\draw[gray, dashed] ($-5*(e2)$) -- ($5*(e3)$);
\draw[gray, dashed] ($5*(e2)$) -- ($-5*(e3)$);
\draw[gray, dashed] ($3*(e3)+5*(e1)+5*(e2)$) -- ($3*(e3)-5*(e1)-5*(e2)$);
\draw[gray, dashed] ($-2*(e3)-(e2)+5*(e1)+5*(e2)$) -- ($-2*(e3)-(e2)-5*(e1)-5*(e2)$);
\draw[gray, dashed] ($-2*(e2)-(e1)-5*(e1)+5*(e3)$) -- ($-2*(e2)-(e1)+5*(e1)-5*(e3)$);
\draw[gray, dashed] ($2*(e2)+(e3)-5*(e1)+5*(e3)$) -- ($2*(e2)+(e3)+5*(e1)-5*(e3)$);

\end{scope}

\draw[blue] ($3*(e1)+3*(e2)$) -- ($3*(e3)$) -- ($-3*(e1)-3*(e2)$) -- ($-3*(e3)$) -- cycle;
\draw[red] ($3*(e3)$) -- ($-3*(e1)$) -- ($-3*(e2)$) -- ($-3*(e3)$) -- ($3*(e1)$) -- ($3*(e2)$) -- cycle;
\draw[gray] ($2*(e1)-(e3)$) --++ ($(e2)+(e3)$) --++ ($(e3)$) --++ ($-1*(e1)+(e3)$) --++ ($-2*(e1)$) --++ ($-2*(e2)$) --++ ($-1*(e2)-(e3)$) --++ ($-1*(e3)$) --++ ($(e1)-(e3)$) --++ ($(e1)$) --++ ($(e1)+(e2)$) -- cycle;
\end{tikzpicture}
    \caption{Circumscribed parallelogram (blue) and hexagon (red) of \(P\) (gray) }
    \label{fig:Circum}
    \end{figure}
    
    The number of vertices contained in the parallelogram is easily seen to be \((t_2+2t_3+t_4+u_3+u_4+1)(t_1+2t_2+t_3+u_2+u_3+1)\). Now intersect this parallelogram with the two supporting half-planes of \(P\) parallel to \(g_2-g_1\). Figure~\ref{fig:Circum} depicts this in red. This creates a possibly degenerate hexagon by cutting off two triangular corners of the parallelogram. This removes \(\binom{t_2+t_3+u_3+1}{2}+\binom{t_5+t_6+u_6+1}{2}\) many vertices. 

    Lastly, we intersect this hexagon with the six supporting half-planes of \(P\) parallel to the long edges of \(\Lambda_U\). Figure~\ref{fig:Circum} depicts this in gray. This forms \(P\) by cutting off each corner of the hexagon. The corner that forms the side \(t_i\) has \(\binom{t_i+1}{2}\) many vertices.
\end{proof}

\begin{lemma}\label{lem:uiticlose}
    For any \(P\in \mathscr{P}\), the parameters \(u_i\) and \(t_i\) \((i=1,\dots, 6)\) satisfies
    \begin{align*}
    0&=u_1-u_4+t_1-t_4-t_2+t_5-u_3+u_6-2t_3+2t_6
    \\ \text{and}\quad 0&=t_1-t_4+u_2-u_5+2t_2-2t_5+u_3-u_6+t_3-t_6.
    \end{align*}
\end{lemma}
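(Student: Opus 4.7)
The plan is to exploit the purely geometric fact that $P$ is a closed (bounded) convex polygon: traversing its boundary once in the counterclockwise direction returns us to the starting vertex, so the vector sum of all directed sides along the boundary must equal $\mathbf{0}\in\mathbb{R}^2$. The two equations in the lemma should then fall out simply by writing this vector identity in the basis $\{g_1, g_2\}$ and equating coefficients.

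Concretely, by Definition~\ref{def:uiti} the twelve (possibly degenerate) sides of $P$ appear in counterclockwise order as $u_1, t_1, u_2, t_2, u_3, t_3, u_4, t_4, u_5, t_5, u_6, t_6$, pointing respectively in the directions
\[
g_1,\ g_1{+}g_2,\ g_2,\ 2g_2{-}g_1,\ g_2{-}g_1,\ g_2{-}2g_1,
\]
followed by the negatives of these same six directions (opposite sides of a convex polygon point in opposite directions). Each side labelled $u_i$ consists of $u_i$ unit lattice steps in its direction, and likewise for $t_i$. I would then write out the closing equation
\[
\sum_{i=1}^{6} u_i\, v_{u_i} + \sum_{i=1}^{6} t_i\, v_{t_i} \;=\; \mathbf{0},
\]
where $v_{u_i}, v_{t_i}$ denote the listed directions. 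Since $g_1, g_2$ are linearly independent over $\mathbb{R}$, this single vector equation is equivalent to the two scalar equations obtained by separately equating the $g_1$-coefficient and the $g_2$-coefficient to zero. Collecting the $g_1$-coefficient gives the first identity in the statement, and collecting the $g_2$-coefficient gives the second.

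The argument is essentially bookkeeping, and the only real care required is to track signs correctly: the counterclockwise orientation must produce opposite signs on the paired opposite sides ($u_i$ versus $u_{i+3}$, and $t_i$ versus $t_{i+3}$), so that opposite contributions combine into the differences $u_1 - u_4$, $t_1 - t_4$, etc., that appear in the statement. Beyond this routine sign tracking there is no genuine obstacle.
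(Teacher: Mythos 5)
Your proposal is correct and uses exactly the same argument as the paper: the closed-polygon condition forces the vector sum of the directed sides to vanish, and equating the $g_1$- and $g_2$-coefficients yields the two identities. The sign structure you describe (opposite sides $u_i$ versus $u_{i+3}$ and $t_i$ versus $t_{i+3}$ producing the differences) matches the paper's derivation precisely.
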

\begin{proof}
    Since \(P\) is a closed polygon, we obtain the following equation:
    \begin{align*}
    0&=(u_1-u_4)g_1+(t_1-t_4)(g_1+g_2)+(u_2-u_5)g_2 +(t_2-t_5)(2g_2-g_1)\\
    &\mathrel{\phantom{=}} {}+(u_3-u_6)(g_2-g_1)+(t_3-t_6)(g_2-2g_1).  
    \end{align*}
    By separating the coefficients of the linearly independent \(g_1\) and \(g_2\), we obtain the statement of the lemma.
\end{proof}

\begin{lemma}\label{lem:bubt}
    For any \(P\in \mathscr{P}\), let \(b\) denote the number of boundary edges of \(P\), and let \(b_u=\sum_{i=1}^{6}{u_i}\) and \(b_t=\sum_{i=1}^{6}{t_i}\). If the interior angle of each boundary vertex of \(P\) is at least \(90^\circ\), then
    \[
    \partial(V(P))=6b_u+10b_t+12= 6b+4b_t+12.
    \]
    
\end{lemma}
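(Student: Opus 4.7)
The plan is to prove the two equalities in turn. The equality $6b_u + 10b_t + 12 = 6b + 4b_t + 12$ is immediate from the identity $b = b_u + b_t$, so the main content lies in establishing $\partial(V(P)) = 6b + 4b_t + 12$.

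My approach is to decompose $\partial(V(P))$ by edge direction. Choose a set $U^+ = \{g_1, g_2, g_2 - g_1, g_1+g_2, 2g_2 - g_1, g_2 - 2g_1\}$ of six direction representatives, one per antipodal pair in $U$. For each $u \in U^+$, the lattice $\Lambda$ decomposes into a family of parallel lattice lines in direction $u$, which I call the \emph{rows}. Since $V(P)$ is convex (as the lattice hull of a finite set), its intersection with each row is either empty or a consecutive block of lattice points, and letting $r_u$ denote the number of non-empty blocks, each such block contributes exactly two boundary edges in directions $\pm u$, one at each of its two ends. Summing over the six representatives,
\[
\partial(V(P)) = 2 \sum_{u \in U^+} r_u.
\]

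The next step is to compute each $r_u$ explicitly in terms of the parameters $u_i,t_i$. I would write $r_u = 1 + w_u/s_u$, where $w_u$ is the perpendicular extent of $P$ in the direction orthogonal to $u$ and $s_u$ is the lattice row spacing perpendicular to $u$ (which works out to $\sqrt{3}/2$ for short $u$ and $1/2$ for long $u$). The extent $w_u$ can be read off the boundary traversal by summing the perpendicular projections of those sides with positive projection, giving for instance $r_{g_1} = 1 + t_1 + 2t_2 + t_3 + u_2 + u_3$, with analogous formulas for the other five directions. The interior-angle hypothesis enters precisely here: it rules out sharp pointy corners (interior angle $30°$ or $60°$) which would otherwise leave $P$ so thin in certain directions that some intermediate lattice row in the perpendicular range would fail to contain a lattice point of $V(P)$, causing the $r_u$ formula to overcount. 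Under the hypothesis every intermediate row does contain a lattice point of $V(P)$, so the formulas are exact. I expect this verification—ruling out the "pointy wedge" phenomenon via case analysis of the permissible local configurations at each corner—to be the main technical obstacle.

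Finally, I would sum the six expressions for $r_u$. The direct sum is a linear combination of the $u_i$ and $t_i$ with asymmetric coefficients, but taking $3 \cdot (\text{first relation}) + 1 \cdot (\text{second relation})$ from Lemma~\ref{lem:uiticlose} symmetrizes the result to $3(u_1 + \cdots + u_6) + 5(t_1 + \cdots + t_6) + 6 = 3b + 2b_t + 6$. Doubling yields $\partial(V(P)) = 6b + 4b_t + 12$, which completes the proof.
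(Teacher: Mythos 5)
Your proposal takes a genuinely different route from the paper. The paper counts $\partial(V(P))$ as a sum of vertex \emph{deficits}: boundary vertices contribute $5b+12$ (by the angle-sum formula), while each interior vertex's deficit is rewritten as the number of $\Lambda_U$-edges from it that cross the boundary, and the hypothesis on interior angles is used to show every such crossing edge is incident with a vertex of $P$; counting crossings per boundary edge (one per short, five per long) gives $b_u+5b_t$ and hence the formula. You instead decompose by direction: for each of the six directions $d\in U^+$, the blocks $V(P)\cap\text{row}$ contribute exactly two boundary edges each, so $\partial(V(P))=2\sum_d r_d$, and you compute the number of non-empty rows $r_d$ from the perpendicular extent. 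Your final algebra is correct: the six row counts sum to $3b_u+5b_t+6$ only after using both relations of Lemma~\ref{lem:uiticlose} with coefficients $3$ and $1$, which doubles to the claimed $6b+4b_t+12$. A nice contrast is that the paper's deficit count is manifestly rotationally symmetric and so does not invoke Lemma~\ref{lem:uiticlose} at all, whereas your row count is asymmetric and needs that lemma for the final cancellation.

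The genuine gap is the one you yourself flag: the identity $\partial(V(P))=2\sum_d r_d$ requires $r_d$ to be the number of \emph{non-empty} rows, whereas your formula computes the number of rows through the polygon $P$; you must show every intermediate row actually contains a point of $V(P)$. You assert the angle hypothesis does this but do not prove it, and the verification is not as quick as the proposal suggests. In particular the danger is not only at sharp $30^\circ$/$60^\circ$ corners: a $90^\circ$ corner (say $u_1=t_6=0$) already skips a row in the perpendicular long direction $g_1+g_2$, and one must argue the first lattice point on that skipped row (here $g_2$, the midpoint of the segment from $g_2-g_1$ to $g_1+g_2$) still lies in $V(P)$ because $t_1,u_6\geq1$; similar midpoint arguments are needed at every corner type for every one of the six directions. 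This is exactly the place where the paper's more local analysis (ruling out a $\Lambda_U$-edge crossing two boundary edges via a $60^\circ$ vertex or two parallel long sides) is doing the work, and your approach must reproduce that content. The claim is true, but as it stands your proposal does not establish it, and without it the lemma is not proved.
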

\begin{proof}
    The \emph{deficit} of a vertex \(v\) of \(P\) is \(\operatorname{def}(v):=|\{ uv\in E(\Lambda_U):u\notin V(P)\}|.\) It is easy to see that 
    \[
    \partial(V(P))=\sum_{v\in V(P)}\operatorname{def}(v).
    \]
    It is immediate that the sum of the deficit over all boundary vertices of \(P\) is \(5b+12\). Now we calculate the sum of the deficit over all vertices not on the boundary. Let \(v\) be a vertex not on the boundary of \(P\) with a deficit of \(d>0\). This means \(v\) must be incident with \(d\) edges in \(\Lambda_U\) which are not in \(P\). Since \(P\) is an induced subgraph on the vertices contained within its boundary, each of these edges must cross the boundary of \(P\). These edges must cross the relative interior of a boundary edge of \(P\) since all edge crossings in \(\Lambda_U\) occur in the relative interior of edges.
    
    Furthermore, every edge of \(\Lambda_U\) that crosses a boundary edge of \(P\) must be incident to a vertex in \(P\). Suppose there was an edge of \(\Lambda_U\) that crossed the boundary of \(P\) and was not incident with a vertex of \(P\). Such an edge must cross two boundary edges of \(P\), implying the edge must be long. If the two boundary edges are incident with the same vertex, that vertex must have an interior angle of \(60^\circ\). If the two boundary edges are vertex disjoint they must be parallel long edges corresponding to opposite sides of \(P\). It is easy to see that such a \(P\) must have a boundary vertex with interior angle less than $90^\circ$.
    
    So to count the deficit of vertices not on the boundary, we only need to count the number of edges in \(\Lambda_U\) that cross boundary edges of \(P\). Observe that each short boundary edge is crossed once, and each long boundary edge is crossed \(5\) times. This implies the deficit of the vertices not on the boundary of \(P\) is \(b_u+5b_t\), which together with the deficit of \(5b+12\) coming from boundary vertices, implies the lemma.
\end{proof}

\subsection{Proof of Theorem \ref{the:P}}
\subsubsection{Base Cases}

We prove Theorem \ref{the:P} by induction on \(n\), starting with the following base cases.

\begin{claim}\label{cla:nleq30}
    For any \(P\in \mathscr{P}\) with \(n\) vertices and \(e\) edges, if \(n\in\{3,4,\dots, 30\}\) then \(e\leq e(n)\).
\end{claim}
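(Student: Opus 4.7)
The plan is to verify this claim by finite enumeration, the mechanical content of which is Algorithm~\ref{alg:basecase}. By Definition~\ref{def:uiti}, each $P\in\mathscr{P}$ is determined, up to translation, by its 12-tuple of side-length parameters $(u_1,\dots,u_6,t_1,\dots,t_6)$ of non-negative integers, and by Lemma~\ref{lem:uiticlose} any such tuple arising from an actual polygon must satisfy two linear closure relations. Since the total count $\sum_i u_i+\sum_i t_i$ of boundary edges is an upper bound on the number of boundary vertices and hence on $n$, the restriction $n\leq 30$ forces every individual parameter to be bounded by a small constant; in particular the whole search space is small.

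For each candidate 12-tuple I would first test the closure relations of Lemma~\ref{lem:uiticlose}, then compute $n$ from the formula of Lemma~\ref{lem:nuiti} and discard tuples with $n\notin\{3,\dots,30\}$. For each surviving tuple I would explicitly reconstruct the vertex set $V(P)\subset\Lambda$ by walking around the boundary with the prescribed side lengths and filling in the enclosed lattice points, then compute $e=|E(\Lambda_U[V(P)])|$ directly by iterating over unordered pairs of vertices and checking whether their difference lies in $U$. Computing $e$ directly rather than via Lemma~\ref{lem:bubt} sidesteps any issue with the interior-angle hypothesis of that lemma, which can fail for heavily degenerate 12-gons (those with several consecutive zero sides) that genuinely occur at the small end of this range. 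The inequality $e\leq e(n)$ is then checked against the piecewise definition of $e(n)$ from Theorem~\ref{the:e}, using the exceptional branch $6n-4\sqrt{6n-6}$ at $n=7$ (the only value of the form $24k^2-24k+7$ in $\{3,\dots,30\}$) and the floor branch elsewhere.

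The main obstacle is purely computational bookkeeping: one must enumerate efficiently despite the nominally 12-dimensional parameter space. A convenient strategy is to iterate over $(u_1,t_1,\dots,u_5,t_5)$ in nested loops, pruning as soon as the partial contribution to $n$ predicted by Lemma~\ref{lem:nuiti} exceeds $30$, and then to solve for $(u_6,t_6)$ from the two relations of Lemma~\ref{lem:uiticlose}, accepting the pair only if it consists of non-negative integers. With such pruning the entire enumeration fits comfortably inside a short Python script, and the claim reduces to reading off, for each $n\in\{3,\dots,30\}$, the maximum $e$ over the accepted tuples and comparing it with $e(n)$. As a byproduct I would also record the tuples attaining equality; for $n=7$ this should yield the unique (up to translation) extremal polygon $P_1$ from Figure~\ref{fig:extremal}, which will be needed both in the inductive step and in the later uniqueness argument for $n=24k^2-24k+7$.
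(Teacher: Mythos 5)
Your approach is legitimate but departs substantially from the paper's. The paper does not perform a full enumeration over $n\in\{3,\dots,30\}$: it first uses a small finite induction, observing that $P$ with $n\leq 30$ always has a boundary vertex of degree at most $5$ (since otherwise $P$ would contain the $12$-gon with all $u_i=t_i=1$ and hence $n\geq 31$), so whenever $e(n)-e(n-1)\geq 5$ one may delete that vertex, apply the inductive hypothesis at $n-1$, and be done. This dispatches all but $n\in\{3,4,5,6,8,9,11,13,15,20\}$; the first few are handled by inspection, and for the six remaining values the paper further reduces to the case that every boundary vertex has degree at least $4$ (so the interior-angle hypothesis of Lemma~\ref{lem:bubt} holds), after which Algorithm~\ref{alg:basecase} enumerates tuples and applies the closed-form formula of Lemma~\ref{lem:bubt} for $\partial(V(P))$, never reconstructing the vertex set at all. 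Your proposal is a direct, brute-force enumeration over all thirty values, reconstructing $V(P)$ explicitly and counting edges pair by pair. This is more elementary and self-contained --- as you correctly point out, it sidesteps the $90^\circ$ interior-angle hypothesis of Lemma~\ref{lem:bubt}, which the paper has to reinstate via the boundary-degree argument --- at the cost of a larger (though still tractable) computation and of not matching the algorithm cited. You should not describe Algorithm~\ref{alg:basecase} as ``the mechanical content'' of your plan, since that algorithm only treats six values of $n$ under an extra hypothesis.

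One small but real error in the write-up: you assert that ``the total count $\sum_i u_i+\sum_i t_i$ of boundary edges is an upper bound on \dots $n$.'' The implication goes the other way: boundary vertices are a subset of $V(P)$, so $\sum_i u_i + \sum_i t_i \leq n$ (the sum counts boundary lattice points, and there may additionally be interior ones). Your intended conclusion --- that $n\leq 30$ bounds each $u_i,t_i$ and hence truncates the search --- follows from this corrected inequality, so the enumeration strategy survives, but as written the justification is reversed. For the pruning step, note also that the formula of Lemma~\ref{lem:nuiti} involves subtracted binomial terms, so ``the partial contribution to $n$'' from a prefix of parameters is not monotone; the clean prune is precisely $\sum u_i+\sum t_i\leq n\leq 30$ applied to the partial sums.
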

\begin{proof}
    First we claim \(P\) must have a vertex of degree at most \(5\) in its boundary. If all boundary vertices had a degree of at least \(6\), then the boundary of \(P\) would have no degenerate sides. This implies \(P\) contains a \(12\)-gon with \(u_i=t_i=1\) \((i=1,\dots, 6)\), which implies \(n\geq 31\).
    
    We now prove the claim by finite induction. The case \(n\) equals \(3\) or \(4\) is trivial. If \(e(n)-e(n-1)\geq 5\) we are done, as by removing the boundary vertex of degree \(5\), we shift at least one supporting half-plane corresponding to this degenerate side inward. It is a simple case analysis to confirm the resulting graph is in \(\mathscr{P}\). One case is depicted in Figure~\ref{fig:degenremoveal}. Applying the inductive hypothesis after removing this vertex we obtain \(e\leq e(n-1)+5\leq e(n).\)

    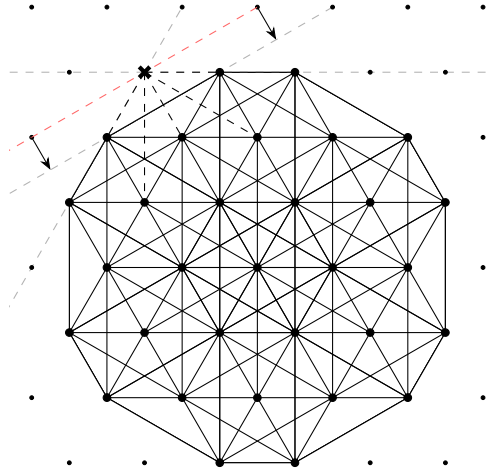
\begin{figure}
    \centering

\begin{tikzpicture}[>=Stealth]
\def\points{(e1),(e2),($(e1)+(e2)$),($2*(e2)-(e1)$),($(e2)-(e1)$), ($-2*(e1)+(e2)$),($-1*(e1)$),($-1*(e2)$),($-1*(e1)-(e2)$),($-2*(e2)+(e1)$),($-1*(e2)+(e1)$), ($2*(e1)-(e2)$)}
\def\bpoints{($2*(e1)-(e3)$), ($2*(e1)+(e2)$), ($2*(e2)+(e1)$), ($2*(e2)+(e3)$), ($2*(e3)+(e2)$), ($2*(e3)-(e1)$), ($-2*(e1)+(e3)$), ($-2*(e1)-(e2)$), ($-2*(e2)-(e1)$), ($-2*(e2)-(e3)$), ($-2*(e3)-(e2)$), ($-2*(e3)+(e1)$) }
\def\ipoints{(0,0), (e1), (e2), (e3), ($-1*(e1)$), ($-1*(e2)$), ($-1*(e3)$)}
\coordinate (e1) at (1,0);
\coordinate (e2) at (60:1);
\coordinate (e3) at (120:1);
\begin{scope}[on background layer]
\clip ($2*(e1)-(e3)$) --++ ($(e2)+(e3)$) --++ ($(e3)$) --++ ($-1*(e1)+(e3)$) --++ ($-1*(e1)$) --++ ($-1*(e1)-(e2)$) --++ ($-1*(e2)$) --++ ($-1*(e2)-(e3)$) --++ ($-1*(e3)$) --++ ($(e1)-(e3)$) --++ ($(e1)$) --++ ($(e1)+(e2)$) -- cycle;
\foreach \x in {-3,...,3} 
    {
    \foreach \y in {-3,...,3}
    {
    \coordinate (cur) at ($\x*(e1)+\y*(e2)$);
    \filldraw[draw=black] (cur) circle [radius=0.05];
    }
    }
\foreach \b in \bpoints {
    \foreach \p in \points{
        \draw \b -- +\p;
        }
    }
\end{scope}
\clip (-3.3,-3) rectangle (3.1,3.5);
\draw [dashed, gray!60!white] ($3*(e3)$) --($3*(e3)-5*(e2)$);
\draw [dashed, gray!60!white] ($3*(e3)$) --($3*(e3)+5*(e2)$);
\draw [dashed, gray!60!white] ($3*(e3)$) --($3*(e3)-5*(e1)$);
\draw [dashed, gray!60!white] ($3*(e3)$) --($3*(e3)+5*(e1)$);
\draw [dashed, gray!60!white] ($3*(e3)+2*(e1)+(e2)$) --($3*(e3)-5*(e2)-4*(e1)$);

\foreach \x in {-7,...,7} 
    {
    \foreach \y in {-7,...,7}
    {
    \coordinate (cur) at ($\x*(e1)+\y*(e2)$);
    \filldraw[draw=black] (cur) circle [radius=0.025];
    }
    }
\foreach \i in  \ipoints {
    \foreach \p in \points{
    \draw \i -- +\p;
    }
    }
\draw ($2*(e1)$) -- ($2*(e2)$) -- ($2*(e3)$) -- ($-2*(e1)$)-- ($-2*(e2)$) -- ($-2*(e3)$) -- cycle;

\draw ($2*(e1)-(e3)$) --++ ($(e2)+(e3)$) --++ ($(e3)$) --++ ($-1*(e1)+(e3)$) --++ ($-1*(e1)$) --++ ($-1*(e1)-(e2)$) --++ ($-1*(e2)$) --++ ($-1*(e2)-(e3)$) --++ ($-1*(e3)$) --++ ($(e1)-(e3)$) --++ ($(e1)$) --++ ($(e1)+(e2)$) -- cycle;

\draw ($(e1)+(e2)$) -- ($(e2)+(e3)$) -- ($(e3)-(e1)$) -- ($-1*(e1)-(e2)$)-- ($-1*(e2)-(e3)$) -- ($-1*(e3)+(e1)$) -- cycle;
\foreach \p in \bpoints 
    {
    \filldraw[draw=black] \p circle [radius=0.05];
    }
\draw [dashed, red!60!white] ($3*(e3)+(e1)+(e2)$) --($3*(e3)-5*(e2)-5*(e1)$);
\draw[ultra thick] ($3*(e3)+(45:0.1)$) --($3*(e3)+(225:0.1)$);
\draw[ultra thick] ($3*(e3)+(135:0.1)$) --($3*(e3)+(315:0.1)$);
\draw [dashed] ($3*(e3)$) --($3*(e3)+(e1)$);
\draw [dashed] ($3*(e3)$) --($3*(e3)+(e1)-(e3)$);
\draw [dashed] ($3*(e3)$) --($3*(e3)-(e3)$);
\draw [dashed] ($3*(e3)$) --($2*(e3)-(e2)$);
\draw [dashed] ($3*(e3)$) --($3*(e3)-(e2)$);
\draw [->] ($2*(e2)+2*(e3)$) -- +($-0.5*(e3)$);
\draw [->] ($-2*(e1)+2*(e3)$) -- +($-0.5*(e3)$);
\end{tikzpicture} 
    \caption{\(P\) after removal of a degenerate side remains in \(\mathscr{P}\)}
    \label{fig:degenremoveal}
    \end{figure}

    It is easy to check \(e(n)-e(n-1)\geq 5\) for all \(n\geq 3\) except when \(n\in \{3,4,5,6,8,9,11,13,15,20\}\). Since \(n\) equals \(3\) or \(4\) is done, we just have the remaining eight cases to check. \(n=5\) and \(6\) are easy. For the remaining cases we have \(e(n)-e(n-1)\geq 3\), so if we have a boundary vertex with degree at most \(3\) we are done by induction. So suppose each boundary vertex of \(P\) has a minimum degree of \(4\), this implies the interior angle of each boundary vertex is at least \(90^\circ\) allowing us to use Lemma \ref{lem:bubt}. For each \(n\in \{8,9,11,13,15,20\}\) we compute the side lengths of each \(P\in \mathscr{P}\) with \(n\) vertices using Lemma \ref{lem:nuiti} and Lemma \ref{lem:uiticlose}. For each \(P\) we compute the edge boundary using Lemma \ref{lem:bubt}, and confirm \(\partial(V(P))\geq 2\lceil\sqrt{96n-63}\rceil\) which implies \(e\leq e(n)\). Algorithm \ref{alg:basecase} states the pseudocode for this computation. The Python implementation can be found attached to this arXiv submission under the file name \texttt{Base\_case.py}.
\begin{table}
\centering\small
\begin{tabular}{c@{\;\;}c@{\;\;}c|c@{\;\;}c@{\;\;}c|c@{\;\;}c@{\;\;}c}
    $n$&$e(n)$&$e(n)-e(n-1)$&$n$&$e(n)$&$e(n)-e(n-1)$&$n$&$e(n)$&$e(n)-e(n-1)$ \\[0.5mm]
    \hline
    $3$&$3$&&$21$&$81$&$5$&$39$&$173$&$5$ \\
$4$&$6$&$3$&$22$&$86$&$5$&$40$&$178$&$5$ \\
$5$&$9$&$3$&$23$&$91$&$5$&$41$&$183$&$5$ \\
$6$&$13$&$4$&$24$&$96$&$5$&$42$&$189$&$6$ \\
$7$&$18$&$5$&$25$&$101$&$5$&$43$&$194$&$5$ \\
$8$&$21$&$3$&$26$&$106$&$5$&$44$&$199$&$5$ \\
$9$&$25$&$4$&$27$&$111$&$5$&$45$&$204$&$5$ \\
$10$&$30$&$5$&$28$&$116$&$5$&$46$&$210$&$6$ \\
$11$&$34$&$4$&$29$&$121$&$5$&$47$&$215$&$5$ \\
$12$&$39$&$5$&$30$&$126$&$5$&$48$&$220$&$5$ \\
$13$&$43$&$4$&$31$&$132$&$6$&$49$&$225$&$5$ \\
$14$&$48$&$5$&$32$&$137$&$5$&$50$&$231$&$6$ \\
$15$&$52$&$4$&$33$&$142$&$5$&$51$&$236$&$5$ \\
$16$&$57$&$5$&$34$&$147$&$5$&$52$&$241$&$5$ \\
$17$&$62$&$5$&$35$&$152$&$5$&$53$&$247$&$6$ \\
$18$&$67$&$5$&$36$&$157$&$5$&$54$&$252$&$5$ \\
$19$&$72$&$5$&$37$&$162$&$5$&$55$&$258$&$6$ \\
$20$&$76$&$4$&$38$&$168$&$6$&&& \\

\end{tabular}
\caption{The values of $e(n)$ and $e(n)-e(n-1)$ up to $n=55$}\label{tableBC}
\end{table}

\begin{algorithm}
\caption{The algorithm that computes the remaining base cases}\label{alg:basecase}
\begin{algorithmic}
\For{\(n \text{ in } [8,9,11,13,15,20]\)}
    \State \(\partial\text{\_bound}\gets2\lceil\sqrt{96n-63}\rceil\) \State \Comment{Want to show for all \(n\)-vertex \(P\in \mathscr{P}\), \(\partial(V(P))\geq \partial\text{\_bound}\).}
    \State \(b\text{\_bound}\gets \lceil(\partial\text{\_bound}-12)/6\rceil\)
    \State \Comment{By Lemma \ref{lem:bubt}, \(\partial(V(P))\geq \partial\text{\_bound}\) if \(b\geq b\text{\_bound}\).}

    \State \(\text{cases}\gets[(u_1,u_2,\dots,u_6,t_1,t_2,\dots,t_6):\text{satisfying \ref{con:1}, \ref{con:2}, \ref{con:3}, \ref{con:4}, \ref{con:5} and \ref{con:6}} ]\)
    \begin{enumerate}
        \item \(u_i,t_i\geq 0\), \((i=1,\dots, 6)\).\label{con:1}
        \item  \(\sum_{i=1}^6 u_i+t_i<b\text{\_bound}\). \label{con:2}
        \item \(0=u_1-u_4+t_1-t_4-t_2+t_5-u_3+u_6-2t_3+2t_6\).\label{con:3}
        \item \(0=t_1-t_4+u_2-u_5+2t_2-2t_5+u_3-u_6+t_3-t_6\).\label{con:4}
        \item \(n = (t_2+2t_3+t_4+u_3+u_4+1)(t_1+2t_2+t_3+u_2+u_3+1)\)
        
        \(\displaystyle\qquad-\binom{t_2+t_3+u_3+1}{2}-\binom{t_5+t_6+u_6+1}{2}-\sum_{i=1}^{6}{\binom{t_i+1}{2}}.\)\label{con:5}
        \item  \(\nexists i\in \{1,\dots, 6\}\) where \(0=u_i=t_i=u_{(i+1\mod{6})}\) or \(0=t_i=u_{(i+1\mod{6})}=t_{(i+1\mod{6})}\).\label{con:6}
    \end{enumerate}

    \Comment{Conditions \ref{con:1} and \ref{con:2} ensures \(P\) has non-negative sides and \(b<b\text{\_bound}\).}
    
    \Comment{Conditions \ref{con:3}, \ref{con:4} and \ref{con:5} ensures \(P\) satisfies Lemmas \ref{lem:nuiti} and \ref{lem:uiticlose}.}
    
    \Comment{Condition \ref{con:6} ensures interior angles on the boundary are at least \(90^\circ\).}

    \For{\((u_1,u_2,\dots,u_6,t_1,t_2,\dots,t_6)\) in cases}
        \State \(\partial(V(P))\gets \sum_{i=1}^6(6u_i+10t_i)+12\) \State \Comment{Computes \(\partial(V(P))\) according to Lemma \ref{lem:bubt}.}
        \If{\(\partial(V(P))< \partial\text{\_bound}\)}{ \Return \((u_1,u_2,\dots,u_6,t_1,t_2,\dots,t_6)\)} 
        \EndIf 
    \EndFor
\EndFor
\end{algorithmic}    
\end{algorithm}

\end{proof}

\subsubsection{Inductive step}
With the base cases done, we are set to prove Theorem \ref{the:P} by induction on \(n\). For the rest of this subsection, let \(P\in \mathscr{P}\) have \(n\geq 31\) vertices, \(e\) edges, and \(b\) boundary edges. Additionally, suppose \(P\) has the greatest number of edges out of all \(n\)-vertex members of \(\mathscr{P}\). The inductive hypothesis is that any \(P'\in \mathscr{P}\) with \(n'\) vertices, where \(3\leq n'<n\), has at most \(e(n')\) edges.

\begin{claim}\label{cla:deg5P}
    If \(P\) has a degenerate side, then \(e\leq 6n-\sqrt{96n-63}\). 
\end{claim}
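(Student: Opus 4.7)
My plan is to convert the claim into a concrete polynomial inequality in the twelve side-length parameters $u_i, t_i$ and verify it under the hypothesis that at least one parameter vanishes. By \eqref{eq:eindef}, the desired bound $e \leq 6n - \sqrt{96n-63}$ is equivalent to $\partial(V(P))^2 \geq 4(96n - 63)$, so I would work with this squared form throughout to avoid radicals.

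First, I would check whether Lemma~\ref{lem:bubt} applies. An interior angle of $P$ drops below $90^\circ$ only when three or more consecutive sides in the cyclic list $u_1, t_1, u_2, t_2, \dots, u_6, t_6$ simultaneously vanish; in such configurations $P$ collapses to a low-complexity shape (a hexagon in one of several orientations, a rhombus, a segment), and the inequality can be verified in each case by a direct edge-count analogous to the base-case reasoning of Claim~\ref{cla:nleq30}. In the remaining generic case, every interior angle is at least $90^\circ$, Lemma~\ref{lem:bubt} applies to give $\partial(V(P)) = 6b_u + 10b_t + 12$, and the claim becomes
\[
(3b_u + 5b_t + 6)^2 \geq 96n - 63,
\]
with $n$ expanded via Lemma~\ref{lem:nuiti} and the parameters constrained by Lemma~\ref{lem:uiticlose} together with the hypothesis that at least one $u_i$ or $t_i$ equals zero. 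Using the $12$-fold dihedral symmetry of $U$, I would reduce the single-zero case to two representatives (some $u_i=0$ and some $t_i=0$), and verify the inequality in each.

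The main obstacle is the polynomial inequality itself. Since the equality in Theorem~\ref{the:P} is attained by the balanced configuration $u_i = k$, $t_i = k-1$ (which has no zero parameter), I expect forcing any single $u_i$ or $t_i$ to zero to produce a strictly positive defect in $(3b_u+5b_t+6)^2 - (96n-63)$; the challenge is extracting a clean algebraic certificate of this, likely via completion of squares after substituting the relations from Lemma~\ref{lem:uiticlose}. For configurations with multiple simultaneous zeros the algebra can become unwieldy, and I would fall back on a symbolic verification over the finitely many symmetry-reduced degeneracy patterns (analogous to the use of sympy in Algorithm~\ref{alg:IS}) to close out the remaining cases.
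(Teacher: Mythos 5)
The paper proves this claim with a short inductive argument that your proposal misses entirely: since a degenerate side of $P$ produces a boundary vertex of degree at most $5$, one deletes that vertex to obtain an $(n-1)$-vertex member of $\mathscr{P}$, applies the inductive hypothesis of Theorem~\ref{the:P} (we are inside the induction on $n$, with $n\geq 31$), and concludes $e\leq e(n-1)+5\leq 6n-\sqrt{96n-63}$, the last inequality being an easy estimate for $n\geq 31$. Your proposal does not invoke the inductive hypothesis at all and instead tries to establish the bound directly from Lemmas~\ref{lem:nuiti}, \ref{lem:uiticlose} and~\ref{lem:bubt}, which is a genuinely different — and much harder — route.

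The main gap is that the fallback to ``symbolic verification over the finitely many symmetry-reduced degeneracy patterns'' is not actually a finite check. Fixing a single parameter $u_i$ or $t_i$ to zero (or fixing three consecutive parameters to zero) leaves the remaining parameters unbounded; after substituting Lemma~\ref{lem:nuiti} and the two linear relations of Lemma~\ref{lem:uiticlose}, the inequality $(3b_u+5b_t+6)^2\geq 96n-63$ becomes a quadratic inequality in roughly nine free nonnegative-integer variables, subject only to $n\geq 31$. This is a constrained multi-variable optimisation problem, not something one can run a sympy loop over as in Algorithm~\ref{alg:IS}, where boundedness of $d_u^*+d_t^*<18$ was essential. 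Nothing in your outline actually certifies that the ``defect'' is nonnegative over this infinite family; the hexagon $u_i=k$, $t_i=0$ already shows the defect is \emph{negative} for $k\leq 2$, so the proof would also have to carefully exploit $n\geq 31$, which the sketch does not address. In contrast, the paper sidesteps all of this by simply peeling off a low-degree boundary vertex and letting the inductive hypothesis do the work.
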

\begin{proof}
    Suppose \(P\) has a degenerate side, then \(P\) has a boundary vertex with degree at most \(5\). The resulting graph after removal of this vertex remains in \(\mathscr{P}\), which we can then apply the inductive hypothesis to. Since \(n\geq 31\), \(6n-\sqrt{96n-63}-e(n-1)\geq 5\) which implies \(e\leq e(n-1)+5\leq 6n-\sqrt{96n-63}\).
\end{proof}

So we may assume \(P\) does not have a degenerate side. Moreover, we may assume there is no \(n\)-vertex \(P'\in \mathscr{P}\) with a degenerate side and \(e\) edges. Since \(P\) does not have a degenerate side, \(u_i,t_i\geq 1\) \((i=1,\dots, 6)\).

\begin{claim}\label{cla:maxmin}
    There exists an \(n^*\)-vertex \(P^*\in \mathscr{P}\) with \(e^*\) edges and parameters \(u_i^*, t_i^*\geq 1\) \((i=1,\dots,6)\) with the following properties:
    
    \begin{gather*}
    \max\{u_1^*,u_2^*,\dots, u_6^*\}-3\leq \min \{t_1^*,t_2^*,\dots, t_6^*\},\\
    n^*\leq n+\max\{u_1^*,u_2^*,\dots, u_6^*\},\\
    \text{and if }e^*\leq e(n^*) \text{ then }e\leq e(n).
    \end{gather*}

    \end{claim}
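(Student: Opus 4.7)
My plan is to construct $P^*$ by an iterative balancing procedure applied to $P$. Starting from $P_0 := P$, while the current polygon $P_i$ admits indices $s, r$ with $u_s^{(i)} - t_r^{(i)} \geq 4$, apply a local balancing operation that produces a new polygon $P_{i+1} \in \mathscr{P}$, still with all parameters $\geq 1$, but with strictly smaller imbalance $\max_s u_s^{(i+1)} - \min_r t_r^{(i+1)}$. The process terminates once the balance condition is met, at which point we set $P^* := P_k$.

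The balancing step itself is defined by a case analysis. Using the $12$-fold dihedral symmetry of $\Lambda_U$, I reduce to a small number of canonical positions of the offending pair $(s, r)$ in the cyclic ordering of the $12$ sides. In each canonical case the step consists of extending the polygon by one lattice step along the side $t_r$ (increasing $t_r$ by one) and performing compensating modifications on nearby sides in order to preserve the two closure identities of Lemma~\ref{lem:uiticlose}. I then verify that the resulting $P_{i+1}$ indeed lies in $\mathscr{P}$ with all parameters positive, and that the imbalance strictly decreases, so that the procedure halts in finitely many steps.

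The critical invariant maintained throughout the procedure is the edge-count transfer inequality
\[
e(P_{i+1}) - e(n_{i+1}) \;\geq\; e(P_i) - e(n_i),
\]
where $n_j := |V(P_j)|$. Iterating this yields $e^* - e(n^*) \geq e - e(n)$, which establishes the required implication: if $e^* \leq e(n^*)$ then $e \leq e(n)$. To verify the invariant at each step, I compute $e(P_{i+1}) - e(P_i)$ in closed form using Lemma~\ref{lem:bubt} (as a linear function of the changes in $b_u$ and $b_t$), and compare it to $e(n_{i+1}) - e(n_i)$ via the explicit formula for $e(n)$. The vertex bound $n^* \leq n + \max_i u_i^*$ is obtained by summing $n_{i+1} - n_i$ over all iterations and showing that the total growth telescopes against the final maximum side length, after grouping consecutive balancing steps that act on the same side.

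The principal obstacle will be the case analysis for the balancing step itself: one must design the operation for each canonical position of the imbalanced pair $(s, r)$, verify that the closure identities of Lemma~\ref{lem:uiticlose} remain satisfied, check that all parameters stay $\geq 1$, and establish the transfer inequality in each case. The transfer inequality is the most delicate part, since $e(n)$ has a piecewise definition with a special \emph{regular} case $n = 24k^2 - 24k + 7$, and one must handle the crossover between the two regimes carefully when a balancing step moves the vertex count past a regular value. This is likely where the author's ``exact symbolic computations using sympy'' (Algorithm~\ref{alg:IS}) are invoked.
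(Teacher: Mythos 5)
Your proposal captures the broad strategy — iteratively balance the $u_i$ against the $t_i$ until the imbalance condition holds — but the mechanics you propose differ from the paper's in ways that create genuine gaps.

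The paper's balancing step is a \emph{rearrangement}, not an \emph{extension}: it removes the $t_j+1$ vertices on the shortest $t$-side (which deletes $6(t_j+1)-1$ edges) and places those same vertices on the longest $u$-side (which recreates at least $6(t_j+1)-1$ edges, because $u_i-4\geq t_j$ gives room to place them with full degree). Crucially this keeps $n$ \emph{fixed} throughout the process, with one possibly partially filled side as the only bookkeeping anomaly. Because $n$ is fixed, the only invariant needed is ``the number of edges does not decrease'' — a far easier invariant to certify than your proposed transfer inequality $e(P_{i+1})-e(n_{i+1})\geq e(P_i)-e(n_i)$, which requires tracking the piecewise, floor-valued function $e(\cdot)$ across every step. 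Your outward-extension operation would add a full new row of vertices at each step, and the corner vertices of such a row necessarily have degree below six, so your transfer inequality would generically fail. There is also a sign confusion: increasing $t_r$ corresponds geometrically to shifting the $t_r$-supporting half-plane \emph{inward}, i.e.\ removing a row, not ``extending the polygon.''

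Two further consequences of the difference are load-bearing. First, the paper does \emph{not} keep every intermediate object inside $\mathscr{P}$: after the rearrangement, one side may be only partially filled, and the paper explicitly works with these non-members of $\mathscr{P}$, also handling the case where a degenerate side ($u_k=0$) appears en route (by moving a vertex from the degenerate side to the partial side, strictly increasing the edge count). Your requirement that every $P_{i+1}\in\mathscr{P}$ with all parameters $\geq 1$ rules out exactly the intermediate states the paper needs. Second, the bound $n^*\leq n+\max\{u_1^*,\dots,u_6^*\}$ is immediate in the paper — only the single final fill-in of the one partial side adds vertices, and its length is at most $\max u_i^*$. Your ``telescoping'' argument for the same bound is not justified: with vertices added at every step and an imbalance that may be large initially, the total growth need not be controlled by a single side length. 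Finally, Algorithm~\ref{alg:IS} in the paper is invoked for a different part of the inductive step (the cases with $b^*$ large and $d_u^*+d_t^*<18$, after Claim~\ref{cla:maxmin} is established), not for the construction of $P^*$, so that attribution is misplaced.
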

    
\begin{proof}
    If \(P\) has the property \(\max\{u_1,u_2,\dots, u_6\}-3\leq \min\{t_1,t_2,\dots, t_6\}\), we define \(P^*=P\). Otherwise, denote \(u_i=\max\{u_1,u_2,\dots, u_6\}\) and \(t_j=\min\{t_1,t_2,\dots, t_6\}\) and suppose  \(u_i-4\geq t_j\). As \(t_j\geq 1\) we must have \(u_i\geq 5\). Our approach is to first move the vertices of \(P\) around without decreasing the number of edges. We start by removing each vertex of \(P\) that makes up the side \(t_j\). There are \(t_j+1\) such vertices which when removed delete \(6(t_j+1)-1\) edges. We add these vertices back to \(P\) on the side \(u_i\). There are \(u_i-2\) vertices on this side to add and since we assumed \(u_i-4\geq t_j\) we can add each of the \(t_j+1\) vertices to the side \(u_i\), which creates \(6(t_j+1)-1\) edges. Figure~\ref{fig:ReArstart-2} depicts this process.

    \begin{figure}
    \centering
    \begin{subfigure}[t]{0.45\textwidth}
\begin{tikzpicture}[scale=0.45]
\def\points{(e1),(e2),($(e1)+(e2)$),($2*(e2)-(e1)$),($(e2)-(e1)$), ($-2*(e1)+(e2)$),($-1*(e1)$),($-1*(e2)$),($-1*(e1)-(e2)$),($-2*(e2)+(e1)$),($-1*(e2)+(e1)$), ($2*(e1)-(e2)$)}
\coordinate (e1) at (1,0);
\coordinate (e2) at (60:1);
\coordinate (e3) at (120:1);
\begin{scope}[on background layer]
\clip (-6.05,-5.5) rectangle (6.05,5);
\foreach \x in {-10,...,10} 
    {
    \foreach \y in {-10,...,10}
    {
    \coordinate (cur) at ($\x*(e1)+\y*(e2)$);
    \filldraw[draw=black] (cur) circle [radius=0.025];
    }
    }
\end{scope}
\begin{scope}
\clip ($5.1*(e1)$) -- ($3.1*(e2)+2.1*(e1)$) -- ($4.1*(e2)$) -- ($4.1*(e3)-1.1*(e1)$) -- ($3.1*(e3)-3*(e1)$) -- ($-5.1*(e1)+1.1*(e3)$) -- ($-5.1*(e1)-1.1*(e2)$) -- ($-4.1*(e2)-2.1*(e1)$) -- ($-5.1*(e2)$) -- ($-5.1*(e3)$) -- ($-4.1*(e3)+2.1*(e1)$) -- ($4.1*(e1)-2.1*(e3)$) -- cycle;
\filldraw[fill=gray!10!white]($5*(e1)$) -- ($3*(e2)+2*(e1)$) -- ($4*(e2)$) -- ($4*(e3)-(e1)$) -- ($3*(e3)-3*(e1)$) -- ($-5*(e1)+(e3)$) -- ($-5*(e1)-(e2)$) -- ($-4*(e2)-2*(e1)$) -- ($-5*(e2)$) -- ($-5*(e3)$) -- ($-4*(e3)+2*(e1)$) -- ($4*(e1)-2*(e3)$) -- cycle;
\foreach \x in {-7,...,7} 
    {
    \foreach \y in {-7,...,7}
    {
    \coordinate (cur) at ($\x*(e1)+\y*(e2)$);
    \filldraw[draw=black] (cur) circle [radius=0.05];
    }
    }
\end{scope}
\begin{scope}[every node/.style={scale=.45}]

\draw ($5*(e1)$) -- ($3*(e2)+2*(e1)$) node[midway, label={[label distance=0mm]180:$u_3=3$}] {};
\draw ($3*(e2)+2*(e1)$) -- ($4*(e2)$) node[midway, label={[label distance=1mm]180:$t_3=1$}] {};
\draw ($4*(e2)$) -- ($4*(e3)-(e1)$) node[midway, label={[label distance=0mm]270:$u_4=5$}] {};
\draw ($4*(e3)-(e1)$) -- ($3*(e3)-3*(e1)$) node[midway, label={[label distance=0mm]0:$t_4=1$}] {};
\draw ($3*(e3)-3*(e1)$) -- ($-5*(e1)+(e3)$) node[midway, label={[label distance=-1mm]300:$u_5=2$}] {};
\draw ($-5*(e1)+(e3)$) -- ($-5*(e1)-(e2)$) node[midway, label={[label distance=0mm]300:$t_5=1$}] {};
\draw ($-5*(e1)-(e2)$) -- ($-4*(e2)-2*(e1)$) node[midway, label={[label distance=0mm]0:$u_6=3$}] {};
\draw ($-4*(e2)-2*(e1)$) -- ($-5*(e2)$) node[midway, label={[label distance=0mm]0:$t_6=1$}] {};
\draw ($-5*(e2)$) -- ($-5*(e3)$) node[midway, label={[label distance=0mm]90:$u_1=5$}] {};
\draw ($-5*(e3)$) -- ($-4*(e3)+2*(e1)$) node[midway, label={[label distance=0mm]180:$t_1=1$}] {};
\draw ($-4*(e3)+2*(e1)$) -- ($4*(e1)-2*(e3)$) node[midway, label={[label distance=0mm]150:$u_2=2$}] {};
\draw ($4*(e1)-2*(e3)$) -- ($5*(e1)$) node[midway, label={[label distance=0mm]150:$t_2=1$}] {};
    
\end{scope}
\end{tikzpicture}
        \caption{Generic $P$}
    \end{subfigure}
    \hfill
    \begin{subfigure}[t]{0.45\textwidth}
\begin{tikzpicture}[scale=0.45]
\def\points{(e1),($(e1)+(e2)$),(e2),($2*(e2)-(e1)$),($(e2)-(e1)$), ($-2*(e1)+(e2)$),($-1*(e1)$),($-1*(e1)-(e2)$),($-1*(e2)$),($-2*(e2)+(e1)$),($-1*(e2)+(e1)$), ($2*(e1)-(e2)$)}
\coordinate (e1) at (1,0);
\coordinate (e2) at (60:1);
\coordinate (e3) at (120:1);
\begin{scope}[on background layer]
\clip (-6.05,-5.5) rectangle (6.05,5);
\foreach \x in {-10,...,10} 
    {
    \foreach \y in {-10,...,10}
    {
    \coordinate (cur) at ($\x*(e1)+\y*(e2)$);
    \filldraw[draw=black] (cur) circle [radius=0.025];
    }
    }
\end{scope}
\begin{scope}
\clip ($4.1*(e1)+1.1*(e2)$) -- ($3.1*(e2)+2.1*(e1)$) -- ($4.1*(e2)$) -- ($4.1*(e3)-1.1*(e1)$) -- ($3.1*(e3)-3*(e1)$) -- ($-5.1*(e1)+1.1*(e3)$) -- ($-5.1*(e1)-1.1*(e2)$) -- ($-4.1*(e2)-2.1*(e1)$) -- ($-5.1*(e2)$) -- ($-5.1*(e3)$) -- ($-4.1*(e3)+2.1*(e1)$) -- ($3.1*(e1)-3.1*(e3)$) -- cycle;
\filldraw[fill=gray!10!white]($4*(e1)+(e2)$) -- ($3*(e2)+2*(e1)$) -- ($4*(e2)$) -- ($4*(e3)-(e1)$) -- ($3*(e3)-3*(e1)$) -- ($-5*(e1)+(e3)$) -- ($-5*(e1)-(e2)$) -- ($-4*(e2)-2*(e1)$) -- ($-5*(e2)$) -- ($-5*(e3)$) -- ($-4*(e3)+2*(e1)$) -- ($3*(e1)-3*(e3)$) -- cycle;
\foreach \x in {-7,...,7} 
    {
    \foreach \y in {-7,...,7}
    {
    \coordinate (cur) at ($\x*(e1)+\y*(e2)$);
    \filldraw[draw=black] (cur) circle [radius=0.05];
    }
    }
\end{scope}
\begin{scope}[every node/.style={scale=.45}]

\draw ($4*(e1)+(e2)$) -- ($3*(e2)+2*(e1)$) node[midway, label={[label distance=0mm]210:$u_3=2$}] {};
\draw ($3*(e2)+2*(e1)$) -- ($4*(e2)$) node[midway, label={[label distance=1mm]180:$t_3=1$}] {};
\draw ($4*(e2)$) -- ($4*(e3)-(e1)$) node[midway, label={[label distance=0mm]270:$u_4=5$}] {};
\draw ($4*(e3)-(e1)$) -- ($3*(e3)-3*(e1)$) node[midway, label={[label distance=0mm]0:$t_4=1$}] {};
\draw ($3*(e3)-3*(e1)$) -- ($-5*(e1)+(e3)$) node[midway, label={[label distance=-1mm]300:$u_5=2$}] {};
\draw ($-5*(e1)+(e3)$) -- ($-5*(e1)-(e2)$) node[midway, label={[label distance=0mm]300:$t_5=1$}] {};
\draw ($-5*(e1)-(e2)$) -- ($-4*(e2)-2*(e1)$) node[midway, label={[label distance=0mm]0:$u_6=3$}] {};
\draw ($-4*(e2)-2*(e1)$) -- ($-5*(e2)$) node[midway, label={[label distance=0mm]0:$t_6=1$}] {};
\draw ($-5*(e2)$) -- ($-5*(e3)$) node[midway, label={[label distance=0mm]90:$u_1=5$}] {};
\draw ($-5*(e3)$) -- ($-4*(e3)+2*(e1)$) node[midway, label={[label distance=0mm]180:$t_1=1$}] {};
\draw ($-4*(e3)+2*(e1)$) -- ($3*(e1)-3*(e3)$) node[midway, label={[label distance=0mm]180:$u_2=1$}] {};
\draw ($3*(e1)-3*(e3)$) -- ($4*(e1)+(e2)$) node[midway, label={[label distance=2mm]150:$t_2=2$}] {};
    
\end{scope}

\def\atopoints{($(e2)-(e1)$), ($-2*(e1)+(e2)$),($-1*(e1)$),($-1*(e1)-(e2)$),($-1*(e2)$)}

\draw[thick] ($5*(e1)+(45:0.1)$) --($5*(e1)+(225:0.1)$);
\draw[thick] ($5*(e1)+(135:0.1)$) --($5*(e1)+(315:0.1)$);
\draw[thick] ($4*(e1)-2*(e3)+(135:0.1)$) --($4*(e1)-2*(e3)+(315:0.1)$);
\draw[thick] ($4*(e1)-2*(e3)+(45:0.1)$) --($4*(e1)-2*(e3)+(225:0.1)$);

\draw[dashed] ($5*(e1)$)--($4*(e1)-2*(e3)$);

\foreach \p in \atopoints
    {
    \draw[dashed] ($5*(e1)$)--+\p;
    \draw[dashed] ($4*(e1)-2*(e3)$)--+\p;
    }
    
\def\btopoints{($-1*(e1)-(e2)$),($-1*(e2)$),($-2*(e2)+(e1)$),($-1*(e2)+(e1)$), ($2*(e1)-(e2)$)}

\filldraw[draw=black] ($4*(e3)+(e2)$) circle [radius=0.05];
\filldraw[draw=black] ($3*(e3)+2*(e2)$) circle [radius=0.05];

\draw[] ($3*(e3)+2*(e2)$)--($4*(e3)+(e2)$);

\foreach \p in \btopoints
    {
    \draw[] ($3*(e3)+2*(e2)$)--+\p;
    \draw[] ($4*(e3)+(e2)$)--+\p;
    }
\end{tikzpicture}
        \caption{$P$ after}
    \end{subfigure}
    \caption{Removing vertices of $t_j=t_2$ and adding them to $u_i=u_4$}
    \label{fig:ReArstart-2}
\end{figure}
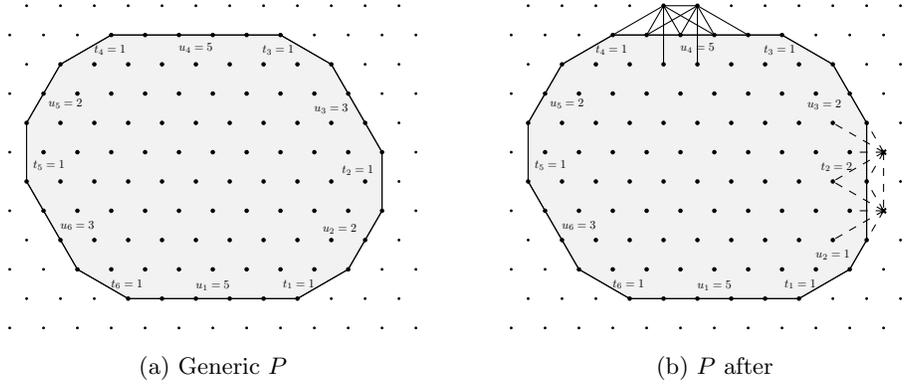

    By removing the vertices on side \(t_j\), we have increased \(t_j\) by \(1\) and decreased the two adjacent sides, \(u_j\) and \(u_{j+1}\), by \(1\). By adding vertices to the side \(u_i\), if the side was filled in completely, we would have decreased the side \(u_i\) by \(3\) and increased the two adjacent sides, \(t_{i-1}\) and \(t_i\), by \(1\). The only case we completely fill in the side \(u_i\), is when \(u_i\) and \(t_j\) are adjacent, and \(u_i=t_j-4\). Even though in most cases we will not completely fill in the side \(u_i\), when we complete the process described above, we update our parameters \(u_i,t_i\) \((i=1,\dots, 6)\) as if we had completely filled in the side \(u_i\). Namely 
    \[
    t_j=t_j+1, u_j=u_j-1, u_{j+1}=u_{j+1}-1,
    u_i=u_i-3, t_{i-1}=t_{i-1}+1,t_i=t_i+1. 
    \]
    
    Notice by updating these parameters we have strictly decreased \(\sum_{k=1}^{6}{u_k}\) and strictly increased \(\sum_{k=1}^{6}{t_k}\).
    So if we repeat this process, after a finite number of times, we will obtain \(\max \{u_1,u_2,\dots, u_6\}-3\leq \min \{t_1,t_2,\dots, t_6\}\).
    The only problem is that when we do this process once, we may have transformed \(P\) so that one of its sides is partially-filled, implying \(P\notin \mathscr{P}\).
    Figure~\ref{fig:Rear3} depicts the updated parameters where \(P\) has a side only partially-filled. 

    \begin{figure}
        \centering
\begin{tikzpicture}[scale=0.5]
\def\points{(e1),($(e1)+(e2)$),(e2),($2*(e2)-(e1)$),($(e2)-(e1)$), ($-2*(e1)+(e2)$),($-1*(e1)$),($-1*(e1)-(e2)$),($-1*(e2)$),($-2*(e2)+(e1)$),($-1*(e2)+(e1)$), ($2*(e1)-(e2)$)}
\coordinate (e1) at (1,0);
\coordinate (e2) at (60:1);
\coordinate (e3) at (120:1);
\filldraw[fill=gray!10!white]($4*(e1)+(e2)$) -- ($3*(e2)+2*(e1)$) -- ($3*(e2)+2*(e3)$) -- ($4*(e3)+(e2)$) -- ($3*(e3)-3*(e1)$) -- ($-5*(e1)+(e3)$) -- ($-5*(e1)-(e2)$) -- ($-4*(e2)-2*(e1)$) -- ($-5*(e2)$) -- ($-5*(e3)$) -- ($-4*(e3)+2*(e1)$) -- ($3*(e1)-3*(e3)$) -- cycle;
\begin{scope}[on background layer]
\clip (-6.05,-5.5) rectangle (6.05,5);
\foreach \x in {-10,...,10} 
    {
    \foreach \y in {-10,...,10}
    {
    \coordinate (cur) at ($\x*(e1)+\y*(e2)$);
    \filldraw[draw=black] (cur) circle [radius=0.025];
    }
    }
\end{scope}
\begin{scope}
\clip ($4.1*(e1)+1.1*(e2)$) -- ($3.1*(e2)+2.1*(e1)$) -- ($3*(e2)+2*(e3)$) -- ($4*(e3)+(e2)$) -- ($3.1*(e3)-3*(e1)$) -- ($-5.1*(e1)+1.1*(e3)$) -- ($-5.1*(e1)-1.1*(e2)$) -- ($-4.1*(e2)-2.1*(e1)$) -- ($-5.1*(e2)$) -- ($-5.1*(e3)$) -- ($-4.1*(e3)+2.1*(e1)$) -- ($3.1*(e1)-3.1*(e3)$) -- cycle;

\foreach \x in {-7,...,7} 
    {
    \foreach \y in {-7,...,7}
    {
    \coordinate (cur) at ($\x*(e1)+\y*(e2)$);
    \filldraw[draw=black] (cur) circle [radius=0.05];
    }
    }
\end{scope}
\begin{scope}[every node/.style={scale=.5}]

\draw ($4*(e1)+(e2)$) -- ($3*(e2)+2*(e1)$) node[midway, label={[label distance=0mm]210:$u_3=2$}] {};
\draw ($3*(e2)+2*(e1)$) -- ($3*(e2)+2*(e3)$) node[midway, label={[label distance=1mm]210:$t_3=2$}] {};
\draw ($3*(e2)+2*(e3)$) -- ($4*(e3)+(e2)$) node[midway, label={[label distance=0mm]270:$u_4=2$}] {};
\draw ($4*(e3)+(e2)$) -- ($3*(e3)-3*(e1)$) node[midway, label={[label distance=0mm]330:$t_4=2$}] {};
\draw ($3*(e3)-3*(e1)$) -- ($-5*(e1)+(e3)$) node[midway, label={[label distance=-1mm]300:$u_5=2$}] {};
\draw ($-5*(e1)+(e3)$) -- ($-5*(e1)-(e2)$) node[midway, label={[label distance=0mm]300:$t_5=1$}] {};
\draw ($-5*(e1)-(e2)$) -- ($-4*(e2)-2*(e1)$) node[midway, label={[label distance=0mm]0:$u_6=3$}] {};
\draw ($-4*(e2)-2*(e1)$) -- ($-5*(e2)$) node[midway, label={[label distance=0mm]0:$t_6=1$}] {};
\draw ($-5*(e2)$) -- ($-5*(e3)$) node[midway, label={[label distance=0mm]90:$u_1=5$}] {};
\draw ($-5*(e3)$) -- ($-4*(e3)+2*(e1)$) node[midway, label={[label distance=0mm]180:$t_1=1$}] {};
\draw ($-4*(e3)+2*(e1)$) -- ($3*(e1)-3*(e3)$) node[midway, label={[label distance=0mm]180:$u_2=1$}] {};
\draw ($3*(e1)-3*(e3)$) -- ($4*(e1)+(e2)$) node[midway, label={[label distance=2mm]150:$t_2=2$}] {};
    
\end{scope}

\def\btopoints{($-1*(e1)-(e2)$),($-1*(e2)$),($-2*(e2)+(e1)$),($-1*(e2)+(e1)$), ($2*(e1)-(e2)$)}

\filldraw[draw=black] ($4*(e3)+(e2)$) circle [radius=0.05];
\filldraw[draw=black] ($3*(e3)+2*(e2)$) circle [radius=0.05];
\filldraw[draw=black, thick, fill=white] ($2*(e3)+3*(e2)$) circle [radius=0.1];

\end{tikzpicture}
        \caption{Updated parameters of $P$ that is missing part of a side}
        \label{fig:Rear3}
    \end{figure}
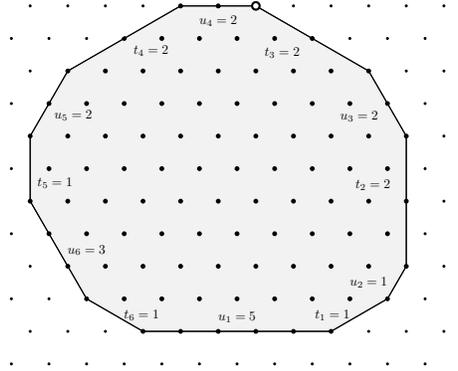

    This is surprisingly not a problem at the moment. When we repeat this process where \(P\) is a \(12\)-gon with one side only partially-filled, we identify \(t_{j}=\min\{t_1,t_2,\dots, t_6\}\) and \(u_i=\max\{u_1,u_2,\dots, u_6\}\). We remove the vertices on the side \(t_j\), then add these vertices to the partially-filled side to complete it before adding vertices to the side \(u_i\). By adding vertices to the partially-filled side, we add six edges per vertex so we still add at least \(6(t_j+1)-1\) edges back to \(P\). This is depicted in Figure~\ref{fig:ReAr4-5}. Note if \(t_j\) is adjacent to the partially-filled side, we shift the vertices of the partially-filled side so that \(t_j\) really has \(t_j\) many edges. 

    \begin{figure}
    \centering
    \begin{subfigure}[t]{0.45\textwidth}
\begin{tikzpicture}[scale=0.45]
\def\points{(e1),($(e1)+(e2)$),(e2),($2*(e2)-(e1)$),($(e2)-(e1)$), ($-2*(e1)+(e2)$),($-1*(e1)$),($-1*(e1)-(e2)$),($-1*(e2)$),($-2*(e2)+(e1)$),($-1*(e2)+(e1)$), ($2*(e1)-(e2)$)}
\coordinate (e1) at (1,0);
\coordinate (e2) at (60:1);
\coordinate (e3) at (120:1);
\filldraw[fill=gray!10!white]($4*(e1)+(e2)$) -- ($3*(e2)+2*(e1)$) -- ($3*(e2)+2*(e3)$) -- ($4*(e3)+(e2)$) -- ($3*(e3)-3*(e1)$) -- ($-5*(e1)+(e3)$) -- ($-5*(e1)-(e2)$) -- ($-4*(e2)-2*(e1)$) -- ($-5*(e2)$) -- ($-5*(e3)-(e1)$) -- ($3*(e1)-3*(e3)$) -- cycle;
\begin{scope}[on background layer]
\clip (-6.05,-5.5) rectangle (6.05,5);
\foreach \x in {-10,...,10} 
    {
    \foreach \y in {-10,...,10}
    {
    \coordinate (cur) at ($\x*(e1)+\y*(e2)$);
    \filldraw[draw=black] (cur) circle [radius=0.025];
    }
    }
\end{scope}
\begin{scope}
\clip ($4.1*(e1)+1.1*(e2)$) -- ($3.1*(e2)+2.1*(e1)$) -- ($3.1*(e2)+2.1*(e3)$) -- ($4.1*(e3)+1.1*(e2)$) -- ($3.1*(e3)-3*(e1)$) -- ($-5.1*(e1)+1.1*(e3)$) -- ($-5.1*(e1)-1.1*(e2)$) -- ($-4.1*(e2)-2.1*(e1)$) -- ($-5.1*(e2)$) -- ($-5.1*(e3)-1.1*(e1)$) -- ($3.1*(e1)-3.1*(e3)$) -- cycle;

\foreach \x in {-7,...,7} 
    {
    \foreach \y in {-7,...,7}
    {
    \coordinate (cur) at ($\x*(e1)+\y*(e2)$);
    \filldraw[draw=black] (cur) circle [radius=0.05];
    }
    }
\end{scope}
\begin{scope}[every node/.style={scale=.45}]

\draw ($4*(e1)+(e2)$) -- ($3*(e2)+2*(e1)$) node[midway, label={[label distance=0mm]210:$u_3=2$}] {};
\draw ($3*(e2)+2*(e1)$) -- ($3*(e2)+2*(e3)$) node[midway, label={[label distance=1mm]210:$t_3=2$}] {};
\draw ($3*(e2)+2*(e3)$) -- ($4*(e3)+(e2)$) node[midway, label={[label distance=0mm]240:$u_4=2$}] {};
\draw ($4*(e3)+(e2)$) -- ($3*(e3)-3*(e1)$) node[midway, label={[label distance=0mm]330:$t_4=2$}] {};
\draw ($3*(e3)-3*(e1)$) -- ($-5*(e1)+(e3)$) node[midway, label={[label distance=-1mm]300:$u_5=2$}] {};
\draw ($-5*(e1)+(e3)$) -- ($-5*(e1)-(e2)$) node[midway, label={[label distance=0mm]300:$t_5=1$}] {};
\draw ($-5*(e1)-(e2)$) -- ($-4*(e2)-2*(e1)$) node[midway, label={[label distance=0mm]0:$u_6=3$}] {};
\draw ($-4*(e2)-2*(e1)$) -- ($-5*(e2)$) node[midway, label={[label distance=0mm]0:$t_6=1$}] {};
\draw ($-5*(e2)$) -- ($-5*(e3)-(e1)$) node[midway, label={[label distance=0mm]90:$u_1=4$}] {};
\draw ($-5*(e3)-(e1)$) -- ($3*(e1)-3*(e3)$) node[midway, label={[label distance=3mm]160:$t_1=2$}] {};
\draw ($3*(e1)-3*(e3)$) node[label={[label distance=0mm]330:$u_2=0$}] {};
\draw ($3*(e1)-3*(e3)$) -- ($4*(e1)+(e2)$) node[midway, label={[label distance=2mm]150:$t_2=2$}] {};
    
\end{scope}
\filldraw[draw=black, thick, fill=white] ($2*(e3)+3*(e2)$) circle [radius=0.1];
    
\def\apoints{(e2),($2*(e2)-(e1)$),($(e2)-(e1)$), ($-2*(e1)+(e2)$),($-1*(e1)$)}

\draw[thick] ($2*(e1)-4*(e3)+(45:0.1)$) --($2*(e1)-4*(e3)+(225:0.1)$);
\draw[thick] ($2*(e1)-4*(e3)+(135:0.1)$) --($2*(e1)-4*(e3)+(315:0.1)$);
\draw[thick] ($-5*(e3)+(45:0.1)$) --($-5*(e3)+(225:0.1)$);
\draw[thick] ($-5*(e3)+(135:0.1)$) --($-5*(e3)+(315:0.1)$);

\foreach \p in \apoints
    {
    \draw[dashed] ($-5*(e3)$)--+\p;
    \draw[dashed] ($2*(e1)-4*(e3)$)--+\p;
    }
\draw[dashed] ($-5*(e3)$)--($2*(e1)-4*(e3)$);
\end{tikzpicture}
        \caption{Removal of $t_j=t_2$}
    \end{subfigure}
    \hfill
    \begin{subfigure}[t]{0.45\textwidth}
\begin{tikzpicture}[scale=0.45]
\def\points{(e1),($(e1)+(e2)$),(e2),($2*(e2)-(e1)$),($(e2)-(e1)$), ($-2*(e1)+(e2)$),($-1*(e1)$),($-1*(e1)-(e2)$),($-1*(e2)$),($-2*(e2)+(e1)$),($-1*(e2)+(e1)$), ($2*(e1)-(e2)$)}
\coordinate (e1) at (1,0);
\coordinate (e2) at (60:1);
\coordinate (e3) at (120:1);
\filldraw[fill=gray!10!white]($4*(e1)+(e2)$) -- ($3*(e2)+2*(e1)$) -- ($3*(e2)+2*(e3)$) -- ($4*(e3)+(e2)$) -- ($3*(e3)-3*(e1)$) -- ($-5*(e1)+(e3)$) -- ($-5*(e1)-(e2)$) -- ($-4*(e2)-2*(e1)$) -- ($-5*(e2)$) -- ($-5*(e3)-(e1)$) -- ($3*(e1)-3*(e3)$) -- cycle;
\begin{scope}[on background layer]
\clip (-6.05,-5.5) rectangle (6.05,5);
\foreach \x in {-10,...,10} 
    {
    \foreach \y in {-10,...,10}
    {
    \coordinate (cur) at ($\x*(e1)+\y*(e2)$);
    \filldraw[draw=black] (cur) circle [radius=0.025];
    }
    }
\end{scope}
\begin{scope}
\clip ($4.1*(e1)+1.1*(e2)$) -- ($3.1*(e2)+2.1*(e1)$) -- ($3.1*(e2)+2.1*(e3)$) -- ($4.1*(e3)+1.1*(e2)$) -- ($3.1*(e3)-3*(e1)$) -- ($-5.1*(e1)+1.1*(e3)$) -- ($-5.1*(e1)-1.1*(e2)$) -- ($-4.1*(e2)-2.1*(e1)$) -- ($-5.1*(e2)$) -- ($-5.1*(e3)-1.1*(e1)$) -- ($3.1*(e1)-3.1*(e3)$) -- cycle;

\foreach \x in {-7,...,7} 
    {
    \foreach \y in {-7,...,7}
    {
    \coordinate (cur) at ($\x*(e1)+\y*(e2)$);
    \filldraw[draw=black] (cur) circle [radius=0.05];
    }
    }
\end{scope}
\begin{scope}[every node/.style={scale=.45}]

\draw ($4*(e1)+(e2)$) -- ($3*(e2)+2*(e1)$) node[midway, label={[label distance=0mm]210:$u_3=2$}] {};
\draw ($3*(e2)+2*(e1)$) -- ($3*(e2)+2*(e3)$) node[midway, label={[label distance=1mm]210:$t_3=2$}] {};
\draw ($3*(e2)+2*(e3)$) -- ($4*(e3)+(e2)$) node[midway, label={[label distance=0mm]240:$u_4=2$}] {};
\draw ($4*(e3)+(e2)$) -- ($3*(e3)-3*(e1)$) node[midway, label={[label distance=0mm]330:$t_4=2$}] {};
\draw ($3*(e3)-3*(e1)$) -- ($-5*(e1)+(e3)$) node[midway, label={[label distance=-1mm]300:$u_5=2$}] {};
\draw ($-5*(e1)+(e3)$) -- ($-5*(e1)-(e2)$) node[midway, label={[label distance=0mm]300:$t_5=1$}] {};
\draw ($-5*(e1)-(e2)$) -- ($-4*(e2)-2*(e1)$) node[midway, label={[label distance=0mm]0:$u_6=3$}] {};
\draw ($-4*(e2)-2*(e1)$) -- ($-5*(e2)$) node[midway, label={[label distance=0mm]0:$t_6=1$}] {};
\draw ($-5*(e2)$) -- ($-5*(e3)-(e1)$) node[midway, label={[label distance=0mm]90:$u_1=4$}] {};
\draw ($-5*(e3)-(e1)$) -- ($3*(e1)-3*(e3)$) node[midway, label={[label distance=3mm]160:$t_1=2$}] {};
\draw ($3*(e1)-3*(e3)$) node[label={[label distance=0mm]330:$u_2=0$}] {};
\draw ($3*(e1)-3*(e3)$) -- ($4*(e1)+(e2)$) node[midway, label={[label distance=2mm]150:$t_2=2$}] {};
    
\end{scope}

\def\apoints{(e2),($2*(e2)-(e1)$),($(e2)-(e1)$), ($-2*(e1)+(e2)$),($-1*(e1)$)}

\draw[thick] ($2*(e1)-4*(e3)+(45:0.1)$) --($2*(e1)-4*(e3)+(225:0.1)$);
\draw[thick] ($2*(e1)-4*(e3)+(135:0.1)$) --($2*(e1)-4*(e3)+(315:0.1)$);
\draw[thick] ($-5*(e3)+(45:0.1)$) --($-5*(e3)+(225:0.1)$);
\draw[thick] ($-5*(e3)+(135:0.1)$) --($-5*(e3)+(315:0.1)$);

\foreach \p in \apoints
    {
    \draw[dashed] ($-5*(e3)$)--+\p;
    \draw[dashed] ($2*(e1)-4*(e3)$)--+\p;
    }
\draw[dashed] ($-5*(e3)$)--($2*(e1)-4*(e3)$);
\def\bpoints{($-1*(e1)-(e2)$),($-1*(e2)$),($-2*(e2)+(e1)$),($-1*(e2)+(e1)$)}

\foreach \p in \bpoints
    {
    \draw[] ($2*(e3)+3*(e2)$)--+\p;
    }
\filldraw[draw=black] ($-4*(e2)-2*(e3)$) circle [radius=0.05];
\def\cpoints{($(e1)+(e2)$),(e2),($2*(e2)-(e1)$),($(e2)-(e1)$), ($-2*(e1)+(e2)$)}
\foreach \p in \cpoints
    {
    \draw[] ($-4*(e2)-2*(e3)$) --+\p;
    }

\end{tikzpicture}
        \caption{completing the side $u_4$ then adding to $u_i=u_1$}
        \label{fig:ReAr4-5b}
    \end{subfigure}
    \caption{Process when there is a partially-filled side}
    \label{fig:ReAr4-5}
    \end{figure}
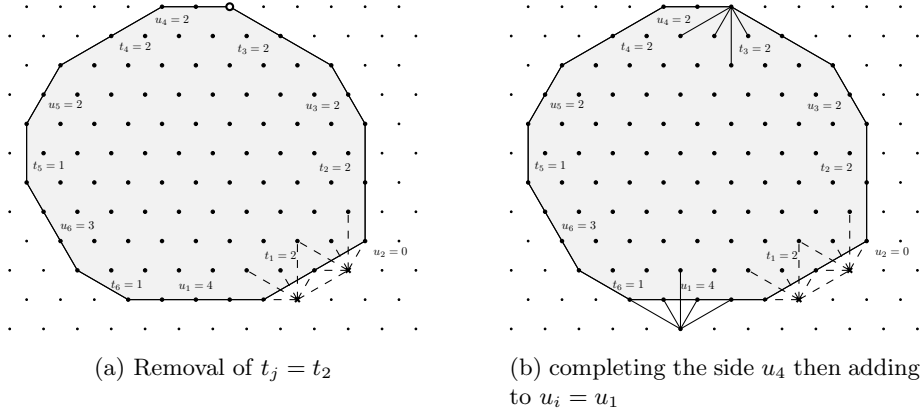

    There is one more issue this process might run into. In the process of doing this rearrangement we may create a degenerate side. Since each of the sides of \(P\) corresponding to the positive parameters \(t_1,t_2,\dots,t_6\) can only increase, this only happens for sides corresponding to the parameters \(u_1,u_2,\dots, u_6\).

    Since \(5\leq u_i=\max\{u_1,u_2,\dots, u_6\}\) and \(u_i\) is decreased by at most \(4\), \(u_i\) will never be \(0\). So we can only form a degenerate side by removing the vertices of \(t_j=\min\{t_1,t_2,\dots, t_6\}\) where \(u_j=1\) or \(u_{j+1}=1\) or both. In any case, we add the vertices of \(t_j\) to \(u_i\). At this stage, we have transformed \(P\) without decreasing its edges, into a \(12\)-gon with at least one degenerate side, and at most one side that is partially-filled. Figure~\ref{fig:ReAr4-5b} depicts an example of this.
    
    We claim that at this point in the process, \(P\) must have one side that is only partially-filled. If there is no partially-filled side, then \(P\in \mathscr{P}\). This is a contradiction as we supposed there was no \(n\)-vertex member of \(\mathscr{P}\) with a degenerate side and \(e\) edges.
    
    So there is a partially-filled side of \(P\). We remove the degree at most $5$ vertex corresponding to a degenerate side, and add it to the partially-filled side which creates $6$ edges. Notice that the edges in \(P\) have strictly increased, and since \(P\) can have at most \(6n\) edges, this case happens only a finite number of times. This is depicted in Figure~\ref{fig:ReAr6-7}.

    \begin{figure}
    \centering
    \begin{subfigure}[t]{0.45\textwidth}
\begin{tikzpicture}[scale=0.45]
\def\points{(e1),($(e1)+(e2)$),(e2),($2*(e2)-(e1)$),($(e2)-(e1)$), ($-2*(e1)+(e2)$),($-1*(e1)$),($-1*(e1)-(e2)$),($-1*(e2)$),($-2*(e2)+(e1)$),($-1*(e2)+(e1)$), ($2*(e1)-(e2)$)}
\coordinate (e1) at (1,0);
\coordinate (e2) at (60:1);
\coordinate (e3) at (120:1);
\filldraw[fill=gray!10!white]($4*(e1)+(e2)$) -- ($3*(e2)+2*(e1)$) -- ($3*(e2)+2*(e3)$) -- ($4*(e3)+(e2)$) -- ($3*(e3)-3*(e1)$) -- ($-5*(e1)+(e3)$) -- ($-5*(e1)-(e2)$) -- ($-4*(e2)-2*(e1)$) -- ($-5*(e2)+(e1)-(e3)$) -- ($-5*(e3)-2*(e1)-(e2)$) -- ($3*(e1)-3*(e3)$) -- cycle;
\begin{scope}[on background layer]
\clip (-6.05,-5.5) rectangle (6.05,5);
\foreach \x in {-10,...,10} 
    {
    \foreach \y in {-10,...,10}
    {
    \coordinate (cur) at ($\x*(e1)+\y*(e2)$);
    \filldraw[draw=black] (cur) circle [radius=0.025];
    }
    }
\end{scope}
\begin{scope}
\clip ($4.1*(e1)+1.1*(e2)$) -- ($3.1*(e2)+2.1*(e1)$) -- ($3.1*(e2)+2.1*(e3)$) -- ($4.1*(e3)+1.1*(e2)$) -- ($3.1*(e3)-3*(e1)$) -- ($-5.1*(e1)+1.1*(e3)$) -- ($-5.1*(e1)-1.1*(e2)$) -- ($-4.1*(e2)-2.1*(e1)$) -- ($-5.1*(e2)+1.1*(e1)-1.1*(e3)$) -- ($-5.1*(e3)-2.1*(e1)-1.1*(e2)$) -- ($3.1*(e1)-3.1*(e3)$) -- cycle;

\foreach \x in {-7,...,7} 
    {
    \foreach \y in {-7,...,7}
    {
    \coordinate (cur) at ($\x*(e1)+\y*(e2)$);
    \filldraw[draw=black] (cur) circle [radius=0.05];
    }
    }
\end{scope}
\begin{scope}[every node/.style={scale=.45}]

\draw ($4*(e1)+(e2)$) -- ($3*(e2)+2*(e1)$) node[midway, label={[label distance=0mm]210:$u_3=2$}] {};
\draw ($3*(e2)+2*(e1)$) -- ($3*(e2)+2*(e3)$) node[midway, label={[label distance=1mm]210:$t_3=2$}] {};
\draw ($3*(e2)+2*(e3)$) -- ($4*(e3)+(e2)$) node[midway, label={[label distance=0mm]270:$u_4=2$}] {};
\draw ($4*(e3)+(e2)$) -- ($3*(e3)-3*(e1)$) node[midway, label={[label distance=0mm]330:$t_4=2$}] {};
\draw ($3*(e3)-3*(e1)$) -- ($-5*(e1)+(e3)$) node[midway, label={[label distance=-1mm]300:$u_5=2$}] {};
\draw ($-5*(e1)+(e3)$) -- ($-5*(e1)-(e2)$) node[midway, label={[label distance=0mm]300:$t_5=1$}] {};
\draw ($-5*(e1)-(e2)$) -- ($-4*(e2)-2*(e1)$) node[midway, label={[label distance=0mm]0:$u_6=3$}] {};
\draw ($-4*(e2)-2*(e1)$) -- ($-5*(e2)+(e1)-(e3)$) node[midway, label={[label distance=0mm]30:$t_6=2$}] {};
\draw ($-5*(e2)+(e1)-(e3)$) -- ($-5*(e3)-2*(e1)-(e2)$) node[midway, label={[label distance=0mm]90:$u_1=1$}] {};
\draw ($-5*(e3)-2*(e1)-(e2)$) -- ($3*(e1)-3*(e3)$) node[midway, label={[label distance=1mm]180:$t_1=3$}] {};
\draw ($3*(e1)-3*(e3)$) node[label={[label distance=0mm]330:$u_2=0$}] {};
\draw ($3*(e1)-3*(e3)$) -- ($4*(e1)+(e2)$) node[midway, label={[label distance=2mm]150:$t_2=2$}] {};
    
\end{scope}
\filldraw[draw=black, thick, fill=white] ($-3*(e3)-3*(e2)$) circle [radius=0.1];

\end{tikzpicture}
        \caption{\(P\) with one degenerate side and one partially-filled side}
    \end{subfigure}
    \hfill
    \begin{subfigure}[t]{0.45\textwidth}
\begin{tikzpicture}[scale=0.45]
\def\points{(e1),($(e1)+(e2)$),(e2),($2*(e2)-(e1)$),($(e2)-(e1)$), ($-2*(e1)+(e2)$),($-1*(e1)$),($-1*(e1)-(e2)$),($-1*(e2)$),($-2*(e2)+(e1)$),($-1*(e2)+(e1)$), ($2*(e1)-(e2)$)}
\coordinate (e1) at (1,0);
\coordinate (e2) at (60:1);
\coordinate (e3) at (120:1);
\filldraw[fill=gray!10!white]($4*(e1)+(e2)$) -- ($3*(e2)+2*(e1)$) -- ($3*(e2)+2*(e3)$) -- ($4*(e3)+(e2)$) -- ($3*(e3)-3*(e1)$) -- ($-5*(e1)+(e3)$) -- ($-5*(e1)-(e2)$) -- ($-4*(e2)-2*(e1)$) -- ($-5*(e2)+(e1)-(e3)$) -- ($-5*(e3)-2*(e1)-(e2)$) -- ($-4*(e3)+1*(e1)$) -- ($4*(e1)-1*(e3)$) -- cycle;
\begin{scope}[on background layer]
\clip (-6.05,-5.5) rectangle (6.05,5);
\foreach \x in {-10,...,10} 
    {
    \foreach \y in {-10,...,10}
    {
    \coordinate (cur) at ($\x*(e1)+\y*(e2)$);
    \filldraw[draw=black] (cur) circle [radius=0.025];
    }
    }
\end{scope}
\begin{scope}
\clip ($4.1*(e1)+1.1*(e2)$) -- ($3.1*(e2)+2.1*(e1)$) -- ($3.1*(e2)+2.1*(e3)$) -- ($4.1*(e3)+1.1*(e2)$) -- ($3.1*(e3)-3*(e1)$) -- ($-5.1*(e1)+1.1*(e3)$) -- ($-5.1*(e1)-1.1*(e2)$) -- ($-4.1*(e2)-2.1*(e1)$) -- ($-5.1*(e2)+1.1*(e1)-1.1*(e3)$) -- ($-5.1*(e3)-2.1*(e1)-1.1*(e2)$) -- ($-4.1*(e3)+1.1*(e1)$) -- ($4.1*(e1)-1.1*(e3)$) -- cycle;

\foreach \x in {-7,...,7} 
    {
    \foreach \y in {-7,...,7}
    {
    \coordinate (cur) at ($\x*(e1)+\y*(e2)$);
    \filldraw[draw=black] (cur) circle [radius=0.05];
    }
    }
\end{scope}
\begin{scope}[every node/.style={scale=.45}]

\draw ($4*(e1)+(e2)$) -- ($3*(e2)+2*(e1)$) node[midway, label={[label distance=0mm]210:$u_3=2$}] {};
\draw ($3*(e2)+2*(e1)$) -- ($3*(e2)+2*(e3)$) node[midway, label={[label distance=1mm]210:$t_3=2$}] {};
\draw ($3*(e2)+2*(e3)$) -- ($4*(e3)+(e2)$) node[midway, label={[label distance=0mm]270:$u_4=2$}] {};
\draw ($4*(e3)+(e2)$) -- ($3*(e3)-3*(e1)$) node[midway, label={[label distance=0mm]330:$t_4=2$}] {};
\draw ($3*(e3)-3*(e1)$) -- ($-5*(e1)+(e3)$) node[midway, label={[label distance=-1mm]300:$u_5=2$}] {};
\draw ($-5*(e1)+(e3)$) -- ($-5*(e1)-(e2)$) node[midway, label={[label distance=0mm]300:$t_5=1$}] {};
\draw ($-5*(e1)-(e2)$) -- ($-4*(e2)-2*(e1)$) node[midway, label={[label distance=0mm]0:$u_6=3$}] {};
\draw ($-4*(e2)-2*(e1)$) -- ($-5*(e2)+(e1)-(e3)$) node[midway, label={[label distance=0mm]30:$t_6=2$}] {};
\draw ($-5*(e2)+(e1)-(e3)$) -- ($-5*(e3)-2*(e1)-(e2)$) node[midway, label={[label distance=0mm]270:$u_1=1$}] {};
\draw ($-5*(e3)-2*(e1)-(e2)$) -- ($-4*(e3)+1*(e1)$) node[midway, label={[label distance=1mm]150:$t_1=2$}] {};
\draw ($-4*(e3)+1*(e1)$) -- ($4*(e1)-1*(e3)$) node[midway, label={[label distance=3mm]180:$u_2=3$}] {};
\draw ($4*(e1)-1*(e3)$) -- ($4*(e1)+(e2)$) node[midway, label={[label distance=2mm]150:$t_2=1$}] {};
    
\end{scope}

\def\apoints{(e2),($2*(e2)-(e1)$),($(e2)-(e1)$), ($-2*(e1)+(e2)$)}
\foreach \p in \apoints
    {
    \draw[] ($-3*(e3)-3*(e2)$)--+\p;
    }
\draw[thick] ($3*(e1)-3*(e3)+(45:0.1)$) --($3*(e1)-3*(e3)+(225:0.1)$);
\draw[thick] ($3*(e1)-3*(e3)+(135:0.1)$) --($3*(e1)-3*(e3)+(315:0.1)$);

\def\bpoints{($2*(e2)-(e1)$),($(e2)-(e1)$), ($-2*(e1)+(e2)$),($-1*(e1)$),($-1*(e1)-(e2)$)}
\foreach \p in \bpoints
    {
    \draw[dashed] ($3*(e1)-3*(e3)$)--+\p;
    }

\end{tikzpicture}
        \caption{Removal of the degenerate side and adding it to the partially-filled side}
    \end{subfigure}
    \caption{Depiction of process when a degenerate side is created}
    \label{fig:ReAr6-7}
    \end{figure}
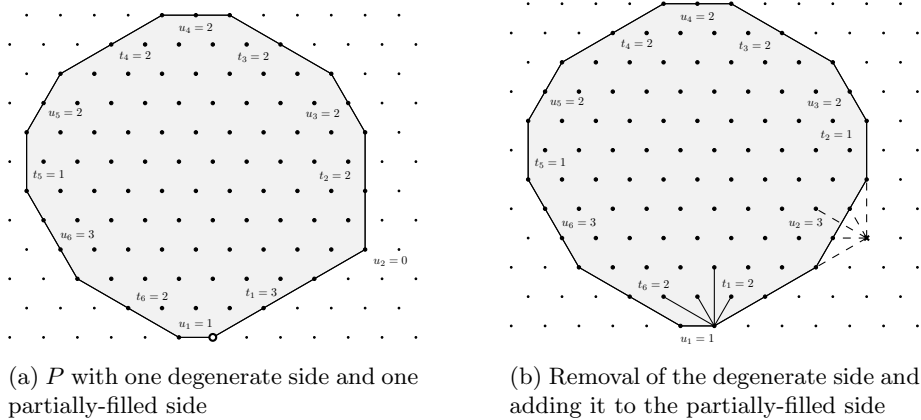
    
    After removing a degenerate side corresponding to the parameter \(u_k\), \(u_k\) is increased by \(3\), and its adjacent sides \(t_{k-1}\) and \(t_k\) have decreased by \(1\). This may form new degenerate sides, but then we repeat this process: Consider a degenerate side. Just as before there must be some other side that is half filled, so remove the vertex of degree at most \(5\) corresponding to the degenerate side, and add it to the partially-filled side strictly increasing the number of edges. The only way this process stops is when \(\max\{u_1,u_2,\dots, u_6\}-3\leq \min\{t_1,t_2,\dots, t_6\}\).
    
    Let \(P'\) denote the end result of this process, with \(n\) vertices, \(e'\geq e\) edges, and parameters \(u_i',t_i'\geq 1\) \((i=1,\dots, 6)\) satisfying
    \[
    \max\{u_1',u_2',\dots, u_6'\}-3\leq \min\{t_1',t_2',\dots, t_6'\}.
    \]
    
    Since \(P'\) may have an incomplete side, it may not be in \(\mathscr{P}\). If \(P'\in \mathscr{P}\), we let \(P^*=P'\) with $n^*=n$ vertices and $e^*=e'$ edges. Otherwise, let \(P^*\in \mathscr{P}\) be \(P'\) with the incomplete side filled in, with \(n^* > n\) vertices and \(e^*\) edges. We add at most \(\max \{u_1^*,u_2^*,\dots, u_6^*\}\) vertices to fill in this side. If we can show that \(P^*\) satisfies Theorem \ref{the:P} then we are done as \(e^*=e'+6(n^*-n)\) and so if \(e^*\leq e(n^*)\) then 
    \[
    e\leq e'\leq e^*-6(n^*-n)\leq e(n^*)-6(n^*-n)\leq e(n). \qedhere
    \]
    
    \end{proof}

Continuing with the proof of Theorem~\ref{the:P}, let \(P^*\in \mathscr{P}\) have \(n^*\) vertices, \(e^*\) edges, and \(b^*\) boundary edges, be the \(12\)-gon associated to \(P\) given by Claim \ref{cla:maxmin}. By Claim \ref{cla:maxmin}, to complete the inductive step for \(P\), it suffices to prove \(e^*\leq e(n^*)\). Note that, also by Claim \ref{cla:maxmin}, the inductive hypothesis applies for all
\[
n'<n^*-\max \{u_1^*,u_2^*,\dots, u_6^*\}\leq n.
\]

\begin{definition}\label{def:dudt}
    Let \(k=\max\{u^*_1,u^*_2,\dots, u^*_6\}\). Let \(\mu^*_i=k-u^*_i\) and \(\tau^*_i=t^*_i-(k-3)\) \((i=1,\dots,6)\), and
    \[
    d_u^*=\sum_{i=1}^{6}{\mu^*_i}\text{ and } d_t^*=\sum_{i=1}^{6}{\tau^*_i}.
    \]
\end{definition}
By Claim \ref{cla:maxmin} we have that \(\mu^*_i,\tau^*_i\geq 0\) \((i=1,\dots, 6)\), hence \(d_u^*\) and \(d_t^*\) are non-negative. An intuitive way to view these parameters is that \(\mu^*_i\) and \(\tau^*_i\) measure how far away \(u^*_i\) and \(t^*_i\) are from their respective maximum and minimum. Let \(b^*_u=\sum_{i=1}^{6}{u_i^*}\) and \(b^*_t=\sum_{i=1}^{6}{t_i^*}\).

\begin{claim}\label{cla:bubt}
    \(b_t^*=b_u^*+d_u^*+d_t^*-18.\)
\end{claim}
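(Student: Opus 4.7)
The plan is to prove this purely by algebraic manipulation using the definitions of $d_u^*$, $d_t^*$, $b_u^*$, and $b_t^*$. No geometric argument is needed here, since the statement is just a bookkeeping identity relating the total side lengths to how far $P^*$ is from being a regular $12$-gon.

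First I would rewrite $d_u^*$ and $d_t^*$ in terms of $k$, $b_u^*$, and $b_t^*$. From Definition~\ref{def:dudt}, summing $\mu_i^* = k - u_i^*$ over $i=1,\dots,6$ gives
\[
d_u^* = \sum_{i=1}^{6}(k - u_i^*) = 6k - b_u^*,
\]
and summing $\tau_i^* = t_i^* - (k-3)$ gives
\[
d_t^* = \sum_{i=1}^{6}\bigl(t_i^* - (k-3)\bigr) = b_t^* - 6(k-3) = b_t^* - 6k + 18.
\]

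Adding these two equations, the $6k$ terms cancel:
\[
d_u^* + d_t^* = (6k - b_u^*) + (b_t^* - 6k + 18) = b_t^* - b_u^* + 18.
\]
Rearranging yields $b_t^* = b_u^* + d_u^* + d_t^* - 18$, which is the desired identity. The only step that requires any care is keeping track of the constant $-6(k-3) = -6k + 18$, which is what produces the $-18$ in the final expression. There is no real obstacle here — the claim is a direct consequence of unpacking the definitions.
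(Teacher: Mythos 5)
Your proof is correct and takes essentially the same approach as the paper: both simply unpack the definitions of $\mu_i^*$ and $\tau_i^*$, sum over $i$, and eliminate $6k$. The only cosmetic difference is that you add the two summed identities and rearrange, while the paper performs the substitution in a single chain of equalities.
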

\begin{proof}
    \[
    b_t^*=\sum_{i=1}^{6}{t_i^*}=6k-18+d_t^*=\sum_{i=1}^{6}{u_i^*}+d_u^*-18+d_t^*=b_u^*+d_u^*+d_t^*-18. \qedhere 
    \]
\end{proof}

\begin{claim}\label{cla:e*}
    \(
    e^*=6n^*-4b^*-d_u^*-d_t^*+12
    \)
\end{claim}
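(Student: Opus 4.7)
The plan is to combine equation~\eqref{eq:eindef}, Lemma~\ref{lem:bubt}, and Claim~\ref{cla:bubt} in a straightforward computation. The only substantive observation needed beyond these ingredients is that Lemma~\ref{lem:bubt} genuinely applies to $P^*$.

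First I would verify the angle hypothesis. By Claim~\ref{cla:maxmin} the parameters of $P^*$ satisfy $u_i^*, t_i^* \geq 1$ for all $i=1,\dots,6$, so no side of $P^*$ is degenerate. Hence $\partial P^*$ is a true convex lattice $12$-gon whose consecutive edge directions, traversed counterclockwise, are precisely the $12$ vectors of $U$ in cyclic order. Because these $12$ unit directions are equally spaced at $30^\circ$ around the circle, the exterior angle at every boundary vertex of $P^*$ equals $30^\circ$, and so every interior angle equals $150^\circ \geq 90^\circ$. Thus Lemma~\ref{lem:bubt} applies to $P^*$ and gives
\[
\partial(V(P^*)) = 6b^* + 4b_t^* + 12.
\]

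Next, I would eliminate $b_t^*$ in favour of $b^*$, $d_u^*$, $d_t^*$. Writing $b^* = b_u^* + b_t^*$ and using Claim~\ref{cla:bubt} ($b_t^* = b_u^* + d_u^* + d_t^* - 18$), I get $b_u^* = b^* - b_t^*$, and substituting yields
\[
2 b_t^* = b^* + d_u^* + d_t^* - 18,
\qquad\text{i.e.,}\qquad
b_t^* = \tfrac{1}{2}\bigl(b^* + d_u^* + d_t^* - 18\bigr).
\]
Plugging this into the expression for $\partial(V(P^*))$ gives
\[
\partial(V(P^*)) = 6b^* + 2\bigl(b^* + d_u^* + d_t^* - 18\bigr) + 12 = 8b^* + 2d_u^* + 2d_t^* - 24.
\]

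Finally, since $\Lambda_U$ is $12$-regular, equation~\eqref{eq:eindef} gives $e^* = 6n^* - \tfrac{1}{2}\partial(V(P^*))$, and substituting the expression above yields
\[
e^* = 6n^* - 4b^* - d_u^* - d_t^* + 12,
\]
as claimed. There is no real obstacle here: the proof is a three-line algebraic manipulation, and the only non-trivial point is confirming that every interior angle of $P^*$ is at least $90^\circ$ so that Lemma~\ref{lem:bubt} is available—which follows immediately from the non-degeneracy of the sides of $P^*$ guaranteed by Claim~\ref{cla:maxmin}.
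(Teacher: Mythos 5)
Your proof is correct and follows the same route as the paper: combine Lemma~\ref{lem:bubt}, Claim~\ref{cla:bubt}, and equation~\eqref{eq:eindef}. You additionally spell out why the interior-angle hypothesis of Lemma~\ref{lem:bubt} holds for $P^*$ (non-degeneracy of all sides forces every interior angle to equal $150^\circ$), a point the paper leaves implicit; aside from the minor slip of calling the twelve directions of $U$ ``unit'' (some have length $\sqrt{3}$, though the $30^\circ$ spacing of the directions is what matters), the argument is the same.
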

\begin{proof}
    By Lemma \ref{lem:bubt} and Claim \ref{cla:bubt} we have 
    \begin{align*}
    \partial(V(P^*))&=6b^*+4b_t^*+12
    =6b^*+2(b^*+d_u^*+d_t^*-18)+12
    \\&=8b^*+2(d_u^*+d_t^*-12).
    \end{align*}
    So
    \[
    e^*= 6n^*-\frac{\partial(V(P^*))}{2}=6n^*-4b^*-d_u^*-d_t^*+12.\qedhere
    \]
\end{proof}

\begin{claim}\label{cla:bbig}
    Suppose \(b^*\geq \frac{\sqrt{96n^*-63}}{4}-\frac{3}{2}\), if \(d_u^*+d_t^*\geq 18\) then \(e^*\leq 6n^*-\sqrt{96n^*-63}\).
\end{claim}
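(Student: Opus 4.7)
The plan is a direct algebraic manipulation using Claim \ref{cla:e*} together with the two hypotheses; no new geometric idea is required. First I would invoke Claim \ref{cla:e*} to rewrite
\[
e^* = 6n^* - 4b^* - d_u^* - d_t^* + 12.
\]
Then the assumption $d_u^* + d_t^* \geq 18$ immediately yields $-d_u^* - d_t^* + 12 \leq -6$, so
\[
e^* \leq 6n^* - 4b^* - 6.
\]

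Next I would plug in the lower bound on $b^*$. Since $b^* \geq \tfrac{\sqrt{96n^*-63}}{4} - \tfrac{3}{2}$, we get
\[
-4b^* \leq -\sqrt{96n^*-63} + 6,
\]
and combining this with the previous inequality gives
\[
e^* \leq 6n^* - \sqrt{96n^*-63} + 6 - 6 = 6n^* - \sqrt{96n^*-63},
\]
which is the desired conclusion. There is no real obstacle here; the claim is essentially a bookkeeping step that packages together the edge-count formula of Claim \ref{cla:e*} with the hypothesis on $b^*$, preparing the ground for the remaining cases of the inductive step where $d_u^* + d_t^*$ is small (and therefore $P^*$ is close to the regular extremal shape).
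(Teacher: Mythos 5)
Your proof is correct and follows essentially the same route as the paper: both invoke Claim~\ref{cla:e*} and then substitute the two hypotheses, with the paper performing the substitution in a single display. No further comment needed.
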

\begin{proof}
    Using Claim \ref{cla:e*} we obtain
    \begin{align*}
    e^*&=6n^*-4b^*-d_u^*-d_t^*+12\\
    &\leq 6n^*-4\left(\frac{\sqrt{96n^*-63}}{4}-\frac{3}{2}\right)-18+12
    = 6n^*-\sqrt{96n^*-63}. \qedhere
    \end{align*}
\end{proof}

Now we handle the inductive step if the boundary is sufficiently small. Let \(r^*\) be the number of edges in~\(P^*\) incident to a boundary vertex of \(P^*\), and let \(d_i^*\) be the number of boundary vertices of \(P^*\) having degree~\(i\). The minimum interior angle between two edges incident with the same vertex is \(30^\circ\), which together with the angle sum formula for polygons applied to the boundary of \(P^*\), implies that \(30^\circ r^*=30^\circ\sum_{i}d_i^*(i-1)\leq 180^\circ(b^*-2) \) giving us
\begin{equation}\label{eq:r}
    r^*\leq 6b^*-12. 
\end{equation}

\begin{claim}\label{cla:bsmall}
    If \(b^*\leq \frac{\sqrt{96n^*-63}}{4}-\frac{3}{2}\), then \(e^*\leq 6n^*-\sqrt{96n^*-63}.\)
\end{claim}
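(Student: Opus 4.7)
The plan is to peel off the outer boundary layer of $P^*$, apply the inductive hypothesis to what remains, and combine the bound with inequality~\eqref{eq:r}. Define $P^{**}$ to be the induced subgraph of $\Lambda_U$ on $V(P^*)$ with all $b^*$ boundary vertices removed. Its twelve supporting half-planes are those of $P^*$ pushed inward by one lattice step (some possibly collapsing), so $P^{**}\in\mathscr{P}$. It has $n'=n^*-b^*$ vertices and $e'=e^*-r^*$ edges.

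Next I would verify that the inductive hypothesis is available for $P^{**}$. Since every $u_i^*,t_i^*\ge 1$, we have $b^*\ge 12>\max_i u_i^*$, so $n'=n^*-b^*<n^*-\max_i u_i^*$, which is exactly the range in which the inductive hypothesis applies (as noted just before Definition~\ref{def:dudt}). Thus $e'\le e(n')$, and combining with \eqref{eq:r} in the form $r^*\le 6b^*-12$ gives
\[
e^* \;=\; e' + r^* \;\le\; e(n') + 6b^* - 12.
\]

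In the generic case, $e(n')\le 6n'-\sqrt{96n'-63}$, and the proof reduces to verifying
\[
\sqrt{96n^*-63}-\sqrt{96(n^*-b^*)-63}\le 12,
\]
equivalent (after rationalizing) to $\sqrt{96n^*-63}+\sqrt{96(n^*-b^*)-63}\ge 8b^*$. Squaring the hypothesis $4b^*+6\le\sqrt{96n^*-63}$ gives $96n^*\ge 16b^{*2}+48b^*+99$, which rearranges to $96(n^*-b^*)-63\ge (4b^*-6)^2$. Hence the two square roots are bounded below by $4b^*+6$ and $4b^*-6$ respectively (the latter trivially if $b^*\le 1$), and their sum is at least $8b^*$, as required.

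The main obstacle I anticipate is the special case where $n'=24k^2-24k+7$ for some $k$: here $e(n')=6n'-4\sqrt{6n'-6}$ exceeds $6n'-\sqrt{96n'-63}$ by $\sqrt{96n'-63}-4\sqrt{6n'-6}$, which is positive but less than $1$. This surplus is not absorbed by the tight chain above, so for each of the (finitely many) borderline configurations $(n^*,b^*)$ for which $n^*-b^*$ is of the special form, one must check the bound $e^*\le 6n^*-\sqrt{96n^*-63}$ directly, presumably via the exact symbolic computation of Algorithm~\ref{alg:IS}; the rest of the proof then goes through as above.
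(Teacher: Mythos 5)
Your treatment of the generic case is correct and follows the same route as the paper: peel off the boundary to get $P'\in\mathscr{P}$, check the inductive hypothesis is available because $n'=n^*-b^*<n^*-\max_i u_i^*$, and combine $e^*=e'+r^*$ with $r^*\le 6b^*-12$; the paper verifies the resulting square-root inequality by direct squaring rather than rationalizing, but that is the same algebra. However, the special case you flag at the end is not a minor afterthought to be outsourced — it is a genuine case in the paper's argument, and your proposed fix does not work. First, the set of ``borderline'' configurations is not finite: $P'$ can have parameters $u_i=k$, $t_i=k-1$ for every $k\ge 2$, giving one exceptional $n'=24k^2-24k+7$ for each $k$, so there is no finite check to perform. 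Second, Algorithm~\ref{alg:IS} is for the complementary regime $b^*>\frac{\sqrt{96n^*-63}}{4}-\frac{3}{2}$ with $d_u^*+d_t^*<18$; it has nothing to do with Claim~\ref{cla:bsmall}, so ``presumably via Algorithm~\ref{alg:IS}'' does not fill the gap.

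What is actually needed in the special case is the observation the paper makes: peeling the boundary increases each $u_i$ by $1$ and decreases each $t_i$ by $1$, so if $P'$ has $u_i=k,\ t_i=k-1$ then $P^*$ must have $u_i^*=k-1,\ t_i^*=k$ for all $i$. This pins down all parameters, and then Lemma~\ref{lem:bubt} and Lemma~\ref{lem:nuiti} give an exact computation: $\partial(V(P^*))=8b^*+24$, $n^*=24k^2-12k+1$, $b^*=12k-6=\sqrt{6n^*+3}-3$, hence $e^*=6n^*-4\sqrt{6n^*+3}$, which is strictly less than $6n^*-\sqrt{96n^*-63}$ because $16(6n^*+3)=96n^*+48>96n^*-63$. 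Without this explicit determination of $P^*$ in the special case, the chain $e^*\le e(n')+6b^*-12$ is, as you yourself computed, too lossy, and your proof does not close. So the proposal has a genuine gap in exactly the place you anticipated trouble, and the suggested patch is the wrong tool.
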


\begin{proof}
    Remove each boundary vertex from \(P^*\) to form an \(n'\)-vertex graph \(P'\) with $e'$ edges. Since each \(u_i^*,t_i^*\geq 1\), \(P^*\) contains the \(12\)-gon with \(u_i=t_i=1\) \((i=1,\dots,6)\), implying that \(P'\) has at least \(8\) vertices. Removing the boundary of \(P^*\) shifts each half-plane defining \(P^*\) inwards, which gives \(P'\in \mathcal{P^*}\). Since \(n'=n^*-b^*<n^*-\max \{u_1^*,u_2^*,\dots, u_6^*\}\), we may apply the inductive hypothesis to \(P'\).
    There are two cases: either \(P'\) has parameters \(u_i=k\) and \(t_i=k-1\) \((i=1,\dots, 6)\) for some \(k\in \mathbb{N}\) or \(P'\) does not.
    
    In the case where \(P'\) has parameters \(u_i=k\) and \(t_i=k-1\) \((i=1,\dots, 6)\) for some \(k\in \mathbb{N}\), we see that \(P^*\) must be the graph where all \(u_i^*=k-1\) and \(t_i^*=k\) (removing the boundary of \(P^*\) decreases each \(t_i^*\) by one and increases each \(u_i^*\) by one) which implies \( b_t^*=b_u^*+6\), which by Lemma \ref{lem:bubt} implies
    \[
    \partial(V(P^*))=6b^*+4b_t^*+12=6b^*+2(b_t^*+b_u^*+6)+12=8b^*+24.
    \]
    Using Lemma~\ref{lem:nuiti} we obtain \(n^*=24k^2-12k+1\). Thus \(k=\frac{1}{4}+\frac{\sqrt{2n+1}}{4\sqrt{3}}\), hence \(b^*=12k-6=\sqrt{6n+3}-3\),
    which gives us \(\partial(V(P^*))=8\sqrt{6n+3}\) implying by \eqref{eq:eindef} that \(e^*=6n-4\sqrt{6n+3}\leq e(n^*)\).
    
    In the second case we obtain by \eqref{eq:r}:
    \begin{align*}
    e^*=e'+r^*&\leq 6(n^*-b^*)-\sqrt{96(n^*-b^*)-63}+6b^*-12
    \\&=6n^*-\sqrt{96(n^*-b^*)-63}-12.
    \end{align*}
    So to complete this case it remains to be shown that $6n^*-\sqrt{96(n^*-b^*)-63}-12\leq 6n^*-\sqrt{96n^*-63}$. However, this inequality is algebraically equivalent to the given $b^*\leq \frac{\sqrt{96n^*-63}}{4}-\frac{3}{2}$.

\end{proof}

\subsubsection{Remaining cases in the inductive step}

We have proven the inductive step of Theorem \ref{the:P} for all cases except when \(\frac{\sqrt{96n^*-63}}{4}-\frac{3}{2}<b^*\) and \(d_u^*+d_t^*<18\). For each \(k\in \mathbb{N}\) there is only a finite number of cases the twelve parameters \(\mu^*_i\) and \(\tau^*_i\) \((i=1,\dots,6)\) can take as their sum is less than \(18\). First we substitute the parameters \(k, \mu^*_i, \tau^*_i\) into Lemma \ref{lem:uiticlose} to obtain the following condition on the \(\mu^*_i\) and \(\tau^*_i\) \((i=1,\dots, 6)\) in order for them to be the parameters of a \(12\)-gon.

\begin{align*}
    0&=-\mu_1^*+\mu_4^*+\tau_1^*-\tau_4^*-\tau_2^*+\tau_5^*+\mu_3^*-\mu_6^*-2\tau_3^*+2\tau_6^*
    \\ 0&=\tau_1^*-\tau_4^*-\mu_2^*+\mu_5^*+2\tau_2^*-2\tau_5^*-\mu_3^*+\mu_6^*+\tau_3^*-\tau_6^*
\end{align*}

We can also make this substitution into Lemma \ref{lem:nuiti} to obtain the following formula for \(n^*\) in terms of \(k,\mu^*_i\) and \(\tau^*_i\) \((i=1,\dots, 6)\).

\begin{align*}
    n^*&=(6k-11+\tau_2^*+2\tau_3^*+\tau_4^*-\mu_3^*-\mu_4^*)(6k-11+\tau_1^*+2\tau_2^*+\tau_3^*+\mu_2^*+\mu_3^*)
    \\& \\&\mathrel{\phantom{=}} {} -\binom{3k-5+\tau_2^*+\tau_3^*+\mu_3^*}{2}-\binom{3k-5+\tau_5^*+\tau_6^*+\mu_6^*}{2}
    \\& \\&\mathrel{\phantom{=}} {}-\sum_{i=1}^{6}{\binom{k-2+\tau_i^*}{2}}.
\end{align*}

This simplifies to 
\begin{equation}\label{eq:LQ}
\begin{aligned}
n^*&=24k^2+L(\tau^*_1,\tau^*_2,\tau^*_3,\tau^*_4,\tau^*_5,\tau^*_6,\mu^*_2,\mu^*_3,\mu^*_4,\mu^*_6)k
\\&\mathrel{\phantom{=}} {}+Q(\tau^*_1,\tau^*_2,\tau^*_3,\tau^*_4,\tau^*_5,\tau^*_6,\mu^*_2,\mu^*_3,\mu^*_4,\mu^*_6), 
\end{aligned}
\end{equation}
where \(L\) is a linear function and \(Q\) is a quadratic function. This gives
\begin{equation}\label{eq:k}
k=\frac{-L+ \sqrt{L^2-96(Q-n^*)}}{48}.
\end{equation}

We can also obtain \(b^*\) in terms of \(k\) \(\mu_i^*\) and \(\tau_i^*\) \((i=1,\dots, 6)\). 

\begin{equation}\label{eq:b^*}
b^*=b_u^*+b_t^*=\sum_{i=1}^{6}{u_i^*}+\sum_{i=1}^{6}{t_i^*}=6k-\sum_{i=1}^{6}{\mu_i^*}+6k-18+\sum_{i=1}^{6}{\tau_i^*}=12k-18-d_u^*+d_t^*.
\end{equation}

Substituting \eqref{eq:k} and \eqref{eq:b^*} into Claim \ref{cla:e*} we obtain:
\begin{align*}
e^*&=6n^*-4\left(12\cdot\frac{-L+ \sqrt{L^2-96(Q-n^*)}}{48}-18-d_u^*+d_t^*\right)-d_u^*-d_t^*+12,
\end{align*}
which simplifies to
\begin{equation}\label{eq:e^*}
    e^*=6n^*-\sqrt{96n^*+L^2-96Q}+L+3d_u^*-5d_t^*+84.
\end{equation}

We start by classifying which values of \(\mu^*_i\) and \(\tau^*_i\) correspond to \(12\)-gons and have a sum less than \(18\). Since \(k=\max\{u_1,\dots, u_6\}\) and \(P^*\) has a rotationally symmetry, we may assume \(\mu_1=0\). For each case we compute \(e^*\) in terms of \(n^*\) using \eqref{eq:e^*} and verify that \(e^*\leq 6n^*-\sqrt{96n^*-63}\) unless \(\mu^*_i=0\) and \(\tau^*_i=2\) \((i=1,\dots,6)\), in which case \(e^*= 6n^*-4\sqrt{6n^*-6}\) and by using \eqref{eq:LQ} $n^*=24k^2-24k+7$. Algorithm \ref{alg:IS} is the pseudocode for this computation. The Python implementation can be found attached to this arXiv submission under the file name \texttt{Inductive\_step.py}.

\begin{algorithm}
    \caption{Algorithm for outstanding cases in inductive step}\label{alg:IS}
    \begin{algorithmic}
        \State cases \(\gets [(\mu_1^*,\mu_2^*,\dots, \mu_6^*,\tau_1^*,\tau_2^*,\dots, \tau_6^*) \text{ satisfying \ref{con:11}}, \ref{con:12}, \ref{con:13} \text{ and }  \ref{con:14}]\)
        \begin{enumerate}
            \item \(\mu_i^*,\tau_i^*\geq 0\), \((i=1,\dots, 6)\) and \(\mu_1^*=0\) \label{con:11}
            \item \(\sum_{i=1}^6 \mu_i^*+\tau_i^*<18\)\label{con:12}
            \item \(0=-\mu_1^*+\mu_4^*+\tau_1^*-\tau_4^*-\tau_2^*+\tau_5^*+\mu_3^*-\mu_6^*-2\tau_3^*+2\tau_6^*\)\label{con:13}
            \item \(0=\tau_1^*-\tau_4^*-\mu_2^*+\mu_5^*+2\tau_2^*-2\tau_5^*-\mu_3^*+\mu_6^*+\tau_3^*-\tau_6^*\)\label{con:14}
        \end{enumerate}
        \Comment{Conditions \ref{con:11} and \ref{con:12} come from Claim \ref{cla:maxmin} and Claim \ref{cla:bbig}.}

        \Comment{Conditions \ref{con:13} and \ref{con:14} come from Lemma \ref{lem:uiticlose}.}
        \For{\((\mu_1^*,\mu_2^*,\dots, \mu_6^*,\tau_1^*,\tau_2^*,\dots, \tau_6^*)\) in cases}
            \State \(L\gets L(\tau^*_1,\tau^*_2,\tau^*_3,\tau^*_4,\tau^*_5,\tau^*_6,\mu^*_2,\mu^*_3,\mu^*_4,\mu^*_6)\) \Comment{Comes from equation \eqref{eq:LQ}.}
            \State \(Q\gets Q(\tau^*_1,\tau^*_2,\tau^*_3,\tau^*_4,\tau^*_5,\tau^*_6,\mu^*_2,\mu^*_3,\mu^*_4,\mu^*_6)\)\Comment{Comes from equation \eqref{eq:LQ}.}
            \State \(d_u^*\gets \sum_{i=1}^6 \mu_i^*\)
            \State \(d_t^*\gets \sum_{i=1}^6 \tau_i^*\)
            \State edge\_formula \(\gets 6n^*-\sqrt{96n^*+L^2-96Q}+L+3d_u^*-5d_t^*+84 \) 
            \State \Comment{Comes from equation \eqref{eq:e^*}.}
            \If{edge\_formula \(>6n^*-\sqrt{96n^*-63}\)}
            { \Return\((\mu_1^*,\dots, \mu_6^*,\tau_1^*,\dots, \tau_6^*)\)}
            \EndIf
        \EndFor
    \end{algorithmic}
\end{algorithm}

\subsection{Proof of Upper Bound of Theorem \ref{the:e}}
We aim to prove Theorem \ref{the:e} by induction \(n\). Clearly it is enough to show Theorem \ref{the:e} for all induced subgraphs of \(\Lambda_U\). The base cases are when \(n\leq 7\), which are easily verified. For the inductive step, suppose \(n\geq 8\) and \(\Lambda_U[S]\) is an \(n\)-vertex subgraph of \(\Lambda_U\) with \(e\) edges. Additionally, suppose \(\Lambda_U[S]\) has the maximum number of edges out of all \(n\)-vertex subgraphs of \(\Lambda_U\). The inductive hypothesis is that all \(n'\)-vertex subgraphs of \(\Lambda_U\), where \(3\leq n'<n\), have at most \(e(n')\) edges.

\begin{claim}
    We may assume \(\Lambda_U[S]\) is \(2\)-connected.
\end{claim}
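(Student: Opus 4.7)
The plan is to show that if $\Lambda_U[S]$ is not $2$-connected then the bound $e \leq e(n)$ already follows from the inductive hypothesis combined with two super-additivity inequalities for the function $e$, so only the $2$-connected case remains to be handled. As a preliminary observation, if some $v \in S$ has no neighbor in $S$, then translating $v$ to a lattice position adjacent to any vertex of $S \setminus \{v\}$ strictly increases the edge count of $\Lambda_U[S]$, contradicting the maximality of $e$; hence every vertex of $\Lambda_U[S]$ has positive degree.

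If $\Lambda_U[S]$ is disconnected, partition $S = S_1 \sqcup S_2$ with $|S_i|\geq 2$ and no edges of $\Lambda_U[S]$ between the parts. The inductive hypothesis (together with the direct bound $|E(\Lambda_U[S_i])|\leq 1$ when $|S_i|=2$) yields $|E(\Lambda_U[S_i])| \leq e(|S_i|)$, hence $e \leq e(|S_1|)+e(|S_2|)$. The super-additivity $e(n_1)+e(n_2) \leq e(n_1+n_2)$ can be verified directly from the formula in Theorem~\ref{the:e}, and is geometrically witnessed by translating optimal subgraphs of sizes $n_1$ and $n_2$ to be disjoint inside $\Lambda$. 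Therefore $e \leq e(n)$.

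If $\Lambda_U[S]$ is connected but has a cut vertex $v$, write $S = S_1 \cup S_2$ with $S_1 \cap S_2 = \{v\}$ and $|S_i|\geq 2$, so $|S_1|+|S_2|=n+1$. Since $v$ separates $\Lambda_U[S]$, there are no edges of $\Lambda_U[S]$ between $S_1\setminus\{v\}$ and $S_2\setminus\{v\}$, giving $e = |E(\Lambda_U[S_1])|+|E(\Lambda_U[S_2])| \leq e(|S_1|)+e(|S_2|)$ by induction (again with direct bounds when $|S_i|=2$). The vertex-gluing super-additivity $e(n_1)+e(n_2) \leq e(n_1+n_2-1)$ follows from the formula and from translating optimal subgraphs to share exactly one vertex. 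Combining, $e \leq e(n)$ in this case too.

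The main technical point is verifying the two super-additivity inequalities for the piecewise formula defining $e(n)$, with particular care around the exceptional values $n = 24k^2 - 24k + 7$ where $e(n)$ takes the special form $6n-4\sqrt{6n-6}$. I do not expect this to be a serious obstacle, since both inequalities reduce to estimates of the form $\sqrt{a}+\sqrt{b} \geq \sqrt{a+b}+O(1)$ that are easily checked by squaring, and the boundary cases $|S_i|=2$ are absorbed by the gap $e(n)-e(n-1)\geq 3$ visible in Table~\ref{tableBC}. Together these arguments reduce the inductive step to the $2$-connected case, as required.
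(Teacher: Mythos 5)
Your decomposition is genuinely different from the paper's and your overall structure is sound, but there is a real logical flaw in how you try to justify the key inequality.

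The paper removes the cut vertex $v$ entirely, obtaining parts of sizes $n_1+n_2=n-1$, and then needs to control the $\deg(v)$ term separately; it does this by observing via Menger's theorem that the neighbourhood of $v$ in $\Lambda_U$ is $4$-connected, so a cut vertex has degree at most $8$ in $\Lambda_U[S]$. You instead keep $v$ in both parts (so $|S_1|+|S_2|=n+1$), which eliminates the degree term and trades the Menger argument for a super-additivity inequality on $e(\cdot)$. That trade-off is legitimate, and in fact the inequality
\[
\sqrt{96n_1-96}+\sqrt{96n_2-96}\ \geq\ 6+\sqrt{96(n_1+n_2-1)-63}
\]
(from the bounds $e(m)\leq 6m-4\sqrt{6m-6}$ and $e(m)\geq \lfloor 6m-\sqrt{96m-63}\rfloor$) does hold for all $n_1,n_2\geq 3$, with the tight case $n_1=n_2=3$ giving $384\geq 12\sqrt{417}+69\approx 314$. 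So the approach is salvageable.

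The flaw is in your ``geometric witness.'' Gluing two sets at a single vertex only shows $M(n_1+n_2-1)\geq M(n_1)+M(n_2)$, where $M(m)$ denotes the true maximum edge count over $m$-element subsets. But what you need inside the induction is super-additivity of the \emph{formula} $e(\cdot)$, since the inductive hypothesis only gives $|E(\Lambda_U[S_i])|\leq e(|S_i|)$. At the point where this claim is proved, it is not yet known that $M(m)=e(m)$ for $m<n$ (the upper bound gives $M(m)\leq e(m)$, and achievability is only established later in Section~\ref{sec:seq}); so the geometric argument is circular for the purpose at hand: combined with $e=|E(S_1)|+|E(S_2)|\leq M(n_1)+M(n_2)$ it just gives $e\leq M(n)$, which is the definition of $e$. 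The super-additivity must therefore be proved algebraically, which you explicitly defer as ``not a serious obstacle.'' It is indeed provable, but as written your argument has a genuine gap: the claimed geometric proof does not prove what is needed, and the algebraic proof is absent.

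Two small additional remarks. Your preliminary observation only rules out isolated vertices, not disconnectedness; you correctly fold the disconnected case into the super-additivity argument, but this is a weaker opening move than the paper's (which establishes connectivity directly by translating a whole component). And for the $|S_i|=2$ boundary cases you appeal to ``$e(n)-e(n-1)\geq 3$ visible in Table~\ref{tableBC}''; the table only runs to $n=55$, so you would need to argue the gap in general (which is easy, since $e(n)-e(n-1)\in\{5,6\}$ for $n\geq 56$), but this should be stated rather than read off the table.
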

\begin{proof}
    \(\Lambda_U[S]\) must be connected, otherwise we could translate a connected component of \(\Lambda_U[S]\) to form additional edges contradicting the maximality of \(\Lambda_U[S]\). Suppose \(\Lambda_U[S]\) had a cut vertex \(v\). When we remove \(v\) from \(\Lambda_U[S]\) we create two connected components \(G_1=\Lambda_U[S_1]\) and \(G_2=\Lambda_U[S_2]\) with \(n_1\) and \(n_2\) vertices, and \(e_1\) and \(e_2\) edges.
    
    \textbf{Case 1: \(n_1 < 4\) or \(n_2 < 4\).}
    Without loss of generality suppose \(n_1\leq 3\). If \(n_1=1\), there is \(1\) edge removed when deleting \(S_1\) from \(\Lambda_U[S]\). Applying the inductive hypothesis to the remaining graph we obtain \(e\leq e(n-1)+1\leq e(n)\).
    So supposing that \(n_1\in \{2,3\}\), then there are at most \(6\) edges removed when deleting \(G_1\) from \(\Lambda_U\). Then the inductive hypothesis implies
    \begin{align*}
    e\leq 6(n-n_1)-4\sqrt{6(n-n_1)-6}+6
    \leq 6n-\sqrt{96n-63} \text{ for }n_1=2,3 \text{ if }n\geq 6.
    \end{align*}
    
    \textbf{Case 2: \(n_1, n_2\geq 4\).} Applying the inductive hypothesis to \(G_1\) and \(G_2\) we obtain
    \begin{align*}
    e&\leq 6n_1-4\sqrt{6n_1-6}+6n_2-4\sqrt{6n_2-6}+\deg(v)
    \\&\leq 6n-6-4\sqrt{6(n-5)-6}-4\sqrt{18}+\deg(v).
    \end{align*}
    We can bound the degree of \(v\) by noticing the neighbourhood of \(v\) must be disconnected. It is easy to see through Menger's theorem that the neighbourhood of \(v\) in \(\Lambda_U\) is \(4\)-connected, which implies that the degree of \(v\) in \(\Lambda_U[S]\) is at most \(8\), hence
    \[
    e\leq6n-6-4\sqrt{6(n-5)-6}-4\sqrt{24}+8\leq 6n-\sqrt{96n-63} \text{ for }n\geq 7. \qedhere
    \]
    
\end{proof}

\begin{claim}\label{cla:noline}
    We may assume there is no line parallel to an element in \(U\) intersecting \(\Lambda\) but disjoint from $S$, and with vertices from $S$ on either side of the line.
\end{claim}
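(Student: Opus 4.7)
The plan is to use a shifting argument. Assuming such a line $\ell$ parallel to some $u\in U$ exists, I would partition $S=S_1\sqcup S_2$ by the two open half-planes bounded by $\ell$ (both nonempty) and form $S^{*}:=S_1\cup(S_2+v)$, for a lattice vector $v$ whose component perpendicular to $\ell$ points from $S_2$ toward $S_1$ with magnitude equal to one lattice-line spacing. The aim is then $|S^{*}|=n$ and $|E(\Lambda_U[S^{*}])|\geq|E(\Lambda_U[S])|$; iterating with a strictly-decreasing potential will yield a maximum-edge $n$-set with no gap line, so we may assume no such $\ell$ exists.

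By the dihedral symmetry of $\Lambda_U$ (generated by $60^{\circ}$ rotations and reflections), I would reduce to $u\in\{g_1,\,g_1+g_2\}$. Disjointness $S_1\cap(S_2+v)=\emptyset$ will follow because $S_2+v$ extends only up to $\ell$ on the $S_2$-side, while $S_1$ lies strictly on the opposite side. For edge preservation, internal edges are unaffected and only edges crossing $\ell$ need checking; such edges $(p,q)$ with $p\in S_1,q\in S_2$ have $q-p\in U$ with perpendicular component large enough to span $\ell$. For $u=g_1$ the only such $U$-difference is $2g_2-g_1$, and both $v=-g_2$ and $v=g_1-g_2$ give $(2g_2-g_1)+v\in\{g_2-g_1,g_2\}\subset U$, so all crossing edges survive. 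For $u=g_1+g_2$, using the projection $\pi(ag_1+bg_2)=b-a$, the crossing differences are $q-p\in\{g_2-g_1,\,2g_2-g_1,\,g_2-2g_1\}$, and I would case-split on which of these occur in $S$ to choose $v\in\{-g_2,g_1,g_1-g_2\}$ so that every occurring crossing is preserved: if no $(g_2-g_1)$-crossing exists then $v=g_1-g_2$ both keeps $S_2+v$ disjoint from $S_1$ (overlap would require exactly such a crossing) and preserves the two long crossings via $(2g_2-g_1)+v=g_2$, $(g_2-2g_1)+v=-g_1$; otherwise $v=-g_2$ (resp.\ $v=g_1$) handles the sub-case when no $(g_2-2g_1)$-crossing (resp.\ $(2g_2-g_1)$-crossing) is present.

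To ensure termination I would use the translation-invariant potential $\Phi(S)=\sum_{p,q\in S}\|p-q\|^{2}$, which takes integer values on $\Lambda$. A direct calculation gives
\[
\Phi(S^{*})-\Phi(S)=|S_1|\,|S_2|\bigl(\|v\|^{2}-2\langle c_1-c_2,\,v\rangle\bigr),
\]
with $c_i$ the centroid of $S_i$. The perpendicular component of $c_1-c_2$ has magnitude at least twice that of $v_{\perp}$ and the same sign; by choosing the parallel component of $v$ to align with that of $c_1-c_2$, I would get $\langle c_1-c_2,v\rangle\geq\|v\|^{2}/2$, with strict inequality outside a degenerate family of configurations (centroids aligned perpendicularly to $\ell$ and each $S_i$ on a single lattice line), which can be dealt with separately. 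Hence $\Phi$ drops by at least $1$ per step generically, and since $\Phi\geq 0$ the iteration halts.

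The main obstacle will be the edge-preservation check for $u=g_1+g_2$ when all three crossing types coexist, since then no single minimal shift preserves every edge. I expect to resolve this by counting newly-created edges from the shift (pairs $(p,q)$ with $q-p\notin U$ but $(q+v)-p\in U$) against the lost ones, and by exploiting the $2$-connectedness of $S$ established in the previous claim to restrict the realizable configurations.
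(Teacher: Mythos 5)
Your plan has the right overall shape — a shifting argument, symmetry reduction to one short direction $g_1$ and one long direction $g_1+g_2$, and the identification of the crossing differences — and your short-edge case $u=g_1$ is essentially the paper's argument. But the long-edge case has a genuine gap that you yourself flag, and you don't have the idea that actually closes it.

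In the paper, when $L$ is parallel to a long edge, one does \emph{not} try to find a shift that avoids overlaps: one simply applies the unit perpendicular shift (a $\pi$-difference of $2$), accepts that pairs of $S$-points joined by a short edge perpendicular to $L$ will collapse, and bounds the number of edges lost. The key ingredient you are missing is that this shift reduces the vertex count from $n$ to $n-m$, so one can invoke the \emph{outer inductive hypothesis} of Theorem~\ref{the:e} (the claim being proved is embedded in an induction on $n$) to bound the resulting edge count by $e(n-m)$, and then one shows $e\leq e(n-m)+4m\leq e(n)$ (with a refined $3m$ bound and a short finite check for $n=8,9$). Obtaining the $4m$ loss bound itself requires a careful three-way classification of which crossing edges are destroyed (overlap pairs, shared neighbours of an overlap pair, and parallel long edges between overlap pairs), together with the observation that some of these losses are compensated by newly created edges. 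None of this is captured by your sketch "count newly-created edges... exploit $2$-connectedness"; the $2$-connectedness is only used to guarantee at least two crossing edges, not to control the overlap case. Separately, your termination via $\Phi(S)=\sum\|p-q\|^2$ has a sign/factor error in the displayed identity and an unresolved "degenerate family"; the paper's termination is the far simpler observation that the convex hull area strictly decreases at each overlap-free shift, so the iteration halts.

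Concretely, the unresolved case is: $L$ parallel to $g_1+g_2$, with crossing edges of all three types $g_2-g_1$, $2g_2-g_1$, $g_2-2g_1$ simultaneously present. No shift in $\{-g_2,\,g_1,\,g_1-g_2\}$ preserves all of them without overlaps; for example $v=g_1-g_2$ preserves both long types but forces an overlap on every $(g_2-g_1)$-crossing. Until you either find a configuration-dependent shift that provably never loses net edges here, or adopt the paper's strategy of allowing overlaps, bounding the loss by $4m$, and feeding the shrunken set back into the induction on $n$, the claim is not proved.
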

\begin{proof}
    Suppose there is such a line \(L\). Since \(\Lambda_U[S]\) is \(2\)-connected there must be at least two edges of \(\Lambda_U[S]\) that cross \(L\). If \(L\) is parallel to a short edge (say parallel to $g_1$), the only edges of \(\Lambda_U[S]\) that can cross \(L\) will be long edges perpendicular to \(L\) (in direction $g_1-2g_2$). We shift a set of vertices on one side of the line towards \(L\) along one of the directions in $U$. This is depicted in Figure~\ref{fig:Unitshift}, where the part of $S$ above $L$ is shifted by $-g_2$. Notice that each edge that crossed \(L\) is maintained after this shift, so the edges of \(\Lambda_U[S]\) can only increase. This process must stop after a finite number of shifts as the area of \(\Lambda_U[S]\) strictly decreases.

    \begin{figure}
    \centering
    \begin{subfigure}[t]{0.4\textwidth}
\begin{tikzpicture}[>=Stealth,scale=0.7]
\def\points{(e1),(e2),($(e1)+(e2)$),($2*(e2)-(e1)$),($(e2)-(e1)$), ($-2*(e1)+(e2)$),($-1*(e1)$),($-1*(e2)$),($-1*(e1)-(e2)$),($-2*(e2)+(e1)$),($-1*(e2)+(e1)$), ($2*(e1)-(e2)$)}
\def\bpoints{(e2),($-1*(e1)+(e3)$),($(e1)+(e2)$),($(e3)+(e2)$),($(e1)-(e2)$),($-1*(e1)-(e2)$),($-1*(e1)-(e3)-(e2)$), ($-1*(e2)$)}
\coordinate (e1) at (1,0);
\coordinate (e2) at (60:1);
\coordinate (e3) at (120:1);
\begin{scope}[on background layer]
\clip (-3.1,-2) rectangle (2.6,2);
\foreach \x in {-5,...,5} 
    {
    \foreach \y in {-3,...,3}
    {
    \coordinate (cur) at ($\x*(e1)+\y*(e2)$);
    \filldraw[draw=black] (cur) circle [radius=0.025];
    }
    }
\draw[red!60!white, dashed] ($-5*(e1)$)--($5*(e1)$);
\end{scope}

\foreach \p in \bpoints 
    {
    \filldraw[draw=black] \p circle [radius=0.05];
    }
\draw (e2)--($(e1)+(e2)$);
\draw (e2)--($2*(e2)-(e1)$);
\draw ($2*(e2)-(e1)$)--($(e1)+(e2)$);
\draw ($(e2)$)--($(e1)-(e2)$);
\draw ($(e3)-(e1)$)--($2*(e2)-(e1)$);
\draw ($(e3)-(e1)$)--($-1*(e2)-(e1)$);
\draw ($-2*(e2)$)--($-1*(e2)-(e1)$);
\draw ($-2*(e2)$)--($-1*(e3)$);
\draw ($-1*(e2)$)--($-1*(e2)+(e1)$);
\draw ($-1*(e2)$)--($-1*(e2)-(e1)$);
\draw ($-1*(e2)$)--($-2*(e2)$);

\end{tikzpicture}
        \caption{\(L\) parallel to a short edge}
    \end{subfigure}
    \hfill
    \begin{subfigure}[t]{0.4\textwidth}
\begin{tikzpicture}[>=Stealth,scale=0.7]
\def\points{(e1),(e2),($(e1)+(e2)$),($2*(e2)-(e1)$),($(e2)-(e1)$), ($-2*(e1)+(e2)$),($-1*(e1)$),($-1*(e2)$),($-1*(e1)-(e2)$),($-2*(e2)+(e1)$),($-1*(e2)+(e1)$), ($2*(e1)-(e2)$)}

\def\bpoints{(0,0),($-1*(e1)+(e3)-(e2)$),($(e1)$),($(e3)$),($(e1)-(e2)$),($-1*(e1)-(e2)$),($-1*(e1)-(e3)-(e2)$), ($-1*(e2)$)}
\coordinate (e1) at (1,0);
\coordinate (e2) at (60:1);
\coordinate (e3) at (120:1);
\begin{scope}[on background layer]
\clip (-3.1,-2) rectangle (2.6,2);
\foreach \x in {-5,...,5} 
    {
    \foreach \y in {-3,...,3}
    {
    \coordinate (cur) at ($\x*(e1)+\y*(e2)$);
    \filldraw[draw=black] (cur) circle [radius=0.025];
    }
    }
\draw[red!60!white, dashed] ($-5*(e1)$)--($5*(e1)$);
\end{scope}

\foreach \p in \bpoints 
    {
    \filldraw[draw=black] \p circle [radius=0.05];
    }
\draw (0,0)--($(e1)$);
\draw (0,0)--($(e2)-(e1)$);
\draw ($(e2)-(e1)$)--($(e1)$);
\draw ($(0,0)$)--($-1*(e3)$);
\draw ($-2*(e1)$)--($(e2)-(e1)$);
\draw ($-2*(e1)$)--($-1*(e2)-(e1)$);
\draw ($-2*(e2)$)--($-1*(e2)-(e1)$);
\draw ($-2*(e2)$)--($-1*(e3)$);
\draw ($(0,0)$)--($-1*(e3)-2*(e1)$);
\draw ($(e1)$)--($-1*(e3)$);
\draw ($-1*(e2)$)--($-1*(e2)+(e1)$);
\draw ($-1*(e2)$)--($-1*(e2)-(e1)$);
\draw ($-1*(e2)$)--($-2*(e2)$);
\draw ($-1*(e2)$)--($(e1)$);
\draw ($-1*(e2)$)--($(0,0)$);
\draw ($-1*(e2)$)--($-2*(e1)$);
\draw ($-1*(e2)$)--($(e3)$);
\end{tikzpicture}
        \caption{Translate vertices above \(L\) by \(-g_2\)}
    \end{subfigure}
    \caption{Process depicting shift when \(L\) is parallel to a short edge}
    \label{fig:Unitshift}
    \end{figure}
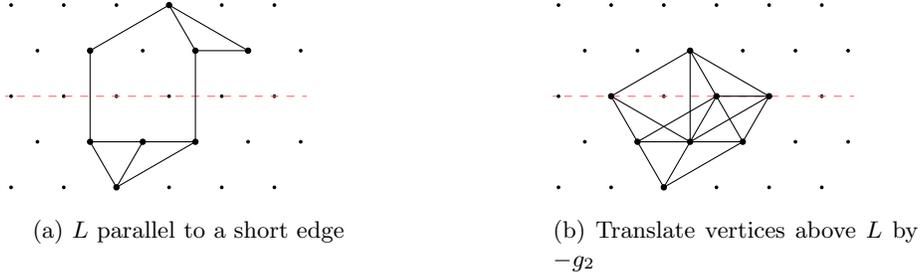

    Now suppose \(L\) is parallel to a long edge (say parallel to $2g_1-g_2$ as in Figure~\ref{subfig}). We apply a unit shift perpendicular to \(L\) (direction $-g_2$ in Figure~\ref{subfigb}) to the set of vertices on one side of \(L\). After this shift vertices may overlap and edges may be lost. We claim there are at most \(4m\) edges lost during this shift, where \(m\) is the number of vertices that overlap. We will show this by deleting edges before the shift so that the remaining edges are not lost after the shift.

    A pair of vertices in \(S\) overlap if they lie on different sides of $L$ and are connected by a short edge perpendicular to $L$. There are three types of edges we need to delete. The first type are edges connecting pairs of vertices that overlap. The second type is when a pair of vertices, $v_1$ and $v_2$ (Figure~\ref{subfig}), overlap and a vertex $v$ is adjacent to both of them (for example, $v_3$ or $v_4$ in Figure~\ref{subfig}). Since after the shift the edges $v_1v$ and $v_2v$ overlap, we will delete one of these edges. The third type is when $v_1'$ and $v_2'$ are another pair of vertices that overlap and are adjacent to $v_1$ and $v_2$ respectively, by a long edge parallel to $L$. In this case the edges $v_1v_1'$ and $v_2v_2'$ overlap after the shift so we will delete one of them.

    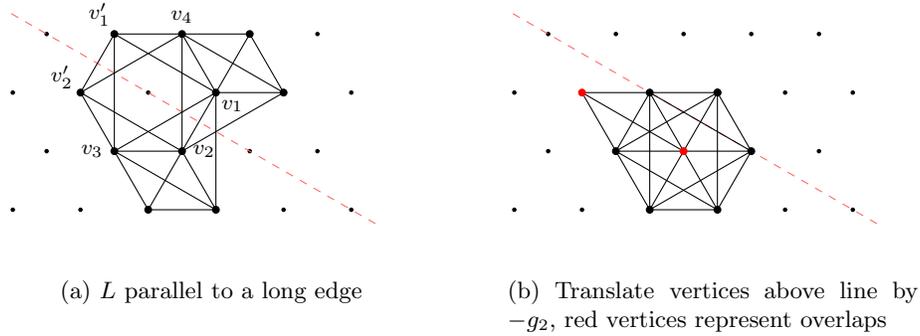
\begin{figure}
    \centering
    \begin{subfigure}[t]{0.45\textwidth}
\begin{tikzpicture}[>=Stealth,scale=0.9]
\def\points{(e1),(e2),($(e1)+(e2)$),($2*(e2)-(e1)$),($(e2)-(e1)$), ($-2*(e1)+(e2)$),($-1*(e1)$),($-1*(e2)$),($-1*(e1)-(e2)$),($-2*(e2)+(e1)$),($-1*(e2)+(e1)$), ($2*(e1)-(e2)$)}

\def\bpoints{(0,0),($-1*(e1)+(e3)-(e2)$),($(e1)$),($(e3)$),($(e3)-(e1)$),($-1*(e1)-(e2)$),($-1*(e1)-(e3)-(e2)$), ($-1*(e2)$), (e2), ($-1*(e3)-(e2)$)}
\coordinate (e1) at (1,0);
\coordinate (e2) at (60:1);
\coordinate (e3) at (120:1);
\begin{scope}[on background layer]
\clip (-3.1,-2.5) rectangle (2.4,1.5);
\foreach \x in {-5,...,5} 
    {
    \foreach \y in {-3,...,3}
    {
    \coordinate (cur) at ($\x*(e1)+\y*(e2)$);
    \filldraw[draw=black] (cur) circle [radius=0.025];
    }
    }
\draw[red!60!white, dashed] ($-1*(e1)-5*(e1)+5*(e3)$)--($-1*(e1)+5*(e1)-5*(e3)$);
\end{scope}

\foreach \p in \bpoints 
    {
    \filldraw[draw=black] \p circle [radius=0.05];
    }
\draw (0,0)--($(e1)$);
\draw (0,0)--($(e2)-(e1)$);
\draw ($(e2)-(e1)$)--($(e1)$);
\draw ($-2*(e1)$)--($(e2)-(e1)$);
\draw ($-2*(e1)$)--($-1*(e2)-(e1)$);
\draw ($-2*(e2)$)--($-1*(e2)-(e1)$);
\draw ($(0,0)$)--($-1*(e3)-2*(e1)$);
\draw ($-1*(e2)$)--($-1*(e2)-(e1)$);
\draw ($-1*(e2)$)--($-2*(e2)$);
\draw ($-1*(e2)$)--($(e1)$);
\draw ($-1*(e2)$)--($(0,0)$);
\draw ($-1*(e2)$)--($-2*(e1)$);
\draw ($-1*(e2)$)--($(e3)$);
\draw ($-1*(e1)+(e3)$)--($(e3)$);
\draw ($-1*(e1)+(e3)$)--($-2*(e1)$);
\draw ($-1*(e1)+(e3)$)--($-1*(e2)-(e1)$);
\draw ($-1*(e1)+(e3)$)--(0,0);
\draw ($(e2)$)--($1*(e1)$);
\draw ($(e2)$)--($1*(e3)$);
\draw ($(e2)$)--(0,0);
\draw ($-1*(e2)-(e3)$)--($-2*(e2)$);
\draw ($-1*(e2)-(e3)$)--($-1*(e2)$);
\draw ($-1*(e2)-(e3)$)--($-1*(e2)-(e1)$);
\draw ($-1*(e2)-(e3)$)--(0,0);
\node at (0,0) [label={[label distance=-2mm]330:\footnotesize{$v_1$}}] {};
\node at ($-1*(e2)$) [label={[label distance=-1mm]-2:\footnotesize{$v_2$}}] {};
\node at ($-2*(e1)$) [label={[label distance=-2mm]110:\footnotesize{$v_2'$}}] {};
\node at ($-1*(e1)+(e3)$) [label={[label distance=-2mm]170:\footnotesize{$v_1'$}}] {};
\node at ($(e3)$) [label={[label distance=-1mm]90:\footnotesize{$v_4$}}] {};
\node at ($-1*(e1)-(e2)$) [label={[label distance=-1mm]180:\footnotesize{$v_3$}}] {};

\end{tikzpicture}
        \caption{\(L\) parallel to a long edge}
        \label{subfig}
    \end{subfigure}
    \hfill
    \begin{subfigure}[t]{0.45\textwidth}

\begin{tikzpicture}[>=Stealth,scale=0.9]
\def\points{(e1),(e2),($(e1)+(e2)$),($2*(e2)-(e1)$),($(e2)-(e1)$), ($-2*(e1)+(e2)$),($-1*(e1)$),($-1*(e2)$),($-1*(e1)-(e2)$),($-2*(e2)+(e1)$),($-1*(e2)+(e1)$), ($2*(e1)-(e2)$)}
\def\bpoints{,($-1*(e3)$),($-1*(e1)$),($-1*(e1)-(e2)$),($-1*(e1)-(e3)-(e2)$), (0,0), ($-1*(e3)-(e2)$)}
\coordinate (e1) at (1,0);
\coordinate (e2) at (60:1);
\coordinate (e3) at (120:1);
\begin{scope}[on background layer]
\clip (-3.1,-2.5) rectangle (2.4,1.5);
\foreach \x in {-5,...,5} 
    {
    \foreach \y in {-3,...,3}
    {
    \coordinate (cur) at ($\x*(e1)+\y*(e2)$);
    \filldraw[draw=black] (cur) circle [radius=0.025];
    }
    }
\draw[red!60!white, dashed] ($-1*(e1)-5*(e1)+5*(e3)$)--($-1*(e1)+5*(e1)-5*(e3)$);
\foreach \p in \bpoints 
    {
    \filldraw[draw=black] \p circle [radius=0.05];
    }
\draw ($-1*(e1)$)--($-2*(e1)$);
\draw ($-1*(e1)$)--($-1*(e1)-(e2)$);
\draw ($-1*(e1)$)--($-1*(e3)$);
\draw ($-1*(e1)$)--($-2*(e2)$);
\draw ($-1*(e2)$)--($-1*(e1)$);
\draw ($-2*(e1)$)--($-1*(e2)-(e1)$);
\draw ($-1*(e2)$)--($-2*(e1)$);
\draw ($-1*(e2)$)--($-1*(e3)$);
\draw ($-1*(e2)$)--($-1*(e2)-(e1)$);
\draw ($-1*(e2)$)--($-2*(e2)$);
\draw ($-1*(e2)-(e1)$)--($-2*(e2)$);
\draw ($-1*(e2)-(e3)$)--($-2*(e2)$);
\draw ($-1*(e2)-(e3)$)--($-1*(e2)$);
\draw ($-1*(e2)-(e3)$)--($-1*(e2)-(e1)$);
\draw ($-1*(e2)-(e3)$)--(0,0);
\draw ($-1*(e1)$)--(0,0);
\draw ($-1*(e2)$)--(0,0);
\draw ($-1*(e3)$)--(0,0);
\draw ($-1*(e2)-(e3)$)--($-1*(e3)$);
\draw ($-2*(e2)$)--($-1*(e3)$);
\draw ($-1*(e2)-(e1)$)--(0,0);
\end{scope}
\filldraw[fill=red,draw=red] ($-1*(e2)$) circle [radius=0.05];
\filldraw[fill=red,draw=red] ($-2*(e1)$) circle [radius=0.05];
\end{tikzpicture}
        \caption{Translate vertices above line by \(-g_2\), red vertices represent overlaps}
        \label{subfigb}
    \end{subfigure}
    \caption{Process depicting shift when \(L\) is parallel to a long edge}
    \label{fig:sqrt3shift}
    \end{figure}

    For each pair of vertices that overlap we delete one edge of the first type, which implies $m$ edges of this type are deleted. A pair of vertices that overlap have at most $4$ vertices adjacent to both of them. Each of these vertices correspond to one edge that must be deleted, which implies at most $4$ edges of this type are deleted per overlap. It is a simple case analysis to confirm in the case $4$ edges of this type are deleted at an overlap, there are at least two new edges created after the shift. Similarly, if $3$ edges of this type are deleted, then at least $1$ new edge will be created after the shift. In all cases, in total we only lose at most $2$ edges of this type per overlap, implying $2m$ edges of this type are lost after the shift. Since each pair of vertices that overlap has at most two other overlapping pairs connected by long edges parallel to $L$, it is easily seen that at most $m$ edges of the third type are lost.

    All together this shows that at most $4m$ edges are lost during this shift. If \(m=0\) then all edges that cross \(L\) are maintained, and since the area of \(\Lambda_U[S]\) strictly decreases, this process must stop after a finite number of shifts. If \(m\geq 1\) we apply the inductive hypothesis to the resulting graph after the shift and obtain 
    \begin{align*}
    e&\leq 6(n-m)-4\sqrt{6(n-m)-6}+4m
    \\&\leq 6n-\sqrt{96n-63} \text{ for all } n\geq \max\{2m,10\}.
    \end{align*}

    If no edges of the third type are lost, we lose at most $3m$ edges from the shift. Applying the inductive hypothesis to the resulting graph we obtain:
    \begin{align*}
    e&\leq 6(n-m)-4\sqrt{6(n-m)-6}+3m
    \\&\leq 6n-\sqrt{96n-63} \text{ for all } n\geq \max\{2m,7\}.
    \end{align*}

    To finish the proof when $L$ is parallel to a long edge, it remains to be shown: if there are edges lost of the third type (implying $m\geq 2$), and $n=8$ or $9$, then $e\leq e(n)$. Since in this case there are edges lost of the third type there exists two overlapping pairs of vertices $v_1,v_2$ and $v_1',v_2'$ that are connected by long edges parallel to $L$. This implies $v_1$ is not adjacent to $v_2'$ and $v_2$ is not adjacent to $v_1'$. It is easily seen from Figure \ref{subfig} there are exactly two elements of $\Lambda\setminus{L}$, say  $v_3$ and $v_4$, that are adjacent to all $v_1,v_2,v_1'$ and $v_2'$. Every other element of $\Lambda\setminus{L}$ is adjacent to at most two of $v_1,v_2,v_1'$ and $v_2'$. 

    We distinguish between three cases:
    
    \textbf{\(S\) contains $v_3$ and $v_4$.} Then $v_1,v_2,v_3,v_2',v_1'$ and $v_4$ form the vertices of a regular unit hexagon. Place a vertex at the centre of the hexagon creating at least $6$ new edges. Since $n+1<12$ there is a vertex with degree at most $5$ which when deleted results in an $n$-vertex graph with more edges then \(\Lambda_U[S]\), so \(\Lambda_U[S]\) was not extremal.

    \textbf{\(S\) does not contain $v_3$ and $v_4$.} This implies in the case $n=8$ that $e\leq \binom{8}{2}-2-4\times 2=18 <e(8)$. In the case $n=9$ we obtain $e\leq \binom{9}{2}-2-5\times2=24<e(n)$.

    \textbf{\(S\) contains $v_3$ or $v_4$ but not both.} Without loss of generality we may assume it contains $v_3$. From Figure \ref{subfig} it is easily verified that in $\Lambda\setminus{L\cup \{v_4\}}$ there are only three vertices adjacent with three out of the five vertices $v_1,v_2,v_1',v_2'$ and $v_3$. The rest are adjacent with at most two. This implies in the case $n=8$ that $e\leq \binom{8}{2}-2-3\times 2=20 <e(8)$ and in the case $n=9$ we obtain $e\leq \binom{9}{2}-2-3\times 2-3=25 =e(9)$.

\end{proof}

\begin{claim}\label{cla:defG}
    $\partial(\operatorname{hull}(S))\leq \partial(S)$.
\end{claim}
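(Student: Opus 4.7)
The plan is to prove $\partial(\operatorname{hull}(S))\leq\partial(S)$ by decomposing the edge boundary according to the six direction pairs $\{u,-u\}\subseteq U$, and within each, line by line. For a fixed $u\in U$, every edge of $\Lambda_U$ in direction $\pm u$ lies on a unique lattice line parallel to $u$; on such a line $\ell$ the consecutive lattice points form a bi-infinite path in $\Lambda_U$, connected by either short or long edges depending on whether $u$ is short or long. For any finite $X\subseteq\Lambda$, the contribution of edges in direction $\pm u$ on $\ell$ to $\partial(X)$ therefore equals $2k_\ell(X)$, where $k_\ell(X)$ is the number of maximal intervals of $X\cap\ell$. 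Applied to $X=\operatorname{hull}(S)$, convexity of $\operatorname{hull}(S)$ forces $\operatorname{hull}(S)\cap\ell$ to be either empty or a single interval, so its contribution is $0$ or $2$.

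The key step is to show that for every lattice line $\ell$ parallel to $u$, $S\cap\ell=\emptyset$ if and only if $\operatorname{hull}(S)\cap\ell=\emptyset$. If $\ell$ lies strictly outside the two supporting lines of $S$ parallel to $u$, both are empty. If $\ell$ is one of the supporting lines themselves, both are non-empty by the definition of a supporting line. If $\ell$ lies strictly between the two supporting lines, then $\operatorname{hull}(S)\cap\ell$ is non-empty because the strip bounded by the supporting lines coincides with the perpendicular range of the 12-gon $\operatorname{hull}(S)$, and Claim~\ref{cla:noline} precisely forbids a gap-line, so $S\cap\ell\neq\emptyset$ as well. Consequently, on each line $\ell$ the contribution of $S$ is either $0$ matching $\operatorname{hull}(S)$'s $0$, or at least $2$ matching or exceeding $\operatorname{hull}(S)$'s $2$.

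Summing the per-line inequalities over all lattice lines parallel to $u$, and then over the six direction pairs in $U$, yields $\partial(S)\geq\partial(\operatorname{hull}(S))$. The main obstacle is purely bookkeeping: verifying that the decomposition assigns each boundary edge of $\Lambda_U$ uniquely to one direction pair and one line (immediate from the fact that edges have well-defined directions in $U$), and invoking Claim~\ref{cla:noline} in the right generality so that both the strict-interior lattice lines and the supporting lines of $S$ are covered. No induction, local rearrangement, or additional geometric input beyond the convexity of $\operatorname{hull}(S)$ is required.
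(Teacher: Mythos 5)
Your proof is correct and follows essentially the same strategy as the paper: both decompose the boundary by the lattice lines parallel to each direction in $U$, use convexity of $\operatorname{hull}(S)$ to get exactly two boundary crossings per line, and invoke Claim~\ref{cla:noline} to guarantee that each such line also meets $S$, hence contributes at least two crossings to $\partial(S)$. The paper packages this as an explicit injection of boundary edges rather than your per-line count $2k_\ell(X)$, but the two bookkeeping devices are interchangeable.
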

\begin{proof}
    To show this, we create an injection from the set \(B_1=\{ uv\in E(\Lambda_U): u\in \operatorname{hull}(S), v\notin \operatorname{hull}(S)\}\) to \(B_2=\{ uv\in E(\Lambda_U): u\in S, v\notin S\}\). Consider an element \(u\in \operatorname{hull}(S)\) with \(uv\in E(\Lambda_U)\) and \(v\notin \operatorname{hull}(S)\). Let \(L\) be the line containing this edge. Since the boundary of \(\Lambda_U[\operatorname{hull}(S)]\) is convex, this line will contain exactly two edges in \(B_1\).
    By Claim \ref{cla:noline}, \(L\) must intersect a vertex of \(\Lambda_U\). This implies \(L\) must contain at least two edges in \(B_2\).
    
    The injection will map the two edges contained in \(L\) in the set \(B_1\) to some choice of two edges contained in \(L\) in the set \(B_2\). This map will be injective as no two disjoint lines each containing an element in \(B_1\) have an intersection that contains an element in \(B_2\).
\end{proof}

\begin{claim}\label{cla:neqP}
    If \(\Lambda_U[S]\notin \mathscr{P}\) then \(e\leq 6n-\sqrt{96n-63}\) 
\end{claim}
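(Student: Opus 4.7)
The plan is to compare $\Lambda_U[S]$ to $\Lambda_U[\operatorname{hull}(S)]\in\mathscr{P}$ and apply Theorem~\ref{the:P} to the latter. Since $\Lambda_U[S]\notin\mathscr{P}$, the set $\operatorname{hull}(S)\setminus S$ is nonempty. Let $k:=|\operatorname{hull}(S)\setminus S|\geq 1$, so that $\operatorname{hull}(S)$ has $n+k$ vertices.

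First I would relate the two edge counts. By Claim~\ref{cla:defG} we have $\partial(\operatorname{hull}(S))\leq \partial(S)$, hence by \eqref{eq:eindef}
\[
e=6n-\tfrac{\partial(S)}{2}\leq 6n-\tfrac{\partial(\operatorname{hull}(S))}{2}.
\]
On the other hand, applying \eqref{eq:eindef} to $\operatorname{hull}(S)$ together with Theorem~\ref{the:P} (which is valid since $n+k\geq n\geq 8>3$) gives
\[
6(n+k)-\tfrac{\partial(\operatorname{hull}(S))}{2}=|E(\Lambda_U[\operatorname{hull}(S)])|\leq e(n+k),
\]
so $\partial(\operatorname{hull}(S))/2\geq 6(n+k)-e(n+k)$. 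Combining the two inequalities yields the clean bound
\[
e\leq e(n+k)-6k.
\]

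The remaining step is to verify that $e(n+k)-6k\leq 6n-\sqrt{96n-63}$, splitting on the two cases in the definition of $e$. In the generic case $e(n+k)\leq 6(n+k)-\sqrt{96(n+k)-63}$, so
\[
e(n+k)-6k\leq 6n-\sqrt{96(n+k)-63}\leq 6n-\sqrt{96n-63},
\]
using $k\geq 0$ and monotonicity. In the exceptional case $n+k=24j^2-24j+7$ for some $j$, and $e(n+k)=6(n+k)-4\sqrt{6(n+k)-6}$, so we need $4\sqrt{6(n+k)-6}\geq\sqrt{96n-63}$; squaring reduces this to $96k\geq 33$, which holds precisely because $k\geq 1$. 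This is the only real balancing act in the argument, and I expect it to be the most delicate point: the exceptional shapes in Theorem~\ref{the:P} have noticeably more edges than the generic bound, so one must check that even one extra hull vertex $(k=1)$ is enough to absorb that gap, and the calculation shows it is (with plenty of room to spare).

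Both cases give $e\leq 6n-\sqrt{96n-63}$, completing the proof.
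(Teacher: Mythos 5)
Your proof is correct and follows essentially the same route as the paper: compare $S$ to $\operatorname{hull}(S)$ via Claim~\ref{cla:defG}, apply Theorem~\ref{the:P} to $\operatorname{hull}(S)$, and use the fact that $\operatorname{hull}(S)$ has at least one more vertex than $S$ to absorb the gap. The only cosmetic difference is that you split into the two cases of $e(\cdot)$ and track $k$ explicitly through the bound $e\leq e(n+k)-6k$, whereas the paper shortcuts this by observing that $6n'-4\sqrt{6n'-6}$ is a valid (weaker) upper bound for $e'$ in both cases of Theorem~\ref{the:P}, so no case split is needed.
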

\begin{proof}
    Suppose \(\Lambda_U[S]\notin \mathscr{P}\) which implies \(\Lambda_U[S]\neq \Lambda_U[\operatorname{hull}(S)]\). Suppose that \(\Lambda_U[\operatorname{hull}(S)]\) has \(n'\) vertices and \(e'\) edges. By Theorem \ref{the:P} we have \(e'\leq 6n'-4\sqrt{6n'-6} \) which implies by \eqref{eq:eindef} that \(\partial(\operatorname{hull}(S))\geq 8\sqrt{6n'-6}\). By Claim \ref{cla:defG} we have \(\partial(\operatorname{hull}(S))\leq \partial(S)\) which implies
    \begin{align*}
    e&=6n-\frac{\partial(S)}{2}
    \leq 6n-\frac{\partial(\operatorname{hull}(S))}{2}
     \leq 6n-4\sqrt{6n'-6} 
    \\& \leq 6n-4\sqrt{6(n+1)-6}
     =6n-4\sqrt{6n}
     <6n-\sqrt{96n-63}. \qedhere
    \end{align*}
    
\end{proof}

Theorem \ref{the:P} and Claim \ref{cla:neqP} completes the proof of Theorem \ref{the:e}.

\section{Existence of an ordering}\label{sec:seq}

In Section \ref{sec:upbound} we proved that the only \(12\)-gon in \(\mathscr{P}\) that has \(6n-4\sqrt{6n-6}\) edges when \(n=24k^2-24k+7\) for some \(k\geq 2\), is where \(u_i=k\) and \(t_i=k-1\) \((i=1,\dots,6)\). Furthermore, we showed that any subgraph of \(\Lambda_U\) not in \(\mathscr{P}\) has at most \(\lfloor6n-\sqrt{96n-63}\rfloor\) edges. Since \(6n-4\sqrt{6n-6}>\lfloor6n-\sqrt{96n-63}\rfloor\) for \(n=24k^2-24k+7\) for all \(k\geq 2\), at these special values of \(n\), the \(12\)-gon with \(u_i=k\) and \(t_i=k-1\) \((i=1,\dots,6)\) is the unique extremal graph. 

To obtain an ordering of \(\Lambda\) that attains \(e(n)\) edges for each \(n\), we start off by giving an ordering of the first \(55\) points constituting the unique extremal \(12\)-gon for which \(u_i=2\) and \(t_i=1\) \((i=1,\dots, 6)\). Then we give, for any \(k\geq 3\), an ordering from the (unique) extremal \(12\)-gon with \(u_i=k-1\) and \(t_i=k-2\) \((i=1,\dots,6)\), to the next \(12\)-gon with \(u_i=k\) and \(t_i=k-1\) \((i=1,\dots,6)\). For any \(k\geq 3\), we call the former \(12\)-gon the \emph{initial} \(12\)-gon, and the latter \(12\)-gon the \emph{terminal} \(12\)-gon.

Figure~\ref{fig:basecaseextremal} shows an ordering of \(55\) vertices of \(\Lambda_U\) that, for each \(n\), attain \(e(n)\) many edges from the subgraph of \(\Lambda_U\) induced by the first \(n\) terms. One can verify this using Table \ref{tableBC}.

\begin{figure}
    \centering
\begin{tikzpicture}
\def\points{(e1),(e2),($(e1)+(e2)$),($2*(e2)-(e1)$),($(e2)-(e1)$), ($-2*(e1)+(e2)$),($-1*(e1)$),($-1*(e2)$),($-1*(e1)-(e2)$),($-2*(e2)+(e1)$),($-1*(e2)+(e1)$), ($2*(e1)-(e2)$)}
\def\bpoints{($3*(e2)+(e3)-(e1)$),($3*(e2)+(e3)$),($3*(e2)+(e1)$),($3*(e2)+(e1)-(e3)$),($3*(e1)+(e2)$),($3*(e1)-(e3)$),($3*(e1)-(e3)-(e2)$),($-3*(e3)+(e1)$),($-3*(e3)-(e2)$),($-3*(e3)-(e2)-(e1)$),($-3*(e2)-(e3)$),($-3*(e2)-(e1)$),($-3*(e2)-(e1)+(e3)$),($-3*(e1)-(e2)$),($-3*(e1)+(e3)$),($-3*(e1)+(e3)+(e2)$),($3*(e3)-(e1)$),($3*(e3)+(e2)$)}
\def\ipoints{(0,0), (e1), (e2), (e3), ($-1*(e1)$), ($-1*(e2)$), ($-1*(e3)$),
            ($(e1)+(e1)$), ($(e1)+(e2)$), ($(e1)-(e3)$),
            ($-1*(e1)-(e1)$), ($-1*(e1)-(e2)$), ($-1*(e1)+(e3)$),
            ($(e2)+(e2)$), ($(e2)+(e3)$), ($(e2)+(e3)-(e1)$),
            ($-1*(e2)-(e2)$), ($-1*(e2)-(e3)$), ($-1*(e2)-(e3)+(e1)$)}
\coordinate (e1) at (1,0);
\coordinate (e2) at (60:1);
\coordinate (e3) at (120:1);
\begin{scope}[on background layer]
\clip ($3*(e2)+(e3)$)-- ($3*(e2)+(e1)$) -- ($3*(e1)+(e2)$) --($3*(e1)-(e3)$) -- ($-3*(e3)+(e1)$) -- ($-3*(e3)-(e2)$) -- ($-3*(e2)-(e3)$) -- ($-3*(e2)-(e1)$) -- ($-3*(e1)-(e2)$) -- ($-3*(e1)+(e3)$) -- ($3*(e3)-(e1)$) -- ($3*(e3)+(e2)$);
\filldraw[fill=gray!10!white] ($3*(e2)+(e3)$)-- ($3*(e2)+(e1)$) -- ($3*(e1)+(e2)$) --($3*(e1)-(e3)$) -- ($-3*(e3)+(e1)$) -- ($-3*(e3)-(e2)$) -- ($-3*(e2)-(e3)$) -- ($-3*(e2)-(e1)$) -- ($-3*(e1)-(e2)$) -- ($-3*(e1)+(e3)$) -- ($3*(e3)-(e1)$) -- ($3*(e3)+(e2)$) -- cycle;
\foreach \x in {-5,...,5} 
    {
    \foreach \y in {-5,...,5}
    {
    \coordinate (cur) at ($\x*(e1)+\y*(e2)$);
    \filldraw[draw=black] (cur) circle [radius=0.05];
    }
    }
\end{scope}
\foreach \p in \bpoints 
    {
    \filldraw[draw=black] \p circle [radius=0.05];
    }
\node at ($2*(e3)$) [label={[label distance=-1mm]90:\footnotesize{$1$}}] {};
\node at ($(e3)+(e2)$) [label={[label distance=-1mm]90:\footnotesize{$2$}}] {};
\node at ($(e3)$) [label={[label distance=-1mm]90:\footnotesize{$3$}}] {};
\node at ($(e3)-(e1)$) [label={[label distance=-1mm]90:\footnotesize{$4$}}] {};
\node at ($2*(e3)-(e1)$) [label={[label distance=-1mm]90:\footnotesize{$5$}}] {};
\node at ($3*(e3)$) [label={[label distance=-1mm]90:\footnotesize{$6$}}] {};
\node at ($2*(e3)+(e2)$) [label={[label distance=-1mm]90:\footnotesize{$7$}}] {};
\node at ($2*(e2)+(e3)$) [label={[label distance=-1mm]90:\footnotesize{$8$}}] {};
\node at ($2*(e2)$) [label={[label distance=-1mm]90:\footnotesize{$9$}}] {};
\node at ($(e2)$) [label={[label distance=-1mm]90:\footnotesize{$10$}}] {};
\node at (0,0) [label={[label distance=-1mm]90:\footnotesize{$11$}}] {};
\node at ($-1*(e1)$) [label={[label distance=-1mm]90:\footnotesize{$12$}}] {};
\node at ($(e1)+(e2)$) [label={[label distance=-1mm]90:\footnotesize{$13$}}] {};
\node at ($(e1)$) [label={[label distance=-1mm]90:\footnotesize{$14$}}] {};
\node at ($-1*(e2)$) [label={[label distance=-1mm]90:\footnotesize{$15$}}] {};
\node at ($-2*(e1)$) [label={[label distance=-1mm]90:\footnotesize{$16$}}] {};
\node at ($-2*(e1)+(e3)$) [label={[label distance=-1mm]90:\footnotesize{$17$}}] {};
\node at ($-1*(e2)-(e1)$) [label={[label distance=-1mm]90:\footnotesize{$18$}}] {};
\node at ($-1*(e3)$) [label={[label distance=-1mm]90:\footnotesize{$19$}}] {};
\node at ($-1*(e2)-(e3)$) [label={[label distance=-1mm]90:\footnotesize{$20$}}] {};
\node at ($-1*(e3)+(e1)$) [label={[label distance=-1mm]90:\footnotesize{$21$}}] {};
\node at ($2*(e1)$) [label={[label distance=-1mm]90:\footnotesize{$22$}}] {};
\node at ($2*(e2)+(e1)$) [label={[label distance=-1mm]90:\footnotesize{$23$}}] {};
\node at ($3*(e2)$) [label={[label distance=-1mm]90:\footnotesize{$24$}}] {};
\node at ($2*(e2)+2*(e3)$) [label={[label distance=-1mm]90:\footnotesize{$25$}}] {};
\node at ($3*(e3)+(e2)$) [label={[label distance=-1mm]90:\footnotesize{$26$}}] {};
\node at ($-1*(e1)+3*(e3)$) [label={[label distance=-1mm]90:\footnotesize{$27$}}] {};
\node at ($-2*(e1)+2*(e3)$) [label={[label distance=-1mm]90:\footnotesize{$28$}}] {};
\node at ($-3*(e1)$) [label={[label distance=-1mm]90:\footnotesize{$29$}}] {};
\node at ($-2*(e1)+-1*(e2)$) [label={[label distance=-1mm]90:\footnotesize{$30$}}] {};
\node at ($-2*(e2)$) [label={[label distance=-1mm]90:\footnotesize{$31$}}] {};
\node at ($-2*(e3)$) [label={[label distance=-1mm]90:\footnotesize{$32$}}] {};
\node at ($2*(e1)+(e2)$) [label={[label distance=-1mm]90:\footnotesize{$33$}}] {};
\node at ($2*(e1)-(e3)$) [label={[label distance=-1mm]90:\footnotesize{$34$}}] {};
\node at ($-2*(e3)+(e1)$) [label={[label distance=-1mm]90:\footnotesize{$35$}}] {};
\node at ($-2*(e3)-1*(e2)$) [label={[label distance=-1mm]90:\footnotesize{$36$}}] {};
\node at ($-2*(e2)-1*(e1)$) [label={[label distance=-1mm]90:\footnotesize{$37$}}] {};
\node at ($-2*(e2)-1*(e3)$) [label={[label distance=-1mm]90:\footnotesize{$38$}}] {};
\node at ($3*(e1)$) [label={[label distance=-1mm]90:\footnotesize{$39$}}] {};
\node at ($2*(e2)+2*(e1)$) [label={[label distance=-1mm]90:\footnotesize{$40$}}] {};
\node at ($3*(e2)+(e1)$) [label={[label distance=-1mm]90:\footnotesize{$41$}}] {};
\node at ($3*(e2)+(e3)$) [label={[label distance=-1mm]90:\footnotesize{$42$}}] {};
\node at ($-3*(e3)$) [label={[label distance=-1mm]90:\footnotesize{$43$}}] {};
\node at ($2*(e1)-2*(e3)$) [label={[label distance=-1mm]90:\footnotesize{$44$}}] {};
\node at ($3*(e1)-(e3)$) [label={[label distance=-1mm]90:\footnotesize{$45$}}] {};
\node at ($3*(e1)+(e2)$) [label={[label distance=-1mm]90:\footnotesize{$46$}}] {};
\node at ($-3*(e3)+(e1)$) [label={[label distance=-1mm]90:\footnotesize{$47$}}] {};
\node at ($-3*(e3)-(e2)$) [label={[label distance=-1mm]90:\footnotesize{$48$}}] {};
\node at ($-3*(e2)$) [label={[label distance=-1mm]90:\footnotesize{$49$}}] {};
\node at ($-2*(e2)-2*(e3)$) [label={[label distance=-1mm]90:\footnotesize{$50$}}] {};
\node at ($-2*(e2)-2*(e1)$) [label={[label distance=-1mm]90:\footnotesize{$51$}}] {};
\node at ($-3*(e1)+(e3)$) [label={[label distance=-1mm]90:\footnotesize{$52$}}] {};
\node at ($-3*(e1)-(e2)$) [label={[label distance=-1mm]90:\footnotesize{$53$}}] {};
\node at ($-3*(e2)-(e1)$) [label={[label distance=-1mm]90:\footnotesize{$54$}}] {};
\node at ($-3*(e2)-(e3)$) [label={[label distance=-1mm]90:\footnotesize{$55$}}] {};
\end{tikzpicture}
    \caption{The first \(55\) terms in the ordering of \(\Lambda\)}
    \label{fig:basecaseextremal}
\end{figure}
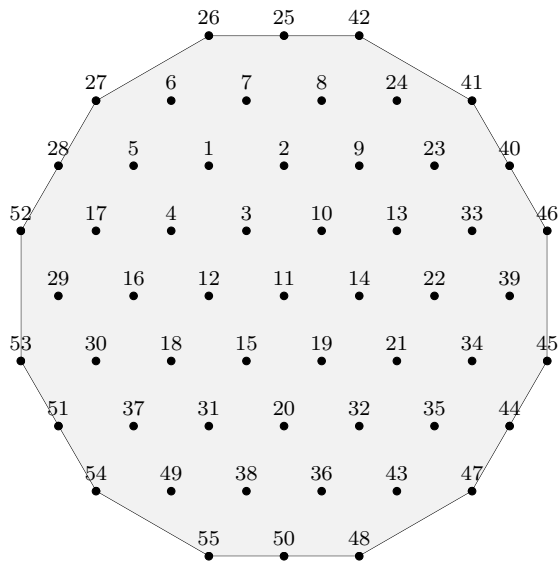

From now on we assume \(k\geq 3\), and give a construction of a way to add vertices starting with the initial \(12\)-gon, that builds up to the terminal \(12\)-gon. The sequence of vertices we add generates a sequence of how many edges we add per vertex. This will have to be a sequence of \(5\)'s and \(6\)'s since \(e(n)-e(n-1)=5,6\) for all \(n\geq 56\). This means that if the graph we have built so far has \(n\) vertices and \(e(n)\) edges, and we add a vertex which creates \(6\) edges, then this (\(n+1\))-vertex graph has at least \(e(n+1)\) edges.
In fact, the graph will have exactly \(e(n+1)\) edges as we have already established the upper bound. So for the sequence of vertices we construct, we only need to justify that the graph has \(e(n)\) many edges when we obtained that graph by adding a vertex with degree~\(5\). 

Note that the ordering we construct must be greedy for it to have \(e(n)\) many edges at each step.
When we add a vertex to a side of a \(12\)-gon, we create \(5\) edges. If we want to add another vertex to this graph, the only way in which we can add \(6\) edges, is by placing this vertex next to the previously added vertex. This continues until the whole side is completely filled.

Thus it is sufficient to find a sequence of sides from the initial \(12\)-gon to fill up in a way that builds up to the terminal \(12\)-gon. The only vertices added that create \(5\) edges are the ones added to a full \(12\)-gon. Let \(f(n)\) be the number of edges in terms of \(n\) for such a \(12\)-gon. It is then sufficient to show that \(f(n)+5\geq e(n+1)\).

If \(f(n)=6n-\sqrt{96n+a}\) then it is enough to show \(a<33\) as
\[
 6n-\sqrt{96n+a}+5\geq \lfloor6(n+1)-\sqrt{96(n+1)-63}\rfloor\]
\[\iff \lceil\sqrt{96n+33}\rceil- \sqrt{96n+a}\geq 1.
\]

Since \(\sqrt{96n+a}\) is an integer, the latter is equivalent to 
\[
\sqrt{96n+33}>\sqrt{96n+a}\iff 33>a.
\]
So if we can find a sequence of \(12\)-gons, starting and ending at the initial and terminal \(12\)-gon, where each term in the sequence has an edge formula in the form \(6n-\sqrt{96n+a}\) where \(a<33\), and each subsequent term adds a complete side from the previous term, then we would have obtained the desired construction. 

When adding a side to a \(12\)-gon, we can add onto a \(t_i\) or a \(u_i\) for some \(i=1,\dots, 6\). When we add onto a \(t_i\) side, we decrease $t_i$ by \(1\) and increase the adjacent \(u_i\) and \(u_{i+1}\) sides by \(1\). When we add onto a \(u_i\) side, we decrease it by \(3\) and increase the adjacent \(t_{i-1}\) and \(t_i\) sides by \(1\). Table~\ref{table} presents such a sequence of sides with confirmation that their edge-number formulas satisfy \(a<33\). (Note that since \(k\geq 3\), all sides in the construction are non-negative.)

We found this sequence of length $48$ presented in Table~\ref{table} by constructing an auxiliary directed graph where each node represents a \(12\)-gon. First, we create a node representing the initial \(12\)-gon, and then we generate the neighbourhood of this node to be all \(12\)-gons obtained from the initial \(12\)-gon with a side added onto it, and continuing in this manner until we reach the terminal \(12\)-gon. We only add \(12\)-gons with parameter \(a<33\) to the auxiliary graph. We then apply a standard Breadth-First Search algorithm to this graph to find a path from the initial to the terminal \(12\)-gon. The auxiliary directed graph has $1152$ vertices and $2550$ edges. The Python implementation can be found attached to this arXiv submission under the file name \texttt{Sequence\_solutions.py}.

\begin{table}[ht]
\centering\small
\resizebox{\columnwidth}{!}{
\begin{tabular}{lllllllllllllc}
  \hline
  \;\;$u_1$&\;\;$u_2$&\;\;$u_3$&\;\;$u_4$&\;\;$u_5$&\;\;$u_6$&\;\;$t_1$&\;\;$t_2$&\;\;$t_3$&\;\;$t_4$&\;\;$t_5$&\;\;$t_6$&\quad\; \(f(n)\) & Side\;\; \\ 
  \hline
  \hline
  $k - 1$&$k - 1$&$k - 1$&$k - 1$&$k - 1$&$k - 1$&$k - 2$&$k - 2$&$k - 2$&$k - 2$&$k - 2$&$k - 2$&$6 n - \sqrt{96 n - 96}$& \\ 
\hline
$k$&$k$&$k - 1$&$k - 1$&$k - 1$&$k - 1$&$k - 3$&$k - 2$&$k - 2$&$k - 2$&$k - 2$&$k - 2$&$6 n - \sqrt{96 n - 47}$&$t_1 $\\ 
\hline
$k$&$k + 1$&$k$&$k - 1$&$k - 1$&$k - 1$&$k - 3$&$k - 3$&$k - 2$&$k - 2$&$k - 2$&$k - 2$&$6 n - \sqrt{96 n + 4}$&$t_2 $\\ 
\hline
$k$&$k - 2$&$k$&$k - 1$&$k - 1$&$k - 1$&$k - 2$&$k - 2$&$k - 2$&$k - 2$&$k - 2$&$k - 2$&$6 n - \sqrt{96 n - 39}$&$u_2 $\\ 
\hline
$k + 1$&$k - 1$&$k$&$k - 1$&$k - 1$&$k - 1$&$k - 3$&$k - 2$&$k - 2$&$k - 2$&$k - 2$&$k - 2$&$6 n - \sqrt{96 n + 16}$&$t_1 $\\ 
\hline
$k - 2$&$k - 1$&$k$&$k - 1$&$k - 1$&$k - 1$&$k - 2$&$k - 2$&$k - 2$&$k - 2$&$k - 2$&$k - 1$&$6 n - \sqrt{96 n - 23}$&$u_1 $\\ 
\hline
$k - 1$&$k - 1$&$k$&$k - 1$&$k - 1$&$k$&$k - 2$&$k - 2$&$k - 2$&$k - 2$&$k - 2$&$k - 2$&$6 n - \sqrt{96 n - 60}$&$t_6 $\\ 
\hline
$k - 1$&$k$&$k + 1$&$k - 1$&$k - 1$&$k$&$k - 2$&$k - 3$&$k - 2$&$k - 2$&$k - 2$&$k - 2$&$6 n - \sqrt{96 n + 1}$&$t_2 $\\ 
\hline
$k - 1$&$k$&$k - 2$&$k - 1$&$k - 1$&$k$&$k - 2$&$k - 2$&$k - 1$&$k - 2$&$k - 2$&$k - 2$&$6 n - \sqrt{96 n - 32}$&$u_3 $\\ 
\hline
$k - 1$&$k$&$k - 1$&$k$&$k - 1$&$k$&$k - 2$&$k - 2$&$k - 2$&$k - 2$&$k - 2$&$k - 2$&$6 n - \sqrt{96 n - 63}$&$t_3 $\\ 
\hline
$k$&$k + 1$&$k - 1$&$k$&$k - 1$&$k$&$k - 3$&$k - 2$&$k - 2$&$k - 2$&$k - 2$&$k - 2$&$6 n - \sqrt{96 n + 4}$&$t_1 $\\ 
\hline
$k$&$k - 2$&$k - 1$&$k$&$k - 1$&$k$&$k - 2$&$k - 1$&$k - 2$&$k - 2$&$k - 2$&$k - 2$&$6 n - \sqrt{96 n - 23}$&$u_2 $\\ 
\hline
$k$&$k - 1$&$k$&$k$&$k - 1$&$k$&$k - 2$&$k - 2$&$k - 2$&$k - 2$&$k - 2$&$k - 2$&$6 n - \sqrt{96 n - 48}$&$t_2 $\\ 
\hline
$k + 1$&$k$&$k$&$k$&$k - 1$&$k$&$k - 3$&$k - 2$&$k - 2$&$k - 2$&$k - 2$&$k - 2$&$6 n - \sqrt{96 n + 25}$&$t_1 $\\ 
\hline
$k - 2$&$k$&$k$&$k$&$k - 1$&$k$&$k - 2$&$k - 2$&$k - 2$&$k - 2$&$k - 2$&$k - 1$&$6 n - \sqrt{96 n + 4}$&$u_1 $\\ 
\hline
$k - 1$&$k$&$k$&$k$&$k - 1$&$k + 1$&$k - 2$&$k - 2$&$k - 2$&$k - 2$&$k - 2$&$k - 2$&$6 n - \sqrt{96 n - 15}$&$t_6 $\\ 
\hline
$k - 1$&$k$&$k$&$k$&$k - 1$&$k - 2$&$k - 2$&$k - 2$&$k - 2$&$k - 2$&$k - 1$&$k - 1$&$6 n - \sqrt{96 n - 32}$&$u_6 $\\ 
\hline
$k - 1$&$k$&$k$&$k$&$k$&$k - 1$&$k - 2$&$k - 2$&$k - 2$&$k - 2$&$k - 2$&$k - 1$&$6 n - \sqrt{96 n - 47}$&$t_5 $\\ 
\hline
$k$&$k$&$k$&$k$&$k$&$k$&$k - 2$&$k - 2$&$k - 2$&$k - 2$&$k - 2$&$k - 2$&$6 n - \sqrt{96 n - 60}$&$t_6 $\\ 
\hline
$k + 1$&$k + 1$&$k$&$k$&$k$&$k$&$k - 3$&$k - 2$&$k - 2$&$k - 2$&$k - 2$&$k - 2$&$6 n - \sqrt{96 n + 25}$&$t_1 $\\ 
\hline
$k - 2$&$k + 1$&$k$&$k$&$k$&$k$&$k - 2$&$k - 2$&$k - 2$&$k - 2$&$k - 2$&$k - 1$&$6 n - \sqrt{96 n + 16}$&$u_1 $\\ 
\hline
$k - 2$&$k - 2$&$k$&$k$&$k$&$k$&$k - 1$&$k - 1$&$k - 2$&$k - 2$&$k - 2$&$k - 1$&$6 n - \sqrt{96 n + 9}$&$u_2 $\\ 
\hline
$k - 2$&$k - 1$&$k + 1$&$k$&$k$&$k$&$k - 1$&$k - 2$&$k - 2$&$k - 2$&$k - 2$&$k - 1$&$6 n - \sqrt{96 n + 4}$&$t_2 $\\ 
\hline
$k - 2$&$k - 1$&$k - 2$&$k$&$k$&$k$&$k - 1$&$k - 1$&$k - 1$&$k - 2$&$k - 2$&$k - 1$&$6 n - \sqrt{96 n + 1}$&$u_3 $\\ 
\hline
$k - 2$&$k - 1$&$k - 1$&$k + 1$&$k$&$k$&$k - 1$&$k - 1$&$k - 2$&$k - 2$&$k - 2$&$k - 1$&$6n- \sqrt{96n}$&$t_3 $\\ 
\hline
$k - 2$&$k - 1$&$k - 1$&$k - 2$&$k$&$k$&$k - 1$&$k - 1$&$k - 1$&$k - 1$&$k - 2$&$k - 1$&$6 n - \sqrt{96 n + 1}$&$u_4 $\\ 
\hline
$k - 2$&$k - 1$&$k - 1$&$k - 1$&$k + 1$&$k$&$k - 1$&$k - 1$&$k - 1$&$k - 2$&$k - 2$&$k - 1$&$6 n - \sqrt{96 n + 4}$&$t_4 $\\ 
\hline
$k - 2$&$k - 1$&$k - 1$&$k - 1$&$k - 2$&$k$&$k - 1$&$k - 1$&$k - 1$&$k - 1$&$k - 1$&$k - 1$&$6 n - \sqrt{96 n + 9}$&$u_5 $\\ 
\hline
$k - 2$&$k - 1$&$k - 1$&$k - 1$&$k - 1$&$k + 1$&$k - 1$&$k - 1$&$k - 1$&$k - 1$&$k - 2$&$k - 1$&$6 n - \sqrt{96 n + 16}$&$t_5 $\\ 
\hline
$k - 2$&$k - 1$&$k - 1$&$k - 1$&$k - 1$&$k - 2$&$k - 1$&$k - 1$&$k - 1$&$k - 1$&$k - 1$&$k$&$6 n - \sqrt{96 n + 25}$&$u_6 $\\ 
\hline
$k - 1$&$k - 1$&$k - 1$&$k - 1$&$k - 1$&$k - 1$&$k - 1$&$k - 1$&$k - 1$&$k - 1$&$k - 1$&$k - 1$&$6 n - \sqrt{96 n - 60}$&$t_6 $\\ 
\hline
$k$&$k$&$k - 1$&$k - 1$&$k - 1$&$k - 1$&$k - 2$&$k - 1$&$k - 1$&$k - 1$&$k - 1$&$k - 1$&$6 n - \sqrt{96 n - 47}$&$t_1 $\\ 
\hline
$k$&$k + 1$&$k$&$k - 1$&$k - 1$&$k - 1$&$k - 2$&$k - 2$&$k - 1$&$k - 1$&$k - 1$&$k - 1$&$6 n - \sqrt{96 n - 32}$&$t_2 $\\ 
\hline
$k$&$k - 2$&$k$&$k - 1$&$k - 1$&$k - 1$&$k - 1$&$k - 1$&$k - 1$&$k - 1$&$k - 1$&$k - 1$&$6 n - \sqrt{96 n - 15}$&$u_2 $\\ 
\hline
$k + 1$&$k - 1$&$k$&$k - 1$&$k - 1$&$k - 1$&$k - 2$&$k - 1$&$k - 1$&$k - 1$&$k - 1$&$k - 1$&$6 n - \sqrt{96 n + 4}$&$t_1 $\\ 
\hline
$k - 2$&$k - 1$&$k$&$k - 1$&$k - 1$&$k - 1$&$k - 1$&$k - 1$&$k - 1$&$k - 1$&$k - 1$&$k$&$6 n - \sqrt{96 n + 25}$&$u_1 $\\ 
\hline
$k - 1$&$k - 1$&$k$&$k - 1$&$k - 1$&$k$&$k - 1$&$k - 1$&$k - 1$&$k - 1$&$k - 1$&$k - 1$&$6 n - \sqrt{96 n - 48}$&$t_6 $\\ 
\hline
$k - 1$&$k$&$k + 1$&$k - 1$&$k - 1$&$k$&$k - 1$&$k - 2$&$k - 1$&$k - 1$&$k - 1$&$k - 1$&$6 n - \sqrt{96 n - 23}$&$t_2 $\\ 
\hline
$k - 1$&$k$&$k - 2$&$k - 1$&$k - 1$&$k$&$k - 1$&$k - 1$&$k$&$k - 1$&$k - 1$&$k - 1$&$6 n - \sqrt{96 n + 4}$&$u_3 $\\ 
\hline
$k - 1$&$k$&$k - 1$&$k$&$k - 1$&$k$&$k - 1$&$k - 1$&$k - 1$&$k - 1$&$k - 1$&$k - 1$&$6 n - \sqrt{96 n - 63}$&$t_3 $\\ 
\hline
$k$&$k + 1$&$k - 1$&$k$&$k - 1$&$k$&$k - 2$&$k - 1$&$k - 1$&$k - 1$&$k - 1$&$k - 1$&$6 n - \sqrt{96 n - 32}$&$t_1 $\\ 
\hline
$k$&$k - 2$&$k - 1$&$k$&$k - 1$&$k$&$k - 1$&$k$&$k - 1$&$k - 1$&$k - 1$&$k - 1$&$6 n - \sqrt{96 n + 1}$&$u_2 $\\ 
\hline
$k$&$k - 1$&$k$&$k$&$k - 1$&$k$&$k - 1$&$k - 1$&$k - 1$&$k - 1$&$k - 1$&$k - 1$&$6 n - \sqrt{96 n - 60}$&$t_2 $\\ 
\hline
$k + 1$&$k$&$k$&$k$&$k - 1$&$k$&$k - 2$&$k - 1$&$k - 1$&$k - 1$&$k - 1$&$k - 1$&$6 n - \sqrt{96 n - 23}$&$t_1 $\\ 
\hline
$k - 2$&$k$&$k$&$k$&$k - 1$&$k$&$k - 1$&$k - 1$&$k - 1$&$k - 1$&$k - 1$&$k$&$6 n - \sqrt{96 n + 16}$&$u_1 $\\ 
\hline
$k - 1$&$k$&$k$&$k$&$k - 1$&$k + 1$&$k - 1$&$k - 1$&$k - 1$&$k - 1$&$k - 1$&$k - 1$&$6 n - \sqrt{96 n - 39}$&$t_6 $\\ 
\hline
$k - 1$&$k$&$k$&$k$&$k - 1$&$k - 2$&$k - 1$&$k - 1$&$k - 1$&$k - 1$&$k$&$k$&$6 n - \sqrt{96 n + 4}$&$u_6 $\\ 
\hline
$k - 1$&$k$&$k$&$k$&$k$&$k - 1$&$k - 1$&$k - 1$&$k - 1$&$k - 1$&$k - 1$&$k$&$6 n - \sqrt{96 n - 47}$&$t_5 $\\ 
\hline
$k$&$k$&$k$&$k$&$k$&$k$&$k - 1$&$k - 1$&$k - 1$&$k - 1$&$k - 1$&$k - 1$&$6 n - \sqrt{96 n - 96}$&$t_6 $\\ 
\hline
\end{tabular}
}

\caption{Building up to the next \(12\)-gon}\label{table}
\end{table}

\section{Conclusion}
We presented two examples related to the question of Barber and Erde \cite{BE2018} of whether there is always a nested sequence of optimal solutions in a Cayley graph $\mathbb{Z}^d_U$.

Our first example (Theorem~\ref{the:e}) shows that $d=1$ is special in the sense that the positive result of Briggs and Wells \cite{BW2024} cannot be extended to higher dimensions.
In this example the generating set has vectors that are positive multiples of each other. It would be interesting to find an example where this does not happen.

Our second example (Theorem~\ref{the:P}) is a positive result in dimension~$2$, and is proved using some computation.
The proof method has some potential to be generalized to other special cases, and it would be worth exploring this direction.
It seems that many more examples, both positive and negative, will be needed to understand for which $U$ there exists a nested sequence.

\section*{Acknowledgment}
We thank Peter Allen for his insightful suggestions.

\bibliographystyle{plain}
\bibliography{main}
\end{document}